\newif\ifobsolete
\newcommand*\patchAmsMathEnvironmentForLineno[1]{%
  \expandafter\let\csname old#1\expandafter\endcsname\csname #1\endcsname
  \expandafter\let\csname oldend#1\expandafter\endcsname\csname end#1\endcsname
  \renewenvironment{#1}%
     {\linenomath\csname old#1\endcsname}%
     {\csname oldend#1\endcsname\endlinenomath}}%
\newcommand*\patchBothAmsMathEnvironmentsForLineno[1]{%
  \patchAmsMathEnvironmentForLineno{#1}%
  \patchAmsMathEnvironmentForLineno{#1*}}%
\definecolor{modra3}{rgb}{.1,.0,.4}
\theoremstyle{plain}
\newtheorem{theorem}{Theorem}
\newtheorem{otheorem}{Obsolete theorem}
\newtheorem{claim}[theorem]{Claim}
\newtheorem{lemma}[theorem]{Lemma}
\newtheorem{olemma}[theorem]{Obsolete lemma}
\newtheorem{question}[theorem]{Question}
\newtheorem{proposition}[theorem]{Proposition}
\newtheorem{oproposition}[theorem]{Obsolete proposition}
\newtheorem{corollary}[theorem]{Corollary}
\newtheorem{observation}[theorem]{Observation}
\theoremstyle{definition}
\theoremstyle{remark}
\newtheorem*{remark}{Remark}
\DeclareMathOperator{\conv}{conv}
\DeclareMathOperator{\aff}{aff}
\DeclareMathOperator{\interior}{int}
\DeclareMathOperator{\Vol}{Vol}
\DeclareMathOperator{\sa}{sa}
\DeclareMathOperator{\msa}{msa}
\DeclareMathOperator{\supp}{supp}
\DeclareMathOperator{\dist}{dist}
\newcommand{\R}{\mathbb{R}}
\newcommand{\N}{\mathbb{N}}
\newcommand{\mbf}[1]{{\mathbf{#1}}}
\newcommand{\F}{\mathcal{F}}
\renewcommand{\aa}{\mbf{a}}
\newcommand{\bb}{\mbf{b}}
\newcommand{\hh}{\mbf{h}}
\newcommand{\pp}{\mbf{p}}
\newcommand{\rr}{\mbf{r}}
\newcommand{\uu}{\mbf{u}}
\newcommand{\vv}{\mbf{v}}
\newcommand{\ww}{\mbf{w}}
\newcommand{\xx}{\mbf{x}}
\newcommand{\yy}{\mbf{y}}
\newcommand{\zz}{\mbf{z}}
\newcommand{\OO}{\mbf{0}}
\newcommand{\CC}{C}
\renewcommand{\H}{\mathcal{H}}
\newcommand{\Q}{\mathcal{Q}}
\newcommand{\gray}[1]{{\color{gray}\bf #1}}
\definecolor{seda}{rgb}{.7,.7,.7}
\newcounter{sideremark}
\newcommand{\csup}{c^{\sup}}
\begin{document}

\title{Bounds for Pach's selection theorem and for the minimum solid angle in a
simplex}


\author[1]{Roman Karasev}
\author[2, 3, 4]{Jan Kyn\v{c}l}
\author[5]{Pavel Pat\'{a}k}
\author[2]{Zuzana Pat\'{a}kov\'{a}}
\author[2, 6]{Martin Tancer}

\affil[1]{\small Moscow Institute of Physics and Technology, Institutskiy
per.~9, Dolgoprudny,
Russia~141700 and Institute for Information Transmission Problems RAS, Bolshoy
Karetny per.~19, Moscow, Russia~127994}
\affil[2]{\small Department of Applied Mathematics and Institute for
Theoretical Computer
Science,
Charles University,
Malostransk\'e n\'am.~25, 118~00~~Praha~1, Czech Republic}
\affil[3]{\small Alfr\'ed R\'enyi Institute of Mathematics, Re\'altanoda u.
13-15, Budapest 1053, Hungary}
\affil[4]{\small \'Ecole Polytechnique F\'ed\'erale de Lausanne, Chair of
Combinatorial Geometry,
EPFL-SB-MATHGEOM-DCG, Station 8, CH-1015 Lausanne, Switzerland}
\affil[5]{\small Department of Algebra, Charles University,
Sokolovsk\'a 83, 186~75~~Praha~8, Czech Republic}
\affil[6]{\small IST Austria, Am Campus~1, 3400~Klosterneuburg, Austria
}

\maketitle

\begin{abstract}
We estimate the selection constant in the following geometric
selection theorem by Pach: For every positive integer $d$ there is a constant
$c_d > 0$ such that whenever $X_1, \dots, X_{d+1}$ are $n$-element subsets  of
$\R^d$, then we can find a point $\pp \in \R^d$ and subsets $Y_i \subseteq X_i$
for every $i \in [d+1]$, each of size at least $c_d n$, such that $\pp$ belongs
to all {\em rainbow\/} $d$-simplices determined by $Y_1, \dots, Y_{d+1}$, that
is, simplices with one vertex in each $Y_i$.

We show a super-exponentially decreasing upper bound
$c_d\leq e^{-(1/2-o(1))(d \ln d)}$.
The ideas used in the proof of the upper bound also help us prove 
Pach's theorem with $c_d \geq 2^{-2^{d^2 + O(d)}}$, which is a lower bound doubly
exponentially decreasing in $d$ (up to some polynomial in
the exponent). For comparison, Pach's original approach yields a triply
exponentially decreasing lower bound. On the other hand, Fox, Pach, and Suk recently
obtained a hypergraph density result implying a proof of Pach's theorem with
$c_d \geq2^{-O(d^2\log d)}$.

In our construction for
the upper bound, we use the fact that the minimum solid angle of every
$d$-simplex is super-exponentially small. This fact was previously unknown and might
be of independent interest.
For the lower bound, we improve the `separation' part of the argument by
showing that in one of the key steps only $d+1$ separations are necessary,
compared to $2^d$ separations in the original proof. 

We also provide a measure version of Pach's theorem.
\end{abstract}

\section{Introduction}
Selection theorems have attracted a lot of interest in discrete
geometry. We focus on the positive fraction selection theorem by
Pach~\cite{pach98}. For a more compact statement, we first introduce the following terminology. Let $S_1, \dots, S_{d+1}$ be subsets of $\R^d$. By
an \emph{$(S_1, \dots, S_{d+1})$-simplex} we mean the convex hull of points
$s_1, \dots, s_{d+1}$ where $s_i \in S_i$ for $i \in [d+1]$. Note that an
$(S_1, \dots, S_{d+1})$-simplex might be degenerate if the points $s_i$ are
  not in general position. Figure~\ref{f:pach_thm} illustrates the
  statement of the theorem.

  \begin{theorem}[Pach~\cite{pach98}]
\label{t:pach}
  For every positive integer $d$, there exists a constant $c_d > 0$ with the following
  property. Let $X_1, \dots, X_{d+1}$ be $n$-element subsets of $\R^d$. 
  Then there exist a
  point $\pp \in \R^d$ and subsets $Y_i \subseteq X_i$ for $i \in [d+1]$ each
  of them of size at least $c_d n$ such that the point
  $\pp$ belongs to all $(Y_1, \dots, Y_{d+1})$-simplices.
\end{theorem}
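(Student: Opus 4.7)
I would prove Theorem~\ref{t:pach} in three stages: (i)~locate a candidate point $\pp$ pierced by a positive fraction of rainbow simplices; (ii)~classify those simplices by a discrete combinatorial type and pigeon-hole to a large homogeneous sub-family; (iii)~apply a separation argument on each color class so that \emph{every} rainbow simplex of the refined classes contains $\pp$.

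For stage (i), I would apply a colorful version of the First Selection Lemma (in the style of B\'ar\'any or Pach himself). It guarantees a point $\pp\in\R^d$ that is contained in at least $\alpha_d\cdot n^{d+1}$ rainbow $(X_1,\dots,X_{d+1})$-simplices for some constant $\alpha_d>0$ depending only on $d$. After translating, assume $\pp=\OO$.

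For stage (ii), I use the classical halfspace characterization: $\OO\in\conv(s_1,\dots,s_{d+1})$ if and only if no open halfspace bounded by a hyperplane through $\OO$ contains all the $s_i$. This lets me attach to each rainbow simplex $\sigma$ through $\OO$ a combinatorial certificate, encoded either by the sign pattern of $\langle s_i,\uu\rangle$ on a finite family of test directions or by the oriented arrangement of the hyperplanes $\aff\{s_j:j\neq i\}$. Pigeon-hole on these labels, applied one color class at a time, yields sub-families of the $X_i$ of size $\Omega(n/N_d)$ where $N_d$ is the number of achievable types.

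For stage (iii), taking $Y_i\subseteq X_i$ to be a suitable homogeneous sub-family, I want each $Y_i$ to sit inside a single halfspace of some hyperplane $H_i$ through $\OO$, with the $H_i$ jointly \emph{compatible}, in the sense that for every direction $\uu\in S^{d-1}$ there exists some $i\in[d+1]$ with $\langle\uu,y\rangle\le 0$ for all $y\in Y_i$. Combined with the halfspace characterization from stage (ii), this forces $\OO\in\conv(y_1,\dots,y_{d+1})$ for every rainbow selection, as required. The plan is to realize the hyperplanes $H_i$ by iterated ham-sandwich-type bisections, each of which separates many of the remaining points of one color class from those of the others without destroying the homogeneous structure already obtained.

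The main obstacle, and the place where all the quantitative loss in the abstract's bound $c_d\ge 2^{-2^{d^2+O(d)}}$ is concentrated, is stage (iii). Each successive separation can shrink the surviving subset of a color class by a factor proportional to the number of possible halfspace signatures, and the naive Pach bookkeeping needs $2^d$ separations per color class, producing the triply-exponential bound of the original proof. The key technical challenge is to show that $d+1$ carefully chosen separations (one per color class) suffice, rather than $2^d$, by exploiting that we only need the resulting halfspaces to be \emph{jointly} compatible, not pairwise independent---this is exactly the improvement announced in the abstract, and it is where I expect the bulk of the work to lie.
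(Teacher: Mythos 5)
Your stages~(i) and~(iii) track the paper's actual proof fairly well: stage~(i) corresponds to invoking the colorful selection theorem (Theorem~\ref{t:karasev}) to obtain a point $\pp$ in $\Omega(n^{d+1})$ rainbow simplices, and stage~(iii) correctly identifies the key improvement over Pach's original argument, namely that $d+1$ ham-sandwich separations suffice (Lemma~\ref{l:weak-sep} together with Lemmas~\ref{l:separate} and~\ref{l:in_simplex}) because one only needs the separating hyperplanes to be \emph{jointly} consistent, which is exactly what the separation lemma exploits. However, your stage~(ii) has a genuine gap, and it is not a cosmetic one.

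The role played by stage~(ii) in the paper is not a pigeonhole on combinatorial types of simplices but an application of the weak hypergraph regularity lemma (Theorem~\ref{t:weak_reg_lemma}), and the property it delivers is essential and not replaceable by a pigeonhole argument. The crucial output of the regularity lemma is the ``edges on large subsets'' property: the sets $S_i$ it produces have the feature that \emph{every} sub-tuple $(Y_1,\dots,Y_{d+1})$ with $|Y_i|\ge \varepsilon s$ still contains at least one rainbow simplex through $\pp$. This matters because the separation step of stage~(iii) only gives you an ``all-or-none'' dichotomy: after the ham-sandwich bisections, $\pp$ either lies in \emph{all} $(Y_1,\dots,Y_{d+1})$-simplices or in \emph{none} of them. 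Nothing in the separation construction forces the first alternative. It is precisely the ``edges on large subsets'' guarantee that rules out the ``none'' case, because the $Y_i$ produced by the bisections are still large enough. A pigeonhole over halfspace sign patterns or over orientations of $\aff\{s_j : j\neq i\}$ gives you a large family of simplices with a common type, but not the much stronger statement that every large sub-rectangle of the $S_i$'s still carries an edge; without that, there is no reason to prefer ``all'' over ``none,'' and the proof collapses. (Also, as a smaller point, you cannot pigeonhole ``one color class at a time'' over a type that is intrinsically a joint property of a $(d+1)$-tuple; the bookkeeping there does not parse.) So the missing ingredient in your plan is precisely the hypergraph regularity lemma, and the bridge from stage~(ii) to stage~(iii) is the observation that ``all-or-none'' plus ``at least one'' gives ``all.''
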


For a fixed $d$, we denote by $\csup_d$ the supremum of the
constants with which the theorem remains valid and we call this value
\emph{Pach's} (\emph{selection}) \emph{constant}.\footnote{Although we are
interested in the dependence of $\csup_d$ on $d$, we call it a constant emphasizing
its independence on the size of the sets $X_i$.} We do not need this
fact but it is not hard to verify that the supremum coincides with the maximum in
this case, using the finiteness of the sets $X_i$. Our aim is to estimate 
$\csup_d$. Although Pach's proof of Theorem~\ref{t:pach} is nice and
elegant, it uses several advanced tools: a weaker selection theorem, the weak
hypergraph regularity lemma, and the same-type lemma. These tools yield
a lower bound on $\csup_d$, which is roughly triply exponentially decreasing in $d$.

The goal of this paper is to establish tighter bounds on $\csup_d$. We will
show a super-exponentially decreasing upper bound
on $\csup_d$. The idea for the construction for the upper bound is relatively
straightforward. We just place the points of the sets $X_1, \dots, X_{d+1}$
uniformly in the unit ball. The analysis of this construction requires
two important ingredients. One ingredient is the analysis of the regions where 
the sets $Y_i$ from Theorem~\ref{t:pach} can appear. Using a certain separation
lemma (see Lemma~\ref{l:separate}) we can deduce that they appear in ``corner
regions'' of arrangements of $d+1$ hyperplanes. The second ingredient is an upper bound on the minimum \emph{solid} angle in a
simplex. This bound helps us to bound the sizes of the corner regions for
$Y_i$. We could not find any bound on the minimum solid angle in a simplex in
the literature. We provide a super-exponentially decreasing upper bound, which might be of independent interest.

The description of the corner regions and Lemma~\ref{l:separate} also allow us 
to obtain a doubly exponentially decreasing lower bound on $\csup_d$. More
concretely, we will show that $\csup_d \geq 2^{-2^{d^2 + O(d)}}$. Shortly
before making a preprint version of this paper publicly available, we have learned that
Fox, Pach, and Suk expected to obtain an impressive lower bound $\csup_d \geq 2^{-O(d^3\log
d)}$. Later, they improved the lower bound to $\csup_d \geq 2^{-O(d^2\log d)}$~\cite{FPS15_semialgebraic}. 

\begin{figure}
\begin{center}
  \includegraphics{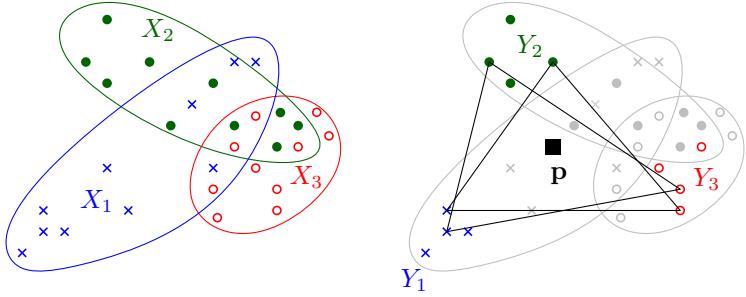}
\end{center}
\caption{Pach's theorem: initial configuration (left) and the resulting sets
$Y_i$ and the resulting point $\pp$ (right)}
\label{f:pach_thm}
\end{figure}

\begin{theorem}
\label{t:bounds}
 Pach's selection constant can be bounded as follows.
\begin{enumerate}
  \item[{\rm (1)}]
    $\csup_d \leq e^{-(1/2 - o(1)) d \ln d}$
    and
  \item[{\rm (2)}]
    $\csup_d \geq 2^{-2^{d^2 + 3d}}$.
\end{enumerate}
\end{theorem}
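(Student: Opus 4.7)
The theorem has two parts that require essentially independent arguments. For the upper bound (1), the plan is to exhibit an explicit construction of the sets $X_i$ and use geometric estimates to show that no large $Y_i$ can satisfy the hypothesis. For the lower bound (2), the plan is to refine Pach's original argument by substituting the improved separation lemma.

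For part (1), I would place $n$ points uniformly at random in the unit ball $B^d$ and let $X_1 = \dots = X_{d+1}$ be this common configuration. Suppose $\pp \in \R^d$ is any point and $Y_i \subseteq X_i$ are subsets, each of size at least $c_d n$, such that $\pp$ belongs to every rainbow $(Y_1,\dots,Y_{d+1})$-simplex. Applying the separation lemma (Lemma~\ref{l:separate}), this condition forces the set of direction vectors from $\pp$ to points of $Y_i$ to lie in ``corner cones'' at $\pp$ whose angular measures correspond to the vertex solid angles of a $d$-simplex. Invoking the super-exponential upper bound on the minimum solid angle in a $d$-simplex (proved separately in the paper as a standalone result of independent interest), at least one of these cones has solid angle at most $e^{-(1/2-o(1))d\ln d}$, and since points are uniformly distributed in the ball, the fraction of points falling into it has the same order. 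Thus $c_d \leq e^{-(1/2-o(1))d\ln d}$. To make this rigorous I would take a union bound over the discretised combinatorial types of $\pp$ and of the separating hyperplanes (polynomially many in $n$), combined with standard concentration inequalities. The main obstacle here is the minimum solid angle bound itself, which is handled separately.

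For part (2), I would follow Pach's refinement scheme but substitute the improved separation lemma (Lemma~\ref{l:separate}) that requires only $d+1$ separations instead of $2^d$. Concretely: (a) invoke a first-selection-type statement to locate a point $\pp$ contained in a positive fraction of rainbow simplices from the original sets; (b) apply a same-type lemma to bring the configurations into order-type homogeneous position relative to $\pp$, at the cost of shrinking each $X_i$ to a subset of relative size $2^{-2^{d^2 + O(d)}}$; (c) iteratively apply Lemma~\ref{l:separate} $d+1$ times to ensure $\pp$ lies in \emph{every} rainbow simplex of the final sets, losing only a constant fraction at each of these $d+1$ rounds. The quantitative saving compared to Pach's original proof is precisely the replacement of the factor $2^d$ by $d+1$ in the separation step, which shaves one level of exponentiation from the final bound. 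The main obstacle is the tight bookkeeping required to verify that the accumulated losses in the exponent indeed come out to $d^2 + 3d$; the dominant contribution comes from the same-type lemma, and the improved separation keeps additional losses within lower-order terms.
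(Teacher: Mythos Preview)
For part~(1) your outline is in the right spirit, but two steps are glossed over. Lemma~\ref{l:separate} does not itself produce separating hyperplanes from a Pach configuration; it takes them as input. You need a separate argument (Lemma~\ref{lemma_generic_strict_separation} in the paper) showing that in a \emph{generic} Pach configuration $\pp$ can be strictly separated from each $\widehat Y_i$; this is where the corner-cone structure actually originates. Also, the solid angle of a corner cone is measured at its apex, not at the center of $B^d$, so ``the fraction of points falling into it has the same order'' requires justification---the paper encloses $B^d$ in $B(\pp,2)$ to absorb this as a harmless $2^d$ factor (Proposition~\ref{p:in_tiny_part}). The paper also uses a deterministic fine grid rather than random points with union bounds and concentration, which is cleaner and avoids the probabilistic overhead entirely.

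For part~(2) there is a genuine gap: the paper does \emph{not} use the same-type lemma, and your step~(b) is not where the double exponential comes from. The actual scheme is: (a) first selection (Theorem~\ref{t:karasev}) gives $\pp$ in a $\beta \approx 2^{-d^2}$ fraction of rainbow simplices; (b) the \emph{weak hypergraph regularity lemma} with parameter $\varepsilon = 2^{-d}$ yields sets $S_i$ of relative size $\beta^{1/\varepsilon^{d+1}} = 2^{-d^2\cdot 2^{d(d+1)}}$---this is the source of the $2^{-2^{d^2+3d}}$; (c) the ``few separations'' Lemma~\ref{l:weak-sep}, which performs $d+1$ ham-sandwich bisections (total loss $2^{-d}$, chosen to match $\varepsilon$) and then invokes Lemma~\ref{l:separate} \emph{once} together with Lemma~\ref{l:in_simplex} to force the all-or-nothing dichotomy, after which regularity rules out ``nothing''. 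Lemma~\ref{l:separate} is not iterated, and the same-type lemma is precisely the tool being avoided; substituting it back in as you propose would return you to Pach's original triply-exponential bound rather than the claimed doubly-exponential one.
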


\ifobsolete
\gray{
\begin{otheorem}
\label{t:bounds_obsolete}
 Pach's selection constant can be bounded as follows.
\begin{enumerate}
  \item[{\rm (1)}]
    $\csup_d \leq \kappa^d$ where $\kappa < 1$; (in our proof we can achieve $\kappa < 0.9975$)
    and
  \item[{\rm (2)}]
    $\csup_d \geq 2^{-2^{d^2 + 3d}}$.
\end{enumerate}
\end{otheorem}
}
\fi
The minimum solid angle of a simplex is discussed in Section~\ref{s:msa}.
Section~\ref{s:corner} contains the description of the corner regions and the
separation lemma (Lemma~\ref{l:separate}) we need. Section~\ref{s:ub}
contains the proof of Theorem~\ref{t:bounds}(1) and Section~\ref{s:lb} contains the
proof of Theorem~\ref{t:bounds}(2).

\subsubsection*{Other selection theorems}

The following weaker selection theorem is related to the positive fraction
selection theorem of Pach. By \emph{general position} in $\R^d$ we mean that 
each set of at most $d+1$ points is affinely independent;
the general position assumption in the theorem below is not crucial but we choose the simplest statement in this
case.
\begin{theorem}
\label{t:barany}
For every $d \in \N$, there is a constant $k_d > 0$ with the following property.
  Let $P$ be a set of $n$ points in general position in $\R^d$. Then there is a
  point in at least $k_d \cdot \binom n{d+1} - O(n^d)$ $d$-simplices spanned by $P$.
\end{theorem}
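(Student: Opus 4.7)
The plan is to prove Theorem~\ref{t:barany}---the classical First Selection Lemma of B\'ar\'any---by combining Tverberg's theorem with the colored Carath\'eodory theorem. The whole argument fits into three short steps and yields the explicit bound $k_d \ge (d+1)^{-(d+1)}$.

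First I would set $r = \lceil n/(d+1) \rceil$ and apply Tverberg's theorem. Since $(d+1)(r-1)+1 \le n$, this partitions $P$ into pairwise disjoint classes $T_1, \dots, T_r$ such that a single point $\qqq \in \R^d$ lies in $\conv(T_i)$ for every $i \in \{1,\dots,r\}$.

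Second, for each $(d+1)$-element index set $\{i_1 < \cdots < i_{d+1}\} \subseteq \{1,\dots,r\}$ I would invoke the colored Carath\'eodory theorem on the color classes $T_{i_1}, \dots, T_{i_{d+1}}$: since $\qqq \in \conv(T_{i_j})$ for every $j$, there exist vertices $p_j \in T_{i_j}$ with $\qqq \in \conv(p_1, \dots, p_{d+1})$. Because the $T_i$'s are pairwise disjoint, different index sets produce different rainbow simplices, so $\qqq$ lies in at least $\binom{r}{d+1}$ of the $d$-simplices spanned by~$P$. The general position assumption guarantees that these are genuine nondegenerate $d$-simplices.

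Finally, I would compare the counts:
\[
  \binom{r}{d+1} \;=\; \frac{1}{(d+1)^{d+1}}\binom{n}{d+1} \,-\, O(n^d),
\]
where the $O(n^d)$ error absorbs both the rounding in $r = \lceil n/(d+1) \rceil$ and the lower-order terms of the falling factorial. This gives the theorem with $k_d \ge (d+1)^{-(d+1)}$. The only ingredient whose proof is not immediate is the colored Carath\'eodory theorem, which I would treat as the main technical obstacle; however, it admits a short distance-minimizing proof (take a rainbow simplex whose convex hull minimizes the distance to $\qqq$, and argue that an improving one-vertex swap is always available unless $\qqq$ already lies inside it).
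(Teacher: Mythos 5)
Your proof is correct, and the approach---Tverberg's theorem plus the colored Carath\'eodory theorem---is exactly B\'ar\'any's original argument from~\cite{barany82}, which this paper cites for Theorem~\ref{t:barany} but does not reproduce. Each step checks out: $r = \lceil n/(d+1)\rceil$ satisfies $(d+1)(r-1)+1 \le n$ so Tverberg applies; the colored Carath\'eodory theorem produces a rainbow simplex containing $\qqq$ for every $(d+1)$-subset of the Tverberg parts; disjointness of the parts makes these simplices distinct; general position makes them nondegenerate; and $\binom{r}{d+1} \ge \frac{1}{(d+1)^{d+1}}\binom{n}{d+1} - O(n^d)$ is a routine asymptotic. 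The paper itself instead points out that Theorem~\ref{t:barany} can also be read off from Theorem~\ref{t:pach} by splitting $P$ into $d+1$ equal-size color classes (discarding $O(1)$ points); that derivation is logically available here but yields a far worse constant, roughly $c_d^{\,d+1}(d+1)!/(d+1)^{d+1}$ with $c_d$ the (super-exponentially small) Pach constant, so your self-contained route is both more elementary and quantitatively much stronger. One small quibble: the paper attributes to B\'ar\'any the value $k_d = \frac{1}{(d+1)^d}$, whereas the computation you carry out gives $k_d = \frac{1}{(d+1)^{d+1}}$; your arithmetic is the one that follows from the Tverberg/colored-Carath\'eodory count with $r=\lceil n/(d+1)\rceil$, and the discrepancy is only a bookkeeping matter of how the result is normalized, not a flaw in your argument.
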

Note that $\binom n{d+1}$ is the number of all $d$-simplices spanned by $P$,
thus the statement of Theorem~\ref{t:barany} says that we can indeed select a
positive fraction of simplices sharing a point.  It is not hard to see that
Theorem~\ref{t:barany} follows from Theorem~\ref{t:pach} as soon as only the
existence of $k_d$ is concerned (by splitting $P$ into $X_1, \dots, X_{d+1}$,
possibly forgetting few points).

The planar case of Theorem~\ref{t:barany} is due to Boros and
F\"{u}redi~\cite{boros-furedi84} ($d = 2$); it was extended to arbitrary
dimension by B\'ar\'any~\cite{barany82}.
B\'ar\'any proved the theorem with $k_d = \frac{1}{(d+1)^d}$.

A significant improvement to $k_d$ was found by Gromov~\cite{gromov10}
using topological methods in a much more general setting (obtaining a proof with $k_d = \frac 1{(d+1)!}$).
The first author~\cite{karasev12} found a simpler proof (still in quite general setting) and Matou\v{s}ek and
Wagner~\cite{matousek-wagner14} extracted the combinatorial essence of
Gromov's proof allowing them to get a further (slight) improvement on $k_d$.
Kr\'a\soft{l}, Mach and Sereni~\cite{kral-mach-sereni12} obtained a further
improvement of the value focusing on the combinatorial part extracted by
Matou\v{s}ek and Wagner. We do not attempt to enumerate the bounds obtained
in~\cite{matousek-wagner14,kral-mach-sereni12}.

The following variant of Theorem~\ref{t:barany} for rainbow simplices is an
important step in the proof of Theorem~\ref{t:pach}.

\begin{theorem}
\label{t:karasev}
For every $d \in \N$, there is a constant $k'_d > 0$ with the following property.
Let $X_1, \dots, X_{d+1}$ be pairwise disjoint $n$-element subsets of $\R^d$
  whose union is in general position. Then there is a point $\pp \in \R^d$ which is contained in the interior of
  at least $k'_d \cdot n^{d+1} -
  O(n^d)$ rainbow $d$-simplices, where a rainbow simplex meets each $X_i$ in
  exactly one vertex and $k'_d > 0$ is a constant depending only on $d$.
\end{theorem}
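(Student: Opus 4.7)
The plan is to prove this as the colorful (rainbow) analogue of the first selection lemma, by combining a colorful centerpoint with a B\'ar\'any-style counting argument.

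First I would produce a point $\pp \in \R^d$ and a constant $\alpha = \alpha_d > 0$ such that every closed halfspace containing $\pp$ meets each $X_i$ in at least $\alpha n$ points. This is a colorful centerpoint statement: one route is to apply the classical centerpoint theorem to each $X_i$ separately and then use a Helly-type intersection argument in the space of admissible points; another is to apply Theorem~\ref{t:barany} to the union $\bigcup_i X_i$ and upgrade the resulting point by a pigeonhole argument on colors. In either case, $\pp$ is deep with respect to each color class simultaneously, so in particular $\pp \in \conv(X_i)$ for every $i$.

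Next, I would count rainbow simplices containing $\pp$: an ordered tuple $(x_1,\dots,x_{d+1}) \in X_1 \times \cdots \times X_{d+1}$ contains $\pp$ in the interior of its convex hull if and only if, for every open halfspace $H$ with $\pp \in \partial H$, not all $x_i$ lie in $H$. Using the deep position of $\pp$, a B\'ar\'any-style argument (either a direct pairing across halfspaces through $\pp$, or repeated use of colorful Carath\'eodory on subsets $Y_i \subseteq X_i$ with $|X_i \setminus Y_i| < \alpha n$, for which $\pp \in \bigcap_i \conv(Y_i)$ holds automatically) shows that a positive fraction $k'_d$ of the $n^{d+1}$ ordered rainbow tuples satisfy this. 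The general position assumption guarantees that all but $O(n^d)$ of these rainbow simplices contain $\pp$ strictly in their interior, since simplices with $\pp$ on a proper face correspond to rainbow sub-tuples spanning a flat through $\pp$, and there are only $O(n^d)$ such configurations.

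The main obstacle is securing a colorful centerpoint in the first step with a usable constant $\alpha_d$: plain existence via Helly is elementary but may yield weak constants, whereas sharper quantitative versions follow from the topological overlap results of Gromov~\cite{gromov10}, Karasev~\cite{karasev12}, and Matou\v{s}ek--Wagner~\cite{matousek-wagner14}. Once $\pp$ is in hand, the counting step and the degenerate-tuple cleanup are routine.
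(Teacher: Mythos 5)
Your first step is where the argument breaks. The ``colorful centerpoint'' you posit---a point $\pp$ and a constant $\alpha_d > 0$ such that \emph{every} closed halfspace containing $\pp$ meets \emph{each} $X_i$ in at least $\alpha_d n$ points---does not exist in general. Already for $d = 1$, take $X_1 = \{1,\dots,n\}$ and $X_2 = \{n+1,\dots,2n\}$: for any $\pp \in \R$, one of the two closed rays emanating from $\pp$ misses one of the two sets entirely, so no $\alpha_d > 0$ works. Neither of your two suggested routes repairs this. The $\alpha_d$-centerpoint regions of $X_1,\dots,X_{d+1}$ are each convex, but they may be pairwise disjoint (as in the example), so there is nothing for a Helly-type argument to intersect. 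And a point given by Theorem~\ref{t:barany} applied to $\bigcup_i X_i$ is deep only with respect to the union; it need not be deep with respect to any single $X_i$, and a pigeonhole on colors cannot manufacture deepness that is simply absent. Note also that your subsidiary conclusion that $\pp \in \bigcap_i \conv(X_i)$ is not forced by the statement of the theorem: in the $d=1$ example the point $n+\tfrac12$ lies in all $n^2$ rainbow segments yet in neither $\conv(X_1)$ nor $\conv(X_2)$, so the theorem is true while the auxiliary property you want to anchor the count is unavailable.

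The tail end of your plan is sound---in particular, discarding the $O(n^d)$ degenerate rainbow tuples via the fact that only $O(n^{d-1})$ spanned hyperplanes pass through a fixed point is exactly the device the paper uses to pass between the measure and discrete settings. But without a correct deep point, the B\'ar\'any-style count has no foothold. For comparison, the paper does not reprove Theorem~\ref{t:karasev} at all: it cites it as known (Pach's implicit version with a weaker constant, the topological proofs of Gromov and Karasev in the absolutely-continuous-measure setting, and Jiang's refinement) and only explains the routine transfer from measures to point sets by inflating each point to a small ball. The underlying colorful selection result is genuinely topological in nature; it does not reduce to a centerpoint argument, and to make your outline into a proof you would need either to invoke one of those measure-theoretic proofs and then perform the transfer, or to supply the topological overlap argument directly.
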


Theorem~\ref{t:karasev} is implicitly proved in~\cite{pach98} with $k'_d$
roughly around $\frac1{(5d)^{d^2}}$. The proof in~\cite{karasev12}
(following Gromov) gives the result with $k'_d = \frac1{(d+1)!}$. The constant has been recently improved to $k'_d = \frac{2d}{(d+1)!(d+1)}$~\cite{Jia14_slight}. We note that
the main result in~\cite{karasev12} and~\cite{Jia14_slight} is in the setting of absolutely continuous measures.
It can be easily transformed into the setting of Theorem~\ref{t:karasev} by
replacing each point $x \in X_1 \cup \cdots \cup X_{d+1}$ by a sufficiently
small ball centered in $x$ and using the 
fact that for a sufficiently small $\varepsilon$, any point of $\R^d$ can be
$\varepsilon$-close to the boundary of at most $O(n^d)$ simplices spanned by
$X_1 \cup \cdots \cup X_{d+1}$. This follows from the fact that every point of $\R^d$ is in at most $O(n^{d-1})$ hyperplanes spanned by $X_1 \cup \cdots \cup X_{d+1}$~\cite[Lemma 9.1.2]{matousek02}.

An interesting selection theorem in a `dual' setting was recently 
obtained by B\'{a}r\'{a}ny and Pach~\cite{barany-pach14}. 
A variant of Pach's theorem for hypergraphs with bounded degree was, also
recently, obtained by Fox et al.~\cite{fox-gromov-lafforgue-naor-pach12}.

\subsubsection*{Measure version of Pach's theorem}

Due to the similarity of Pach's theorem to other geometric selection theorems,
such as Theorem~\ref{t:karasev}, one can expect that Pach's theorem also admits
a measure version, where point sets are replaced with probability measures.
We will indeed verify this expectation (with the same value for the selection
constant). We prove the theorem for Borel probability measures, which generalize both finite point sets and bounded absolutely continuous measures.

We recall that $\mu$ is a \emph{Borel probability measure} on $\R^d$ if $\mu$ is a nonnegative measure defined on the $\sigma$-algebra of Borel subsets of $\R^d$ and $\mu(\R^d)=1$.

\begin{theorem}
\label{t:pach_continuous}
 Let $\mu_1,\ldots, \mu_{d+1}$ be Borel probability measures on $\R^d$. Then there exist sets $Z_i\subseteq \R^d$ with
 $\mu_i(Z_i)\geq 2^{-2^{d^2+3d}}$ and a point $\pp\in \R^d$ contained in all $(Z_1,\ldots, Z_{d+1})$-simplices.
\end{theorem}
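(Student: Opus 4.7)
I will derive Theorem~\ref{t:pach_continuous} from Theorem~\ref{t:bounds}(2) by approximating each $\mu_i$ with empirical measures of finite point sets and then passing to a limit via tightness and weak convergence. Write $c_d := 2^{-2^{d^2+3d}}$. For each $n \in \N$ and $i \in [d+1]$, I choose an $n$-element set $X_i^{(n)} \subseteq \R^d$ of distinct points (a small generic perturbation ensures distinctness) such that the empirical measures $\nu_i^{(n)} := \tfrac{1}{n}\sum_{x \in X_i^{(n)}}\delta_x$ converge weakly to $\mu_i$. Such a sequence exists for any Borel probability measure (for instance, by i.i.d.\ sampling, or by placing points on a refining grid with counts proportional to the cell masses). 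Theorem~\ref{t:bounds}(2) applied to $(X_1^{(n)},\ldots,X_{d+1}^{(n)})$ then yields a point $\pp^{(n)} \in \R^d$ and subsets $Y_i^{(n)} \subseteq X_i^{(n)}$ with $|Y_i^{(n)}| \geq c_d n$ such that $\pp^{(n)}$ belongs to every $(Y_1^{(n)},\ldots,Y_{d+1}^{(n)})$-simplex.

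The key step is extracting a limit. Set $\lambda_i^{(n)} := \tfrac{1}{n}\sum_{y \in Y_i^{(n)}}\delta_y$, a positive measure dominated by $\nu_i^{(n)}$ with total mass in $[c_d,1]$. Since $\{\nu_i^{(n)}\}_n$ is tight (being weakly convergent), so are $\{\lambda_i^{(n)}\}_n$ and $\{\nu_i^{(n)}-\lambda_i^{(n)}\}_n$. By Prokhorov's theorem, together with a diagonal argument over $i$, I pass to a common subsequence along which both $\lambda_i^{(n)} \to \lambda_i$ and $\nu_i^{(n)}-\lambda_i^{(n)} \to \rho_i$ weakly for every $i$. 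Additivity of weak limits gives $\mu_i = \lambda_i + \rho_i$, and positivity of $\rho_i$ forces $\lambda_i \leq \mu_i$ as measures, with $\lambda_i(\R^d) \geq c_d$. The sequence $\pp^{(n)}$ lies in the convex hull of the tight family $X_1^{(n)}\cup\cdots\cup X_{d+1}^{(n)}$ and is hence bounded, so after a further subsequence $\pp^{(n)} \to \pp$.

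Finally, set $Z_i := \supp \lambda_i$. Since the complement of the support of a finite Borel measure on the separable space $\R^d$ has measure zero, $\mu_i(Z_i) \geq \lambda_i(Z_i) = \lambda_i(\R^d) \geq c_d$. For any $z_i \in Z_i$ and any small $\epsilon > 0$ with $\lambda_i(\partial B(z_i,\epsilon))=0$ (this excludes only countably many $\epsilon$), the portmanteau theorem combined with $\lambda_i(B(z_i,\epsilon))>0$ gives $\lambda_i^{(n)}(B(z_i,\epsilon))>0$ for all large $n$, and hence $Y_i^{(n)} \cap B(z_i,\epsilon) \neq \emptyset$. A diagonal choice then produces $y_i^{(n)} \in Y_i^{(n)}$ with $y_i^{(n)} \to z_i$, and passing to the limit in $\pp^{(n)} \in \conv(y_1^{(n)},\ldots,y_{d+1}^{(n)})$ yields $\pp \in \conv(z_1,\ldots,z_{d+1})$. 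The main technical subtlety is ensuring that no mass of the $\lambda_i$ escapes to infinity and that the supports behave well under weak limits; both are handled by tightness inherited from the $\mu_i$ together with Prokhorov's theorem and the portmanteau characterisation.
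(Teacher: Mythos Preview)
Your argument is essentially correct and takes a genuinely different route from the paper. The paper does not pass to weak limits of the empirical measures of the sets $Y_i^{(n)}$; instead it exploits the specific geometric output of the proof of Theorem~\ref{t:bounds}(2), namely that each $Y_i^{(n)}$ lies in a corner region (a simplicial cone) $Z_{i,n}$ of an arrangement $\H_n$ of $d+1$ hyperplanes with $\pp_n\in\Delta(\H_n)$. The paper then encodes the vertices of $\Delta(\H_n)$ together with an auxiliary family of unit vectors $\uu_{i,F,n}$ as a point in a fixed finite-dimensional space, shows this data stays in a compact set, and extracts a subsequential limit defining cones $Z_i$. A substantial technical subsection is devoted to controlling the limit when the simplex $\Delta(\H_n)$ degenerates. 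The payoff is that the resulting $Z_i$ are explicit convex cones. Your approach, by contrast, uses only the \emph{conclusion} of Theorem~\ref{t:bounds}(2), Prokhorov compactness for the sub-probability measures $\lambda_i^{(n)}$, and sets $Z_i=\supp\lambda_i$; this is shorter, avoids all the cone-degeneracy analysis, and automatically transfers any future improvement of the discrete constant to the measure version (a point the paper addresses separately in its final remark).

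One small gap to tighten: the sentence ``$\pp^{(n)}$ lies in the convex hull of the tight family \ldots\ and is hence bounded'' is not a valid inference as stated, since tightness of the empirical measures does not bound the individual sample points. The correct argument (which mirrors the paper's use of Claim~\ref{c:cut_hyp}) is: by tightness of $\{\nu_i^{(n)}\}$ choose a compact ball $B$ with $\nu_i^{(n)}(B)>1-c_d$ for all large $n$ and all $i$; then $|X_i^{(n)}\setminus B|<c_d n\le |Y_i^{(n)}|$, so each $Y_i^{(n)}$ meets $B$, and picking $y_i\in Y_i^{(n)}\cap B$ gives $\pp^{(n)}\in\conv(y_1,\dots,y_{d+1})\subseteq B$. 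With this fix the proof is complete.
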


Theorem~\ref{t:pach_continuous} follows from Theorem~\ref{t:bounds}(2) by approximating Borel measures as weak limits of discrete measures. The reduction relies on the fact that each of the sets $Y_i$ in Pach's theorem can be obtained as an intersection of $X_i$ with a simplicial cone, a region of small ``geometric complexity''.
We prove Theorem~\ref{t:pach_continuous} in Section~\ref{section_measure}. 

\section{The minimum solid angle in a simplex}
\label{s:msa}

We start our preparations for the proof of Theorem~\ref{t:bounds}(1) by
bounding the minimum solid angle in a simplex. 

Let $\Delta$ be a $d$-simplex and $\vv$ be a vertex of $\Delta$. By the \emph{solid
angle} at $\vv$ in $\Delta$ we mean the value
\[
\sa(\vv; \Delta) \mathrel{\mathop:}= \frac{\Vol( B(\vv; \varepsilon) \cap \Delta)}{\Vol(B(\vv; \varepsilon))}
\]
 where $B(\xx; r)$ denotes the ball centered in $\xx$ with radius $r$;
 $\varepsilon$ is small enough (so that $B(\vv; \varepsilon)$ does not meet the
 hyperplane determined by the vertices of $\Delta$ except $\vv$); and $\Vol$
 denotes the $d$-dimensional volume (that is, the $d$-dimensional Lebesgue measure).
 Note that in our case the solid angle is normalized, that is, it measures the
 probability that a random point of $B(\vv; \varepsilon)$ belongs to the
 simplex. Note also that the solid angle can be equivalently defined as the ratio of the
$(d-1)$-dimensional volume of the spherical simplex $\partial B(\vv;
\varepsilon) \cap \Delta$ and the $(d-1)$-dimensional volume of the sphere
$\partial B(\vv; \varepsilon)$. For our needs, however, the definition via $d$-volumes is much more convenient.

 Our goal is to give the upper
 bound on the minimum solid angle of $\Delta$: 
\[
\msa(\Delta) \mathrel{\mathop:}= \min\{\sa(\vv; \Delta)\colon \hbox{$\vv$ is a vertex of
$\Delta$}\}.
\]
\begin{theorem}
  \label{t:msa}
  The minimum solid angle of any $d$-simplex $\Delta$ satisfies $\msa(\Delta)
  \leq e^{-(1/2 - o(1))(d \ln d)}$.
\end{theorem}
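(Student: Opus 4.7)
The plan is to prove the bound via a cone cap volume argument at a suitably chosen vertex.

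First, for any vertex $v$ of $\Delta$, let $h_v$ denote the altitude from $v$ (the distance from $v$ to the affine hull of the opposite facet). The key geometric observation is that for every $r \le h_v$, the set $B(v, r) \cap \Delta$ coincides with the intersection of the tangent cone at $v$ with $B(v, r)$ (because the ball of radius $h_v$ around $v$ does not meet any facet not containing $v$), and therefore has $d$-dimensional volume exactly $\sa(v;\Delta) \cdot \omega_d r^d$, where $\omega_d$ is the volume of the unit ball in $\R^d$. Since this region is contained in $\Delta$, taking $r = h_v$ gives the fundamental inequality
\[
\sa(v;\Delta) \le \frac{\Vol(\Delta)}{\omega_d\, h_v^d}.
\]

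Next, I would choose $v^{*}$ to be the vertex of $\Delta$ maximizing $h_v$. Rewriting via the identity $\Vol(\Delta) = h_{v^{*}} A_{v^{*}}/d$, where $A_{v^{*}}$ is the $(d-1)$-volume of the facet opposite $v^{*}$, this becomes
\[
\sa(v^{*};\Delta) \le \frac{A_{v^{*}}}{d\, \omega_d\, h_{v^{*}}^{d-1}}.
\]
The plan is to control $A_{v^{*}}/h_{v^{*}}^{d-1}$ by noting that the opposite facet is itself a $(d-1)$-simplex inscribed in a ball whose radius is bounded by the circumradius of $\Delta$. Combined with Stirling's estimate $\omega_d^{-1} \sim (d/(2\pi e))^{d/2}$, plugging in gives $\sa(v^{*};\Delta) \le e^{-(1/2-o(1))d\ln d}$ as desired.

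The main obstacle will be the case of ``thin'' or ``needle-like'' simplices, where the maximum altitude $h_{v^{*}}$ can be significantly smaller than the circumradius, making the naive bound on $A_{v^{*}}$ too loose at $v^{*}$. To handle these, one would choose the distinguished vertex differently, for instance based on the configuration of edge unit vectors on the unit sphere (using the fact that $d+1$ spherical points force certain pairwise distance relationships at some vertex), and bound the resulting solid angle via a Gaussian representation of the tangent cone. Intuitively, the extremal configuration is the regular $d$-simplex, which already satisfies the desired bound comfortably (achieving, in fact, the stronger rate $\sim 1/(d+1)!$); the theorem asserts that no other $d$-simplex can essentially improve on this.
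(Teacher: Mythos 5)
Your fundamental inequality $\sa(v;\Delta)\le \Vol(\Delta)/(\omega_d h_v^d)$ is correct (it is essentially the ``easy approximation'' the paper uses to estimate $\rho_d$ for the regular simplex), and you correctly verify that it gives the desired bound for the regular $d$-simplex. But you have not actually proved the theorem, because exactly the obstacle you flag --- thin simplices --- is fatal to this approach as stated, and ``choose the distinguished vertex differently'' is not something that can save it: there are simplices for which $\Vol(\Delta)/(\omega_d h_v^d)$ exceeds $1$ for \emph{every} vertex $v$, so no choice of $v$ makes the altitude bound useful. Already for $d=2$ consider an isoceles triangle with base $1$ and height $\varepsilon$: all three altitudes are $O(\varepsilon)$ while $\Vol=\varepsilon/2$, so $\Vol/(\omega_2 h_v^2)=\Theta(1/\varepsilon)\to\infty$ at every vertex, even though the minimum angle is $\Theta(\varepsilon)$. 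The remainder of the proposal (``configuration of edge unit vectors on the unit sphere'', ``Gaussian representation of the tangent cone'') is a gesture toward a different argument rather than a proof, and it is precisely where the mathematical content would have to live. As a side remark, the parenthetical claim that the regular $d$-simplex achieves solid angle $\sim 1/(d+1)!$ is incorrect: Rogers' formula gives $\rho_d=e^{-(1/2+o(1))d\ln d}$, which is much larger than $1/(d+1)!=e^{-(1+o(1))d\ln d}$; the theorem's bound is in fact asymptotically tight for the regular simplex up to the $o(1)$ in the exponent, so there is no ``stronger rate'' available.

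The paper's proof takes a genuinely different and more robust route. It works with the normal fan: the polar cones $C_i^*$ at the vertices cover $\R^d$ (Lemma~\ref{l:polars_cover}), so some $C_i^*$ has restricted volume at least $\beta_d/(d+1)$. The spherical Blaschke--Santal\'o inequality (Theorem~\ref{t:bs}) then says that among cones with a polar of this prescribed volume, the round cone maximizes the volume of the original cone, reducing the problem to a one-parameter computation for round cones (Lemma~\ref{l:round_polar}), which yields $e^{-(1/2-o(1))d\ln d}$. This sidesteps the thin-simplex difficulty entirely, because the pigeonhole on the polar cones applies uniformly regardless of how degenerate the simplex is.
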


Before we prove Theorem~\ref{t:msa}, let us remark that in general we
consider determining the upper bound on $\msa(\Delta)$ as an interesting
question. Let $\rho_d$ be the solid angle in the regular $d$-simplex.
Obviously any upper bound on $\msa(\Delta)$ for a $d$-simplex $\Delta$ is at
least $\rho_d$. On the other hand, we are not aware of any example of a
$d$-simplex $\Delta$ with $\msa(\Delta) > \rho_d$. Thus, we suggest the
following question.

\begin{question}
  \label{q:msa}
  Is it true that $\msa(\Delta) \leq \rho_d$ for any $d$-simplex $\Delta$?
If the answer is negative, what is the least upper bound on $\msa(\Delta)$ and
for which simplex is it attained?
\end{question}

Akopyan and the first author show~\cite{akopyan-karasev15arxiv} that the answer is affirmative if $d \leq 4$.

Rogers~\cite{Rogers61_asymptotic} derived an asymptotic formula for the surface area of a regular spherical simplex, which implies the following asymptotic formula for $\rho_d$:
\[
\rho_d=\frac{\sqrt{d+1}}{\sqrt{2}e2^d} \cdot \left(\frac{2e}{\pi d}\right)^{d/2} \cdot \Bigl(1+O\Bigl(\frac{1}{d}\Bigr)\Bigr).
\]
Further asymptotic simplification gives $\rho_d = e^{-(1/2 + o(1))(d\ln d)}$. This
shows that our bound in Theorem~\ref{t:msa} is tight up to lower order terms in the
exponent. Rogers' proof is also reproduced in a book by Zong~\cite[Lemma
7.2]{Zong99_Sphere}. We have learnt about this from an answer of
Joseph O'Rourke~\cite{Rourke11_MO} to a question of Boris Bukh at MathOverflow.

 The simplified asymptotic formula for $\rho_d$, up to lower order terms in the
 exponent, also follows by the following easy approximation. Let $\Delta = \Delta_1$ be a regular unit $d$-simplex, let $\vv$ be a vertex of
 $\Delta$ and let $\Delta_\kappa$ be a homothetic copy of $\Delta$ under a
 homothety centered at $\vv$ with coefficient $\kappa > 0$. Simple computation
 shows that the length of the median in $\Delta$ is at least $1/\sqrt 2$, and therefore $\Delta_\varepsilon \subseteq B(\vv; \varepsilon) \cap \Delta \subseteq \Delta_{\sqrt 2 \varepsilon}$.
 This gives 
 \[
 \Vol(\Delta_\varepsilon)  \le  \varepsilon^d \rho_d \beta_d  \le   \Vol(\Delta_{\sqrt 2\varepsilon})
\]
 where $\beta_d=\frac{\pi^{d/2}}{\Gamma(d/2+1)}$ is the volume of the unit $d$-ball.
 Using that 
\[
\Vol(\Delta_\kappa) =\kappa^d \frac{\sqrt{d+1}}{d!2^{d/2}}
\]
and the estimates $\Gamma(d/2+1)=e^{(1/2-o(1))\cdot d\ln d}$ and
$d!=e^{(1-o(1))\cdot d\ln d}$, we obtain that $\rho_d = e^{-(1/2\pm o(1))\cdot d \ln d}$.


\subsubsection*{Normal cones and spherical Blaschke--Santal\'{o} inequality}

Now, we focus on a proof of Theorem~\ref{t:msa}. The main step is to use the
Spherical Blaschke--Santal\'{o} inequality, which allows us to bound the solid
angle (of a cone) if we know the solid angle of the polar cone.
The idea with polar cones was suggested by Yoav Kallus~\cite{kallus13_MO}. In a 
previous version of this paper we obtained Theorem~\ref{t:msa} with a weaker, 
exponentially decreasing, bound with a self-contained
proof~\cite{kyncl-patak-safernova-tancer14arxiv-v2}. 
Later we found the current proof using the spherical Blaschke--Santal\'{o} inequality.

We start with a few definitions
and known results.
Let $C \subseteq \R^d$ be a closed convex cone with apex in the origin. 
By the (restricted) \emph{volume} of the cone $C$ we mean the the value $\Vol'(C) \mathrel{\mathop:}=
\Vol(C \cap B^d)$, where $B^d$ is the unit ball centered in the origin.
The \emph{polar} (or
\emph{normal}) cone to $C$ is the cone 
$$
C^* = \{\xx \in \R^d \colon \xx \cdot \yy \leq 0 \hbox{ for any } \yy \in C\}. 
$$
A closed convex cone $C$ with apex in the origin is \emph{round} if the 
intersection $\partial C \cap \partial B^d$ is a (geometric) $(d-2)$-sphere. We need the 
following theorem which relates the (restricted) volumes of $C$ and $C^*$. By $\beta_d$ we 
denote the volume of $B^d$.

\begin{theorem}[Spherical Blaschke--Santal\'{o}
  inequality~{\cite[eq. (21)]{gao-hug-schneider03}}]
\label{t:bs}

  Let $w \in (0, \frac12 \beta_d)$ be a fixed number. Let $C$ be a closed
  convex cone
  with apex in the origin such that $\Vol'(C) = w$. Then $\Vol'(C^*)$ is maximal
  if $C$ is a round cone.

\end{theorem}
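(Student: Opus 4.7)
The plan is to prove Theorem~\ref{t:bs} via a spherical symmetrization argument, paralleling the standard proof strategy for the classical Euclidean Blaschke--Santal\'{o} inequality. The first step is to reformulate everything on the unit sphere. Set $A \mathrel{\mathop:}= C \cap \partial B^d$ and let $\sigma$ denote the $(d-1)$-dimensional spherical measure. A direct polar-coordinate integration gives $\Vol'(C) = \sigma(A)/d$, while the polar cone corresponds to the spherical polar
\[
A^\circ \mathrel{\mathop:}= \{\xx \in \partial B^d \colon \xx \cdot \yy \leq 0 \text{ for all } \yy \in A\},
\]
so that $\Vol'(C^*) = \sigma(A^\circ)/d$. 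Under the hypothesis $w < \tfrac12 \beta_d$ we have $\sigma(A) < \sigma(\partial B^d)/2$. The theorem then reduces to: among spherically convex $A \subseteq \partial B^d$ with $\sigma(A)$ fixed and less than half the total, the quantity $\sigma(A^\circ)$ is maximized when $A$ is a spherical cap (equivalently, $C$ is a round cone).

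The second step is to introduce a spherical Steiner-type symmetrization $A \mapsto A_H$, defined for a linear hyperplane $H$ through the origin by rearranging, on each great circle $\ell$ perpendicular to $H$, the slice $A \cap \ell$ into a spherical arc symmetric about $H \cap \ell$. I would verify three properties: (a) $\sigma(A_H) = \sigma(A)$, by Fubini on meridians; (b) $A_H$ remains spherically convex, a standard symmetrization lemma proved meridian by meridian using that spherical convexity is characterized by short-arc containment; and (c) the key monotonicity $\sigma((A_H)^\circ) \geq \sigma(A^\circ)$. Property~(c) is the structural heart of the argument: using the representation $A^\circ = \bigcap_{\yy \in A}\{\xx \colon \xx \cdot \yy \leq 0\}$, one decomposes both $A$ and $A^\circ$ into meridional fibers aligned with the symmetrization axis, and applies a one-dimensional rearrangement inequality (of Brunn--Minkowski or Hardy--Littlewood type) on each fiber.

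The third step is iteration and passage to the limit. Applying the symmetrizations $A \mapsto A_{H_k}$ along a dense countable family of hyperplanes $\{H_k\}_k$ drives $A$ to the unique spherical cap $A_{\mathrm{cap}}$ with $\sigma(A_{\mathrm{cap}}) = \sigma(A)$ in the Hausdorff metric. Both $\sigma$ and the spherical polar map are continuous on spherically convex bodies of measure bounded away from $0$ and $\sigma(\partial B^d)$, which the standing assumption $w < \tfrac12 \beta_d$ secures throughout the iteration. Taking limits yields $\sigma(A^\circ) \leq \sigma(A_{\mathrm{cap}}^\circ)$, which after multiplying by $1/d$ reads $\Vol'(C^*) \leq \Vol'(C_{\mathrm{cap}}^*)$, where $C_{\mathrm{cap}}$ is the round cone of restricted volume $w$, as required.

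I expect the main obstacle to be the monotonicity property~(c): the polar relation on the sphere is inherently global, since it couples all of $A$ through an intersection of half-spaces, while the symmetrization acts locally fiber by fiber, so one must carefully decouple the global constraint into fiber-wise constraints before the rearrangement inequality can be invoked. A secondary, purely technical, difficulty is the continuity of the spherical polar map under Hausdorff convergence; this relies on $A^\circ$ having nonempty interior throughout the iteration, which is guaranteed by $w < \tfrac12 \beta_d$.
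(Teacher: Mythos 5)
The paper does not prove this statement at all: Theorem~\ref{t:bs} is imported verbatim from Gao, Hug and Schneider~\cite{gao-hug-schneider03}, and the only original content in the paper is the remark that restating the result in terms of the restricted cone volume $\Vol'$ rather than the spherical $(d-1)$-volume of $C\cap\partial B^d$ is harmless (your Step~1 reproves exactly this reduction, correctly). So you are attempting to supply a proof of a cited result, and your sketch should be judged on its own.

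As it stands there is a genuine gap, and it sits exactly where you locate it. Property~(c), the monotonicity $\sigma((A_H)^\circ)\ge\sigma(A^\circ)$ under your meridional Steiner-type rearrangement, is the entire content of the theorem, and the appeal to ``a one-dimensional rearrangement inequality of Brunn--Minkowski or Hardy--Littlewood type'' applied ``fiber by fiber'' is not an argument: the polar $A^\circ=\bigcap_{\yy\in A}\{\xx\colon\xx\cdot\yy\le 0\}$ couples all fibers of $A$ with all fibers of $A^\circ$, and no decoupling is exhibited. In the Euclidean Meyer--Pajor proof the analogous step works because bilinearity of the inner product shows that the polar of the Steiner symmetral contains the set of midpoints of suitable pairs of points of $K^\circ$; that computation has no direct spherical analogue. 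Property~(b) is also doubtful: unlike Euclidean Steiner symmetrization, rearranging each perpendicular great circle into a symmetric arc is not known (and I believe is false in general) to preserve geodesic convexity. The route that actually closes these gaps --- and the one used for results of this type on the sphere --- replaces Steiner symmetrization by \emph{two-point symmetrization} (polarization) $\tau_H$: one checks pointwise the elementary inclusion $\tau_H(A^\circ)\subseteq(\tau_H A)^\circ$, which immediately gives $\sigma(A^\circ)=\sigma(\tau_H(A^\circ))\le\sigma((\tau_H A)^\circ)$, and one never needs $\tau_H A$ to be convex since polarity only sees the convex hull; iterating these symmetrizations to the cap then finishes the proof. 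I recommend either switching to that argument or simply citing~\cite{gao-hug-schneider03} as the paper does.
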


Note that Theorem~\ref{t:bs} is stated in~\cite{gao-hug-schneider03} in the
setting of spherical $(d-1)$-volumes of $C\cap \partial B^d$. However, our
small change in the setting does not affect the extremal property.

Given a $d$-simplex $\Delta$ with vertices $\vv_1, \dots, \vv_{d+1}$ and $i \in [d+1]$, let $C_i$ be the cone with apex in the origin obtained by shifting the cone with apex $\vv_i$ determined by $\Delta$. Then the spherical angle $\sa(\vv_i, \Delta)$ can be expressed as
$\Vol'(C_i)/\beta_d$.
An important well-known observation is that the polar cones $C^*_i$ cover the
space (they form so-called \emph{normal fan}).

\begin{lemma}
\label{l:polars_cover}
  The cones $C^*_i$ cover $\R^d$. Consequently, there is $i \in [d+1]$ such
  that $\Vol'(C^*_i) \geq \frac{1}{d+1} \beta_d$.
\end{lemma}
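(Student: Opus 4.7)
The plan is to identify the polar cone $C_i^*$ explicitly as the set of directions along which $\vv_i$ is a maximizer of the linear function $\xx \mapsto \xx\cdot\vv$ over the vertex set of $\Delta$, and then observe that every point of $\R^d$ has such a maximizer.

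More concretely, first I would unwind the definitions. By construction, $C_i$ is the conic hull of the edge vectors $\vv_j - \vv_i$ for $j \in [d+1]\setminus\{i\}$. The polar cone is therefore
\[
C_i^* = \{\xx \in \R^d : \xx\cdot(\vv_j - \vv_i) \leq 0 \text{ for all } j \neq i\} = \{\xx \in \R^d : \xx\cdot \vv_i = \max_{j \in [d+1]} \xx\cdot\vv_j\}.
\]
From this reformulation the covering property is immediate: for any $\xx \in \R^d$, the linear functional $\vv \mapsto \xx\cdot\vv$ attains its maximum on the finite set $\{\vv_1,\ldots,\vv_{d+1}\}$ at some vertex $\vv_i$, and then $\xx \in C_i^*$. (This is just the standard fact that the polar cones at vertices of a polytope form its outer normal fan, which tiles $\R^d$; for a simplex there are no subtleties since the vertex set is affinely independent, but we do not even need this structure here.)

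For the consequence, I would simply intersect with the unit ball and apply an averaging argument. Since $B^d = \bigcup_{i=1}^{d+1} (C_i^* \cap B^d)$, subadditivity of volume gives
\[
\sum_{i=1}^{d+1} \Vol'(C_i^*) \geq \Vol(B^d) = \beta_d,
\]
so at least one $i$ satisfies $\Vol'(C_i^*) \geq \beta_d/(d+1)$ by pigeonhole.

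There is no real obstacle here; the only thing to be slightly careful about is the description of $C_i$ as a translate of the tangent cone at $\vv_i$ so that the polarity computation is correct, and the fact that the boundaries between the cones have measure zero does not affect the averaging step.
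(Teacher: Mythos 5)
Your proof is correct and takes essentially the same route as the paper's: both pick, for a given $\xx$, the vertex $\vv_i$ maximizing $\xx\cdot\vv_i$ and observe this places $\xx$ in $C_i^*$, then conclude by pigeonhole on volumes. Your version just makes the normal-fan description and the subadditivity step a bit more explicit than the paper's sketch.
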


\begin{proof} For completeness, we sketch a proof. Let $\xx \in \R^d$ and let $i
  \in [d+1]$ be such that $\xx \cdot \vv_i$ is maximal among all choices
  of $i$. Then $\xx \cdot (\yy - \vv_i) \leq 0$ for any $\yy \in \Delta$ which
  implies that $\xx \in C^*_i$.
\end{proof}

By Lemma~\ref{l:polars_cover}, there is a polar cone $C^*_i$ with large volume. By
 Blaschke--Santal\'{o} inequality, the cone $C_i$ must have small volume. Using the concentration of the measure on the sphere,
we estimate $\Vol'(C_i)$ from above.
 We present an elementary argument, since we do not need the concentration of the measure in its full strength.

\begin{lemma}
\label{l:round_polar}
  Let $C^* \subseteq \R^d$ be a round cone such that $\Vol'(C^*) \geq
  \frac1{d+1}\beta_d$. Then $\Vol'(C) \leq
  e^{-(1/2 - o(1))(d \ln d)}\beta_d$.
\end{lemma}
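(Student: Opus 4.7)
The plan is to use the rotational symmetry of round cones. Because $C^*$ is round with apex at the origin, after a rotation we may assume $C^* = \{\xx \in \R^d : \xx\cdot\mbf{e}_d \geq \|\xx\|\cos\alpha\}$ is a circular cone around $\mbf{e}_d$ with half-angle $\alpha \in (0,\pi/2]$. A direct duality computation then shows that the polar $C = (C^*)^*$ is the circular cone around $-\mbf{e}_d$ with half-angle $\beta := \pi/2 - \alpha$. The hypothesis will force $\alpha$ to be close to $\pi/2$, so $\beta$ will be small, and the spherical cap $C \cap S^{d-1}$ will be tiny.

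To make this quantitative, I would first translate restricted volumes into normalized spherical cap areas. Integration in spherical coordinates yields $\Vol'(K) = \sigma(K \cap S^{d-1})/d$ for any cone $K$ with apex at the origin, and $\beta_d = \sigma(S^{d-1})/d$, so $\Vol'(K)/\beta_d$ is precisely the $\sigma$-fraction of $S^{d-1}$ covered by $K$. Writing
\[
A(\theta) := \frac{\int_0^\theta \sin^{d-2}(t)\,dt}{\int_0^\pi \sin^{d-2}(t)\,dt}
\]
for the normalized area of a spherical cap of half-angle $\theta$, the hypothesis becomes $A(\pi/2 - \beta) \geq 1/(d+1)$ and the goal becomes $A(\beta) \leq e^{-(1/2 - o(1))d\ln d}$.

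The first key step is to bound $\beta$ from above using a concentration estimate. Observing that $A(\pi/2-\beta) = \Pr[\xx \cdot \mbf{e}_d \geq \sin\beta]$ for $\xx$ uniform on $S^{d-1}$, an elementary tail bound on the sphere---obtainable, for instance, from $(1-s^2)^{(d-3)/2} \leq e^{-(d-3)s^2/2}$ together with the standard asymptotic $\int_0^\pi \sin^{d-2}(t)\,dt = \Theta(d^{-1/2})$---gives $\Pr[\xx \cdot \mbf{e}_d \geq t] \leq p(d)\, e^{-(d-3)t^2/2}$ for some polynomial $p(d)$. Combined with the hypothesis, this forces $\sin^2\beta = O((\ln d)/d)$, hence $\beta = O(\sqrt{(\ln d)/d})$ for sufficiently large $d$.

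The second key step is to bound $A(\beta)$ directly from its integral. Since $\sin t \leq t$ and $\sin^{d-2}$ is increasing on $[0,\pi/2]$,
\[
A(\beta) \leq \frac{\beta \sin^{d-2}\beta}{\int_0^\pi \sin^{d-2}(t)\,dt} \leq O(\sqrt d)\cdot \beta^{d-1}.
\]
With $\beta = O(\sqrt{(\ln d)/d})$, one has $\beta^{d-1} = \exp\bigl(-\tfrac{d-1}{2}(\ln d - \ln\ln d - O(1))\bigr) = e^{-(1/2 - o(1))d\ln d}$, and the polynomial factor is absorbed into the $o(1)$. Multiplying by $\beta_d$ then yields the desired bound on $\Vol'(C)$. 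The only delicate point is the spherical tail estimate in the previous paragraph, but only crude constants are needed, so elementary methods suffice and no heavy concentration-of-measure machinery is required.
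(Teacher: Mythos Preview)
Your proposal is correct and follows essentially the same approach as the paper. Both arguments exploit the polar duality of round cones and the concentration of measure on the sphere: the hypothesis forces the half-angle $\beta$ of $C$ to satisfy $\sin^2\beta = O((\ln d)/d)$, and then the cap of half-angle $\beta$ has normalized area $e^{-(1/2-o(1))d\ln d}$.

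The only difference is presentational. The paper phrases both steps geometrically: for the first step it observes that $C^*\cap B^d$ is contained in a ball of radius $\sqrt{1-\gamma^2}$ (with $\gamma=\sin\beta$), which immediately gives $(1-\gamma^2)^{d/2}\ge 1/(d+1)$ and hence $\gamma^2\le (2/d)\ln(d+1)$; for the second step it observes that $C\cap B^d$ is contained in the cylinder $[0,1]\times B^{d-1}_\gamma$, giving $\Vol'(C)\le \gamma^{d-1}\beta_{d-1}$. You instead work directly with the spherical-cap integral $A(\theta)$ and the tail estimate $(1-s^2)^{(d-3)/2}\le e^{-(d-3)s^2/2}$. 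The paper's containment tricks avoid any explicit integration and are slightly more elementary; your analytic formulation is a bit more systematic and makes the connection to standard concentration inequalities transparent. In substance the two proofs are the same, with your $\sin\beta$ playing the role of the paper's $\gamma$.
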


\begin{proof}
Without loss of generality, we assume that the $x$-axis (that is, the first-coordinate axis in $\R^d$) is the axis of symmetry
of $C^*$. Let $h$ be the hyperplane determined by the $(d-2)$-sphere $\partial
C^* \cap \partial B^d$. Let $\gamma$ be the distance of $h$ from the origin.
Since $\Vol'(C^*) \geq \frac1{d+1}\beta_d$, we deduce that $\gamma \leq \frac12$ and
therefore $C^* \cap B^d$ fits into a ball of radius $\sqrt{1-\gamma^2}$, centered in the
intersection of $h$ and the $x$-axis; see Figure~\ref{f:volume_dual_cone}
(left). (We have borrowed this idea from~\cite{racke06}, aiming at a reasonable
estimate without precise computation.) Consequently, $\Vol'(C^*) \leq (1 -
\gamma^2)^{d/2} \beta_d$, which implies
\begin{equation}
\label{e:gamma}
  \gamma^2 \leq 1 - \left(\frac1{d+1}\right)^{2/d} \leq \frac2d\ln(d+1),
\end{equation}
using the estimate $1 - x \leq -\ln x$.

\begin{figure}
\begin{center}
\includegraphics{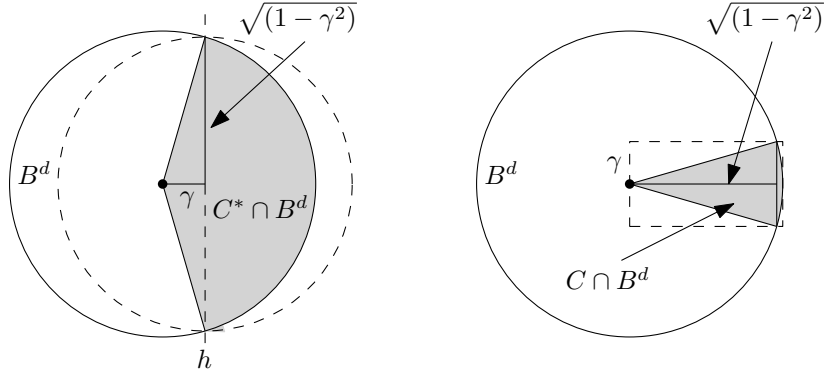}
\caption{$C^* \cap B^d$ fits into a dashed ball of radius $\sqrt{1-\gamma^2}$
whereas $C \cap B^d$ fits into a dashed cylinder $[0,1] \times B^{d-1}_\gamma$.}
\label{f:volume_dual_cone}
\end{center}
\end{figure}

On the other hand, $C$ fits into the cylinder $[0,1] \times
B^{d-1}_\gamma$ where we temporarily consider $\R^d$ as the product $\R
\times \R^{d-1}$ and $B^{d-1}_\gamma \subset \R^{d-1}$ is the ball with radius
$\gamma$ centered in the origin; see Figure~\ref{f:volume_dual_cone} (right). Therefore, 
using~\eqref{e:gamma} and $\beta_{d-1} \leq \beta_{d-2} = \frac{d}{2\pi} \beta_d$ for $d \geq 2$, 
we get
$$
\Vol'(C) \leq \gamma^{d-1}\beta_{d-1} \leq
\left(\frac2d\ln(d+1)\right)^{\frac{d-1}2}\frac{d}{2\pi} \beta_d \leq e^{-(1/2
  - o(1))(d \ln
d)}\beta_d.
$$
\end{proof}

\begin{proof}[Proof of Theorem~\ref{t:msa}]
  By Lemma~\ref{l:polars_cover} we know that $\Vol'(C^*_i) \geq
  \frac1{d+1}\beta_{d}$ for some $i \in [d+1]$. Let $C^*$ be the round cone
  such that $\Vol'(C^*_i) = \Vol'(C^*)$. By Theorem~\ref{t:bs} (for $C^*$) we know 
  that $\Vol'(C_i) \leq \Vol'(C)$ and Lemma~\ref{l:round_polar} implies that
  $\Vol'(C) \leq e^{-(1/2 - o(1))(d \ln d)} \beta_d$. Consequently,
  $$\msa(\Delta) \leq \frac{\Vol'(C_i)}{\beta_d} \leq e^{-(1/2 - o(1))(d \ln d)}$$
  as required.
\end{proof}

\ifobsolete
\gray{
\section{The minimum solid angle in a simplex (obsolete)}
\label{s:msa_obsolete}

We start our preparations for the proof of Theorem~\ref{t:bounds}(1) by
bounding the minimum solid angle in a simplex. 

Let $\Delta$ be a $d$-simplex and $v$ be a vertex of $\Delta$. By the \emph{solid
angle} at $\vv$ in $\Delta$ we mean the value
\[
\sa(\vv; \Delta) \mathrel{\mathop:}= \frac{\Vol( B(\vv; \varepsilon) \cap \Delta)}{\Vol(B(\vv; \varepsilon))}
\]
 where $B(\xx; r)$ denotes the ball centered in $\xx$ with radius $r$;
 $\varepsilon$ is small enough (so that $B(\vv; \varepsilon)$ does not meet the
 hyperplane determined by the vertices of $\Delta$ except $\vv$); and $\Vol$
 denotes the $d$-dimensional volume (that is, the $d$-dimensional Lebesgue measure).
 Note that in our case the solid angle is normalized, that is, it measures the
 probability that a random point of $B(\vv; \varepsilon)$ belongs to the
 simplex. Note also that the solid angle can be equivalently defined as the ratio of the
$(d-1)$-dimensional volume of the spherical simplex $\partial B(\vv;
\varepsilon) \cap \Delta$ and the $(d-1)$-dimensional volume of the sphere
$\partial B(\vv; \varepsilon)$. For our needs, however, the definition via $d$-volumes is much more convenient.

 Our goal is to give the upper
 bound on the minimum solid angle of $\Delta$: 
\[
\msa(\Delta) \mathrel{\mathop:}= \min\{\sa(\vv; \Delta)\colon \hbox{$\vv$ is a vertex of
$\Delta$}\}.
\]

\begin{theorem}
  \label{t:msa_obsolete}
  The minimum solid angle of an arbitrary $d$-simplex $\Delta$ satisfies $\msa(\Delta)
  \leq \gamma^d$ where $\gamma < 1$ is independent of $d$.
\end{theorem}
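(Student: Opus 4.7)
The plan is to follow the overall strategy of the proof of Theorem~\ref{t:msa} but to replace the spherical Blaschke--Santal\'{o} inequality by a more elementary ingredient, accepting a merely exponentially decreasing bound in place of the super-exponential one. The first step remains identical: apply Lemma~\ref{l:polars_cover} to produce an index $i \in [d+1]$ with $\Vol'(C_i^*) \geq \beta_d/(d+1)$. The second step is to establish, by elementary means, a dual-cone inequality of the form: there is a constant $\gamma < 1$ independent of $d$ such that every closed convex cone $C$ with apex at the origin satisfying $\Vol'(C^*) \geq \beta_d/(d+1)$ has $\Vol'(C) \leq \gamma^d \beta_d$. Combined with the first step, this immediately yields $\msa(\Delta) \leq \Vol'(C_i)/\beta_d \leq \gamma^d$.

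For the dual-cone inequality, my approach is to locate a ``thick'' direction in $C^*$: a unit vector $\uu \in C^*$ together with an inscribed spherical cap of positive angular radius $\theta$ around $\uu$ contained in $C^*$. Once such $\uu$ and $\theta$ are found, the bipolar theorem forces $C$ into the opposite cap cone of angular radius $\pi/2 - \theta$ about $-\uu$, and a standard spherical cap volume estimate gives $\Vol'(C) \leq (\cos\theta)^{d-1}\beta_d$ up to a polynomial factor; setting $\gamma = \cos\theta$ (slightly adjusted to absorb the polynomial factor) would complete the argument. The thick direction itself can be extracted from $C^*$ via a Steiner-type symmetrization that preserves $\Vol'(C^*)$ while making $C^*$ progressively more symmetric, ultimately comparing against a round cone of the same restricted volume, where everything is explicit.

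The main obstacle is the extraction of the thick direction: a simplicial cone $C^*$ with volume $\beta_d/(d+1)$ can be long and thin, so its inscribed spherical cap can shrink as $d$ grows. A naive argument only yields an angular radius of order $1/\sqrt{d}$, producing a $\gamma$ very close to $1$ (consistent with the remark that $\gamma < 0.9975$ can be achieved by the self-contained method). Pushing $\gamma$ below an absolute constant, and thereby obtaining the sharper super-exponential bound, requires the full strength of the spherical Blaschke--Santal\'{o} inequality; this is precisely the improvement embodied in the current (non-obsolete) proof of Theorem~\ref{t:msa} over the elementary approach outlined here.
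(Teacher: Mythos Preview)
Your proposal has a genuine gap. The plan hinges on extracting an inscribed spherical cap of angular radius $\theta$ inside $C^*$; you acknowledge that $\theta$ may shrink to order $1/\sqrt{d}$, but then claim this still yields an exponential bound $\gamma^d$ with $\gamma$ close to $1$. It does not: with $\theta \sim c/\sqrt{d}$ one has $(\cos\theta)^{d-1} \to e^{-c^2/2}$, a fixed constant rather than $\gamma^d$ for any $\gamma < 1$ independent of $d$. So the cap argument, even if the $1/\sqrt{d}$ inradius estimate could be established (which is itself nontrivial for an arbitrary convex cone of prescribed spherical volume), would give only $\msa(\Delta) = O(1)$, not the exponential decay the theorem asserts. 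Moreover, your proposed Steiner-type symmetrization of $C^*$ to a round cone of equal volume, followed by a comparison of the polars, is precisely the content of the spherical Blaschke--Santal\'{o} inequality; invoking it is not ``replacing'' that inequality by something more elementary.

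The paper's self-contained proof of this weaker exponential bound takes an entirely different route and does not pass through polar cones at all. It works directly with the simplex: take a longest edge $\aa\bb$; at least half of the remaining vertices $\vv_1,\dots,\vv_k$ (with $k \ge (d-1)/2$) satisfy $\angle \vv_i\aa\bb \le 60^\circ$, while the others satisfy $\angle \uu_j\aa\bb \le 90^\circ$. One then bounds the solid angle at $\aa$ by a Fubini argument, slicing the ball by affine $(k+1)$-flats parallel to the span of $\aa,\bb,\vv_1,\dots,\vv_k$, and using that a $60^\circ$-cone in $\R^{k+1}$ fits in a ball of radius $\sqrt{3}/2$ times the original. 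This gives $\msa(\Delta) \le (\sqrt{3}/2)^{(d+1)/2}$, i.e.\ $\gamma = \sqrt{\sqrt{3}/2} < 0.931$. (The constant $0.9975$ you quote is not the constant for $\msa$; it is the constant $\kappa$ in the obsolete upper bound on $\csup_d$, arising from a separate balancing argument.)
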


We prove the theorem with $\gamma = \sqrt \frac{\sqrt 3} 2 \leq 0.931$.

Before we prove Theorem~\ref{t:msa}, let us remark that in general we
  consider determining the upper bound on $\msa(\Delta)$ as an interesting
  question. Let $\rho_d$ be the solid angle in the regular $d$-simplex.
  Obviously any upper bound on $\msa(\Delta)$ for a $d$-simplex $\Delta$ is at
  least $\rho_d$. On the other hand, we are not aware of any example of a
  $d$-simplex $\Delta$ with $\msa(\Delta) > \rho_d$. Thus, we suggest the
  following question.

\begin{question}
  \label{q:msa}
  Is it true that $\msa(\Delta) \leq \rho_d$ for any $d$-simplex $\Delta$?
If the answer is negative, what is the least upper bound on $\msa(\Delta)$?
\end{question}

Rogers~\cite{Rogers61_asymptotic} derived an asymptotic formula for the surface area of a regular spherical simplex, which implies the following asymptotic formula for $\rho_d$:
\[
\rho_d=\frac{\sqrt{d+1}}{\sqrt{2}e2^d} \cdot \left(\frac{2e}{\pi d}\right)^{d/2} \cdot \Bigl(1+O\Bigl(\frac{1}{d}\Bigr)\Bigr).
\]
Further asymptotic simplification gives $\rho_d = 2^{-\Theta(d\log d)}$.
Rogers' proof is also reproduced in a book by Zong~\cite[Lemma
7.2]{Zong99_Sphere}. We have learnt about this from an answer of
Joseph O'Rourke~\cite{Rourke11_MO} to a question of Boris Bukh at MathOverflow.

 The simplified asymptotic formula for $\rho_d$ also follows by the following easy approximation. Let $\Delta = \Delta_1$ be a regular unit $d$-simplex, let $\vv$ be a vertex of
 $\Delta$ and let $\Delta_\kappa$ be a homothetic copy of $\Delta$ under a
 homothety centered at $\vv$ with coefficient $\kappa > 0$. Simple computation
 shows that the length of the median in $\Delta$ is at least $1/\sqrt 2$, and therefore $\Delta_\varepsilon \subseteq B(\vv; \varepsilon) \cap \Delta \subseteq \Delta_{\sqrt 2 \varepsilon}$.
 This gives 
 \[
 \Vol(\Delta_\varepsilon)  \le  \varepsilon^d \rho_d \beta_d  \le   \Vol(\Delta_{\sqrt 2\varepsilon})
\]
 where $\beta_d=\frac{\pi^{d/2}}{\Gamma(d/2+1)}$ is the volume of the unit $d$-ball.
 Using that 
\[
\Vol(\Delta_\kappa) =\kappa^d \frac{\sqrt{d+1}}{d!2^{d/2}}
\]
and the estimates $\Gamma(d/2+1)=2^{(1/2+o(1))\cdot d\log d}$ and $d!=2^{(1+o(1))\cdot d\log d}$, we obtain $\rho_d = 2^{-(1/2+o(1))\cdot d \log d}$.

 If it turned out that the answer to Question~\ref{q:msa} is affirmative, our
 approach (even simplified) for the proof of Theorem~\ref{t:bounds}(1) would
 yield a stronger bound 
 $\csup_d \leq 2^{-\alpha d \cdot \log d}$ for some $\alpha > 0$.

\begin{proof}[Proof of Theorem~\ref{t:msa}]
Let $\aa\bb$ be one of the longest edges of $\Delta$. Without loss of generality, at least
half of the remaining vertices of $\Delta$ are not farther from $\bb$ than from
$\aa$.
Let $\vv_1, \dots, \vv_k$ be such vertices ($k \geq (d-1)/2$) and let $\uu_1, \dots,
\uu_{\ell}$ be the remaining vertices, which are closer to $\aa$ than to $\bb$. We
observe that the angles $\vv_i\aa\bb$ are at most $60^\circ$ since
$\vv_i\bb$ is one of the shortest edges in the triangle $\aa\bb\vv_i$. We also
observe that the angles $\uu_i\aa\bb$ are at most $90^\circ$ since $\bb\uu_i$ is
at most as long as $\aa\bb$.

Let $h$ be the hyperplane perpendicular to $\aa\bb$ passing through $\aa$ and let
$h^+$ be the closed halfspace bounded by the hyperplane $h$ and containing $\bb$. 
Let $C$ be the cone with apex $\aa$ determined by $\Delta$. Fix a sufficiently small $\varepsilon>0$ such that the ball $B(\aa,\varepsilon)$ with center $\aa$ and radius $\varepsilon$ does not meet the
 hyperplane determined by the vertices of $\Delta$ other than $\vv$.
We need to determine what fraction of the ball $B(\aa,\varepsilon)$ belongs to $C$. Since all the angles $\vv_i\aa\bb$ and $\uu_i\aa\bb$ are at most $90^\circ$, it follows that $C$ is fully contained in $h^+$. 

Let $\kappa$ be the affine $(k+1)$-space determined by the face $\aa\bb\vv_1\dots \vv_k$ of $C$. 
Let $C^{\kappa}_{60}$ be the $(k+1)$-dimensional cone formed by all points $\xx$ in $\kappa$
such that the angle $\xx\aa\bb$ is at most $60^\circ$. From the discussion above it
follows that $C^\kappa_{60}$ contains all the vertices $\vv_i$, and consequently $C \cap \kappa \subseteq
C^\kappa_{60}$.
It is not too difficult to show that
\begin{equation}
\label{e:C60}
\frac{\Vol_{k+1}(B(\aa,\varepsilon) \cap C^\kappa_{60})}
{\Vol_{k+1}(B(\aa,\varepsilon)\cap \kappa)} 
\leq \left(\frac{\sqrt 3}2\right)^{k+1},
\end{equation}
where $\Vol_{k+1}$ is the $(k+1)$-dimensional volume
in $\kappa$. Indeed, the set $B(\aa,\varepsilon) \cap C^\kappa_{60}$ is contained in the
$(k+1)$-dimensional ball inside $\kappa$ of radius $\frac{\sqrt 3}2\varepsilon$; see the dashed circle in Figure~\ref{f:solid_kappa}, left.
(We have borrowed this idea from~\cite{racke06}, aiming at a reasonable
estimate without precise computation.)

\begin{figure}
\begin{center}
  \includegraphics{solid_kappa.eps}
\end{center}
\caption{Cuts through $\kappa$ and $\kappa'$}
\label{f:solid_kappa}
\end{figure}

Now we estimate $\Vol(B(\aa, \varepsilon) \cap C)/ \Vol(B(\aa, \varepsilon))$. Let
$\kappa'$ be an arbitrary $(k+1)$-space parallel to $\kappa$. 
Our goal is to show that
\begin{equation}
\label{e:vol_cut}
\frac{\Vol_{k+1}(B(\aa, \varepsilon) \cap C \cap
\kappa')}{\Vol_{k+1}(B(\aa, \varepsilon) \cap \kappa')} \leq
\left(\frac{\sqrt 3}2\right)^{k+1}.
\end{equation}
As soon as we show~\eqref{e:vol_cut} we get the same bound on $\Vol(B(\aa, \varepsilon)
\cap C)/ \Vol(B(\aa, \varepsilon))$ by the Fubini theorem.

For the rest of the proof we assume for simplicity that $\aa$ coincides
with the origin. 
In order to show~\eqref{e:vol_cut}, we first observe that $C \cap \kappa'$ is either
empty or it equals $(C \cap \kappa) + \yy$, where $\yy$ is the intersection
point of $\kappa'$ and the $\ell$-dimensional cone with apex $\aa$ determined by the points
$\uu_1, \dots, \uu_\ell$.
Thus,
in particular, $\yy \in h^+$ and $C \cap \kappa' \subseteq \yy +
C^\kappa_{60}$. See Figure~\ref{f:solid_kappa}, right.

The next step is to show that $\Vol_{k+1}(B(\aa, \varepsilon) \cap (\yy +
C^\kappa_{60})) \leq \Vol_{k+1}(B(\aa, \varepsilon) \cap (\zz + C^\kappa_{60}))$ where $\zz$
is the center of the $(k+1)$-dimensional ball $B(\aa, \varepsilon) \cap \kappa'$. See
Figure~\ref{f:solid_ineq}, left. We first shift $\yy$ to its orthogonal projection
$\yy'$ on $h$, observing that $\Vol_{k+1}(B(\aa, \varepsilon) \cap (\yy +
C^\kappa_{60})) \leq \Vol_{k+1}(B(\aa, \varepsilon) \cap (\yy' + C^\kappa_{60}))$. Then we bound $\Vol_{k+1}(B(\aa, \varepsilon) \cap (\yy' +
C^\kappa_{60}))$ by decomposing $B(\aa, \varepsilon) \cap (\yy' + C^\kappa_{60})$ into two
parts as in the middle and the right part of Figure~\ref{f:solid_ineq}.

\begin{figure}
\begin{center}
  \includegraphics{solid_ineq.eps}
\end{center}
\caption{Bounding the volume of $B(\aa, \varepsilon) \cap (\yy +
C^\kappa_{60})$}
\label{f:solid_ineq}
\end{figure}

Analogously to~\eqref{e:C60}, we have $\Vol_{k+1}(B(\aa, \varepsilon) \cap (\zz +
C^\kappa_{60}))/\Vol_{k+1}(B(\aa, \varepsilon) \cap \kappa') \leq \left(\frac{\sqrt
3}2\right)^{k+1}$. This finishes the proof of~\eqref{e:vol_cut}. Then we get the final bound
\[
\msa(\Delta) \le \left(\frac{\sqrt 3}2\right)^{(d+1)/2} \le (0.931)^d.
\] 
\end{proof}
}
\fi

\section{Corner regions}
\label{s:corner}
In this section we describe a geometric structure we are essentially looking for
in order to prove Theorem~\ref{t:bounds}.

Although Theorem~\ref{t:pach} does not assume any kind of general position,
we will need general position in our intermediate steps.
We work with arrangements of $d + 1$ hyperplanes. We say that such an
arrangement is in \emph{general position} if the normal vectors of arbitrary $d$
hyperplanes from the arrangement are linearly independent 
(in particular each $d$ of the hyperplanes have a single point in common) 
and if the intersection of all $d + 1$ of the hyperplanes is empty.

Let $\H = (H_1, \dots, H_{d+1})$ be an arrangement of hyperplanes in $\R^d$ in
general position.
For $i \in
[d+1]$ let
$\hh_i$ denote the intersection point of all hyperplanes from $\H$ but $H_i$.
It is easy to see that the arrangement $\H$ has exactly one bounded
component, namely the simplex with vertices $\hh_i$. We denote this
simplex by $\Delta(\H)$. We also denote by $H_i^+$ and $H_i^-$ the two closed
subspaces determined by $H_i$ in such a way that $H_i^-$ contains $\Delta(\H)$.
Finally, we define the \emph{corner regions} $C_i = C_i(\H)$ by setting
\[
C_i \mathrel{\mathop:}= \bigcap\limits_{j \in [d+1]\setminus \{i\}} H_j^+.
\] 
Note that each $C_i$ is
a cone with apex $\hh_i$; see Figure~\ref{f:cr}, left.

\begin{figure}
\begin{center}
  \includegraphics{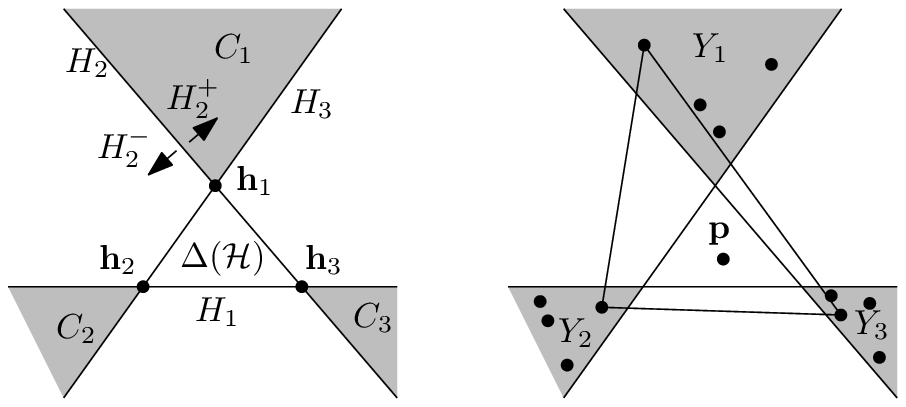}
\end{center}
\caption{Corner regions of an arrangement of $d + 1$ hyperplanes}
\label{f:cr}
\end{figure}

The following separation lemma captures the core idea of our approach. Given a
$k$-tuple $(S_1, \dots, S_k)$ of subsets of $\R^d$, by $\widehat S_i$ we mean the set $S_1
\cup \cdots \cup S_{i-1} \cup S_{i+1} \cup \cdots \cup S_k$, for any $i \in
[k]$. The interior of a set $S \subseteq \R^d$ is denoted by $\interior (S)$.

\begin{lemma}
  \label{l:separate}
  Let $\pp$ be a point in $\R^d$, let $\H$ be an arrangement of $d+1$ hyperplanes in
  general position in $\R^d$ and let $Y_1, \dots, Y_{d+1}$ be finite nonempty subsets of $\R^d$
 such that $H_i$ strictly separates $\pp$ from $\widehat Y_i$ for every $i \in [d+1]$ (in particular, $\pp$ does not belong to
  any $H_i$). Then either
\begin{itemize}
  \item[$\bullet$] $\pp \in \Delta(\H)$ and $Y_i \subseteq \interior (C_i)$ for any $i \in
    [d+1]$; or
  \item[$\bullet$] $\pp \not\in \Delta(\H)$ and there is a hyperplane strictly separating $\pp$
    from $Y_1 \cup \cdots \cup Y_{d+1}$; see Figure~\ref{f:sl}.
\end{itemize}
\end{lemma}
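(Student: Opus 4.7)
The plan is to work along the dichotomy already built into the conclusion.

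For the easy case $\pp \in \Delta(\H)$, I would reason as follows. The strict separation hypothesis forces $\pp \in \interior(H_i^-)$ and therefore $\widehat Y_i \subseteq \interior(H_i^+)$ for every $i \in [d+1]$. Fixing $j$ and noting that $Y_j \subseteq \widehat Y_i$ whenever $i \neq j$, intersection over such $i$ yields $Y_j \subseteq \bigcap_{i \neq j} \interior(H_i^+) = \interior(C_j)$, as required.

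For the substantive case $\pp \notin \Delta(\H)$, my plan is to exhibit the separating hyperplane explicitly via a positive linear dependence among the outward normals. Choose affine forms $f_i$ with $H_i = \{f_i = 0\}$ and $H_i^- = \{f_i \leq 0\}$, and let $\nn_i$ be the outward normal of $H_i^-$. Because $\Delta(\H)$ is a non-empty bounded simplex, the normals $\nn_1, \ldots, \nn_{d+1}$ positively span $\R^d$; in general position this forces the (up to scaling unique) linear dependence $\sum_i \alpha_i \nn_i = 0$ to satisfy $\alpha_i > 0$ for every $i$. Consequently $\sum_i \alpha_i f_i$ is a constant function, equal to $-c$ for some $c$, and evaluation at any interior point of $\Delta(\H)$ confirms $c > 0$.

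Now set $K = \{i : f_i(\pp) > 0\}$ and $L = [d+1] \setminus K$. The assumption $\pp \notin \Delta(\H)$ gives $K \neq \emptyset$, and $L \neq \emptyset$ as well (otherwise every $f_i(\pp)$ would be positive, forcing $\sum \alpha_i f_i(\pp) > 0$ and contradicting the constancy identity $-c < 0$). My candidate separator is the affine functional
\[
F(\xx) \mathrel{\mathop:}= -\sum_{i \in K} \alpha_i f_i(\xx) = \sum_{i \in L} \alpha_i f_i(\xx) + c,
\]
where the two expressions are equal by the constancy identity. The value $F(\pp)$ is negative term by term in the first form. For $y \in Y_j$ I split on $j$: if $j \in K$, then for every $i \in L$ we have $i \neq j$, hence $y \in \widehat Y_i$; since $f_i(\pp) < 0$ because $i \in L$, strict separation forces $f_i(y) > 0$, and the second form gives $F(y) > c > 0$. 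If $j \in L$, the first form works symmetrically: for every $i \in K$ we get $f_i(y) < 0$, so $F(y) > 0$. Hence $\{F = 0\}$ (after a tiny perturbation of the intercept, if one insists on strictness on both sides) strictly separates $\pp$ from $Y_1 \cup \cdots \cup Y_{d+1}$.

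The main obstacle is the step that distills the positive linear dependence $\sum_i \alpha_i \nn_i = 0$ together with the constant $c > 0$ from the boundedness of $\Delta(\H)$; once this algebraic identity is in hand, the separating functional essentially writes itself and the remaining case analysis is mechanical, using only the hypothesis that $H_i$ strictly separates $\pp$ from $\widehat Y_i$.
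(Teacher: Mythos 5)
Your argument is correct, and it takes a genuinely different route from the paper's proof. The paper translates so that $\pp$ is the origin, takes $K$ to be the convex hull of the unit normals $\uu_i$ pointing away from $\pp$, and case-splits on whether $\OO \in K$; in the non-containment case it defines cones $Z_i$ of directions whose orthogonal hyperplanes cut off $Y_i$, checks that every $d$ of the $Z_i$ already meet inside $K$ (hence on the separating hyperplane for $K$), and invokes Helly's theorem in a $(d-1)$-dimensional slice to get a common direction for all $d+1$. You instead case-split directly on whether $\pp \in \Delta(\H)$, and in the hard case exploit the unique (up to scale) linear dependence $\sum_i \alpha_i \nn_i = \OO$ among the $d+1$ outward normals, whose coefficients are all positive because $\Delta(\H)$ is bounded and the arrangement is in general position; from this you build the separating affine form $F = -\sum_{i\in K}\alpha_i f_i = \sum_{i\in L}\alpha_i f_i + c$ explicitly and check signs. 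Your approach is more constructive (it writes the separator down rather than proving existence via Helly) and avoids Helly altogether; the paper's argument is perhaps more in keeping with the polar-cone machinery it uses elsewhere. The one step you flag as "the main obstacle" — that boundedness of $\Delta(\H)$ plus general position forces the unique dependence to be strictly positive — is indeed where the geometric content sits, but the argument you sketch (positive spanning of the $\nn_i$, Carath\'eodory for cones, plus all coefficients nonzero by general position) closes it cleanly. Two minor points: the final parenthetical about perturbing the intercept is unnecessary, since $F(\pp)<0<F(y)$ already gives strict separation; and one should note that $F$ is non-constant because $\emptyset \neq K \subsetneq [d+1]$ and any $d$ of the $\nn_i$ are linearly independent, so $-\sum_{i\in K}\alpha_i\nn_i \neq \OO$.
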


\begin{figure}
\begin{center}
  \includegraphics{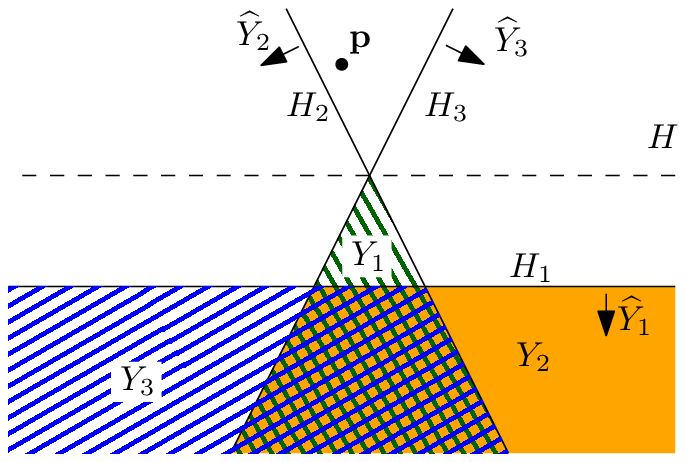}
\end{center}
\caption{Illustration for Lemma~\ref{l:separate}. In this case, the point $\pp$ belongs to
  $H_1^- \cap H_2^+ \cap H_3^+$. 
The regions of $\H$ where the sets $Y_1$, $Y_2$ and $Y_3$ may appear are shaded or striped}
\label{f:sl}
\end{figure}

\begin{proof}
Without loss of generality, we assume that $\mathbf{p}=\mathbf{0}$. For $i\in [d+1]$, let $\mathbf{u}_i$ be the unit vector normal to $H_i$ so that $H_i=\{\mathbf{x}\in \mathbb{R}^d; \, \mathbf{x}\cdot \mathbf{u}_i=c_i\}$ for some $c_i>0$. Let $K=\conv\{\mathbf{u}_1,\mathbf{u}_2,\dots,\mathbf{u}_{d+1}\}$.

By the hyperplane separation theorem for $\mathbf{p}$ and $K$, either $\mathbf{p}\in K$ or $\mathbf{p}$ is strictly separated from $K$ by a hyperplane.

If $\mathbf{p}\in K$, then $\mathbf{p}$ is in the interior of $K$, since the hyperplanes $H_i$ are in general position.
It follows that the intersection of the half-spaces $\{\mathbf{x}\in \mathbb{R}^d; \, \mathbf{x}\cdot \mathbf{u}_i\le c_i\}$ is bounded, thus $\pp$ belongs to $\Delta(\H)$ according to our definitions. Given $i, j \in [d+1]$ such that $i \neq j$,
 we get $Y_i \subseteq \interior (H_j^+)$ since $H_j$ separates $\pp$ and $\widehat Y_j$. For every fixed $i$, the previous inclusions imply that $Y_i \subseteq \interior (C_i)$.

Now suppose that $\mathbf{p}$ is strictly separated from $K$ by a hyperplane $H$.
For $i\in [d+1]$, let $Z_i\mathrel{\mathop:}= \{\mathbf{x}\in \mathbb{R}^d; \, \mathbf{x}\cdot \mathbf{y}_i>0 \hbox{ for all }
\mathbf{y}_i \in Y_i\}$. The set $Z_i$ is an open convex cone and consists of all vectors $\mathbf{x}$ such that for some $c'>0$, the hyperplane $\{\mathbf{y}\in \mathbb{R}^d; \, \mathbf{y}\cdot \mathbf{x}=c'\}$ strictly separates $\mathbf{p}$ from $Y_i$. By the assumption, every $d$-tuple of the cones $Z_i$ contains a common point in $K$, and thus it also contains a common point in $H$. By Helly's theorem for the intersections $Z_i\cap H$, we conclude that $H\cap Z_1 \cap Z_2 \cap \dots \cap Z_{d+1}$ is nonempty, and the lemma follows.
%
\end{proof}

For the proof of part (2) of Theorem~\ref{t:bounds} we need to verify an intuitively
obvious fact, that if we pick one point in each of the corner regions of an
arrangement $\H$ of $d+1$ hyperplanes, then the simplex formed by these points
covers $\Delta(\H)$.

This fact is not needed for part (1) of Theorem~\ref{t:bounds}.

\begin{lemma}
  \label{l:in_simplex}
 Let $\H = (H_1, \dots, H_{d+1})$ be an arrangement of $d+1$ hyperplanes in
 general position in $\R^d$. Let $\pp$ be a point in $\Delta(\H)$ and $\yy_1, \dots
 \yy_{d+1}$ be points in $\R^d$ such that $\yy_i \in C_i$ for any $i \in
 [d+1]$. Then $\pp$ belongs to the simplex determined by $\yy_1, \dots,
 \yy_{d+1}$.
\end{lemma}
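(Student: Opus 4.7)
The plan is to argue by contradiction using hyperplane separation. Suppose $\pp \notin \conv\{\yy_1, \dots, \yy_{d+1}\}$. Since this set is compact and convex, the separation theorem produces a vector $\cc \in \R^d$ with $\cc \cdot \pp < \cc \cdot \yy_i$ for every $i \in [d+1]$; the plan is then to contradict this inequality for a carefully chosen index $i^*$.

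The key preparatory step is the explicit description
$$C_i = \Bigl\{ \hh_i + \sum_{j \neq i} \mu_{ij}(\hh_i - \hh_j) : \mu_{ij} \geq 0 \Bigr\}.$$
This holds because $C_i$, as an intersection of $d$ closed halfspaces in general position, is a simplicial cone with apex $\hh_i$ (since $\hh_i$ lies on every $H_j$ with $j \neq i$) bounded by the hyperplanes $H_j$, $j \neq i$. The $d$ extreme rays of $C_i$ are the intersections of $d-1$ of these facet hyperplanes; each such intersection is the line through $\hh_i$ and some $\hh_j$, and since $\hh_j \in \Delta(\H) \subseteq H_j^-$ while $C_i \subseteq H_j^+$, the ray of $C_i$ along this line must point in the direction $\hh_i - \hh_j$.

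Given this description, let $i^*$ minimize $\cc \cdot \hh_i$ over $i \in [d+1]$, so that $\cc \cdot (\hh_{i^*} - \hh_j) \leq 0$ for every $j$. Writing $\yy_{i^*} = \hh_{i^*} + \sum_{j \neq i^*} \mu_{i^* j}(\hh_{i^*} - \hh_j)$ with nonnegative coefficients and pairing with $\cc$ gives $\cc \cdot \yy_{i^*} \leq \cc \cdot \hh_{i^*}$. On the other hand, writing $\pp = \sum_k \alpha_k \hh_k$ in barycentric coordinates with $\alpha_k \geq 0$ and $\sum_k \alpha_k = 1$ yields $\cc \cdot \pp \geq \min_k (\cc \cdot \hh_k) = \cc \cdot \hh_{i^*}$. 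Chaining the inequalities produces
$$\cc \cdot \yy_{i^*} \leq \cc \cdot \hh_{i^*} \leq \cc \cdot \pp < \cc \cdot \yy_{i^*},$$
a contradiction, so $\pp$ must lie in $\conv\{\yy_1, \dots, \yy_{d+1}\}$ after all.

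I expect the only mildly delicate step to be the cone description of $C_i$; once that is in hand, the separation argument is essentially immediate. Equivalently, the whole argument can be reformulated in barycentric coordinates with respect to $\Delta(\H)$: the condition $\yy_i \in C_i$ translates to the $j$-th barycentric coordinate of $\yy_i$ being at most $0$ for every $j \neq i$ (so the $i$-th coordinate is at least $1$), while $\pp \in \Delta(\H)$ means all its coordinates lie in $[0,1]$, and the separating functional then produces the same one-line numerical contradiction.
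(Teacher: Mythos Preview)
Your proof is correct and takes a genuinely different route from the paper's. The paper proceeds by induction on $d$: it reduces to showing that each vertex $\hh_i$ lies in $\conv\{\yy_1,\dots,\yy_{d+1}\}$, then intersects everything with the hyperplane $H_i$ to obtain a $(d-1)$-dimensional arrangement and applies the inductive hypothesis there. Your argument, by contrast, is a direct linear-functional (separation) argument: after writing down the extreme-ray description of $C_i$, you pick the vertex $\hh_{i^*}$ minimizing the separating functional and read off the contradiction in one line. This is shorter and more elementary, and it avoids the inductive bookkeeping entirely; it is essentially the observation that the cones $-C_i + \hh_i$ are exactly the normal cones of $\Delta(\H)$ at its vertices (compare the paper's Lemma~\ref{l:polars_cover}), which immediately forces any linear functional to attain its minimum over $C_{i^*}$ at $\hh_{i^*}$ for the right $i^*$. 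The paper's inductive proof, on the other hand, yields slightly more: it actually shows that each vertex $\hh_i$ (not just $\pp$) lies in $\conv\{\yy_1,\dots,\yy_{d+1}\}$, and it does not need to invoke the separation theorem. Your verification of the cone description $C_i = \{\hh_i + \sum_{j\neq i}\mu_{ij}(\hh_i-\hh_j):\mu_{ij}\ge 0\}$ is correct, and so is the barycentric reformulation you mention at the end.
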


\begin{proof}
We prove the lemma by induction on $d$. For $d=1$ the proof is obvious.
Now assume that $d > 1$.

\begin{figure}
\begin{center}
  \includegraphics{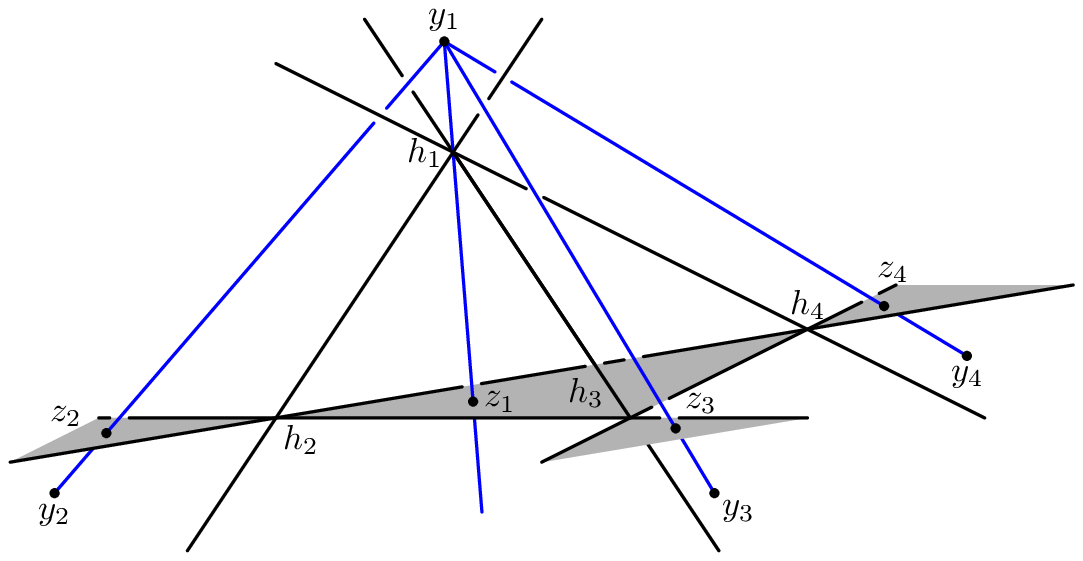}
\end{center}
\caption{The case $d=3$}
\label{f:inside}
\end{figure}

 We recall that each $C_i$ is a cone with apex $\hh_i$. Since $\pp$ is a convex
  combination of the points $\hh_i$, it is sufficient to show that
each $\hh_i$ belongs to the simplex determined by $\yy_1, \ldots, \yy_{d+1}$.

We fix $i$ and consider points
$\zz_i \mathrel{\mathop:}= \overline{\hh_i\yy_i} \cap H_i$ and $\zz_j \mathrel{\mathop:}= \overline{\yy_i\yy_j} \cap H_i$ for $j \in [d+1]\setminus \{i\}$,
where $\overline{\aa\bb}$ is the line spanned by points $\aa$
and $\bb$. See Figure \ref{f:inside}.

We claim that $\zz_j$, for $j \neq i$, belongs to $C_j'\mathrel{\mathop:}= C_j \cap H_i$, which is the corner region with apex $\hh_j$ in the induced arrangement $\H'\mathrel{\mathop:}= (\H \setminus
\{H_i\}) \bigcap H_i$ of $d$ hyperplanes in $H_i \simeq \R^{d-1}$. Indeed, since $ \yy_j \in C_j = \bigcap_{k \in [d+1]\setminus \{j\}} H_k^+$
we have $\overline{\yy_i\yy_j} \subseteq \bigcap_{k \in [d+1]\setminus
\{i,j\}}H_k^+$, thus $\zz_j \in H_i \cap \big(
\bigcap_{k \in [d+1] \setminus \{i, j\}} H_k^+\big)$, which is by definition the corner region $C_j'$. We also observe that $\zz_i \in \Delta(\H')$. Therefore, by induction, $\zz_i$ is in the convex hull of the points
$\zz_j$ (for $j \in [d+1] \setminus \{i\}$). Since all these points $\zz_j$ are
by definition convex combinations of the points $\yy_1, \dots, \yy_{d+1}$ and
since $\hh_i$ is a convex combination of $\yy_i$ and $\zz_i$
we deduce that $\hh_i$ is in the simplex determined by $\yy_1, \dots, \yy_{d+1}$ as required.
\end{proof}


\section{Upper bound}
\label{s:ub}
The goal of this section is to give an exponentially decreasing upper bound on
$\csup_d$. As we sketched in the introduction, we set $X_i$, for $i \in [d+1]$,
to be a set of $n$ points uniformly distributed in the unit $d$-ball $B^d$.
We will explain later what we mean exactly by a uniform distribution.
The idea is that if $A$ is a `sufficiently nice' subset of $B^d$ then $\frac{\Vol(A)}{\Vol(B^d)}$ is
approximately equal to $\frac{|X_i \cap A|}{|X_i|}$.

By a \emph{generic Pach's configuration} we mean a collection $(Y_1,\dots,
Y_{d+1}, \pp)$ of $d+1$ finite pairwise disjoint nonempty sets $Y_i$ and a point $\pp$
not belonging to any $Y_i$ such that the set $Y_1 \cup \cdots \cup Y_{d+1} \cup
\{\pp\}$ is in general position and $\pp$ belongs to all $(Y_1, \dots,
Y_{d+1})$-simplices.

Note that if we consider $(Y_1, \dots, Y_{d+1}, \pp)$ as the output of
Theorem~\ref{t:pach} we need not obtain a generic Pach's configuration even if
$X\mathrel{\mathop:}= X_1 \cup \dots \cup X_{d+1}$ is in general position, since the point $\pp$ might
be on some of the hyperplanes determined by $X$. In such case, forgetting few
points only, we can still get a generic Pach's configuration; this is shown in Lemma~\ref{l:shrinkY}.

In Lemma~\ref{l:shrinkY} we require a stronger notion of general
position, which generalizes the following situation in the
plane. Let $X$ be a set in $\R^2$ and let $\ell_1 = \aa_1\bb_1$, $\ell_2 = \aa_2\bb_2$, and $\ell_3 =
\aa_3\bb_3$ be three lines in the plane determined by six distinct points of
$X$. Then we require that these three lines do not meet in a point.

In general, we say that a set $X$ of points in $\R^d$ satisfies \emph{condition (G)} if 

\begin{enumerate}
\item[1)] $X$ is in general position, and 
\item[2)] whenever $X_1, \dots, X_{d+1}$ are pairwise disjoint subsets of $X$, each of size at most $d$,
  then $\aff(X_1) \cap \cdots \cap \aff(X_{d+1})=\emptyset$. Here $\aff(X_i)$ denotes the affine hull of $X_i$. 
\end{enumerate}

For every set $X'$ that does not satisfy (G), we may obtain a set satisfying (G) by an arbitrarily small
perturbation of points in $X'$.

\begin{lemma}
  \label{l:shrinkY}
  Let $Y'_1, \dots, Y'_{d+1}$ be $d + 1$ finite pairwise disjoint sets of
  size at least $d + 1$ such
  that $Y'_1 \cup \cdots \cup Y'_{d+1}$ satisfies condition (G). Let $\pp'$ be a
  point contained in all $(Y'_1, \dots, Y'_{d+1})$-simplices. Then there are
  subsets $Y_i \subseteq Y'_i$ for $i \in [d+1]$ such that $|Y_i| \geq |Y'_i| -
  d$, and a point $\pp \in \R^d$ such that $(Y_1, \dots, Y_{d+1}, \pp)$ is a generic Pach's configuration.
\end{lemma}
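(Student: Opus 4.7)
The plan is to construct $Y_i$ by deleting from $Y'_i$ a carefully chosen set $V \subseteq \bigcup_i Y'_i$ with $|V \cap Y'_i| \leq d$ for every $i$, and to take $\pp$ to be a small generic perturbation of $\pp'$. I will call a hyperplane \emph{bad} if it passes through $\pp'$ and is spanned by $d$ points of $\bigcup_i Y'_i$, and of \emph{facet-type} if in addition these $d$ spanning points lie one in each of $d$ distinct classes $Y'_i$ (necessarily missing one class). The set $V$ will hit the spanning set of every facet-type bad hyperplane; the remaining (non-facet-type) bad hyperplanes will be dealt with by the genericity of the perturbation.

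To build $V$, I would form the $d$-uniform hypergraph on $\bigcup_i Y'_i$ whose edges are the spanning sets of facet-type bad hyperplanes. Since $\pp'$ lies in all of these hyperplanes, condition (G) forbids $d+1$ pairwise disjoint edges, so the matching number of the hypergraph is at most $d$. Pick a maximal matching $M$, chosen to contain an edge $h^*$ that has $\pp'$ as one of its spanning points whenever $\pp' \in Y'_{i_0}$ for some $i_0$. Such $h^*$ exists: combine $\pp'$ with one point chosen from each of any $d-1$ classes $Y'_j$, $j \neq i_0$ (possible since $|Y'_j| \geq d+1 \geq 1$ and (G) keeps the resulting $d$ points affinely independent). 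Set $V \mathrel{\mathop:}= \bigcup_{S \in M} S$. The key observation is that each edge $S \in M$ meets every class $Y'_i$ in at most one point (being facet-type), and the edges of $M$ are pairwise disjoint, so $|V \cap Y'_i| \leq |M| \leq d$. By maximality of $M$, the set $V$ meets the spanning set of every facet-type bad hyperplane, and $\pp' \in V$ whenever $\pp' \in \bigcup_i Y'_i$.

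Now set $Y_i \mathrel{\mathop:}= Y'_i \setminus V$, so $|Y_i| \geq |Y'_i| - d$ and $\pp' \notin \bigcup_i Y_i$, and set $\pp \mathrel{\mathop:}= \pp' + \varepsilon v$ for $\varepsilon > 0$ sufficiently small and $v$ a generic direction. For such $\varepsilon, v$ the point $\pp$ is distinct from every point of $\bigcup_i Y_i$ and avoids every hyperplane spanned by $d$ points of $\bigcup_i Y_i$; combined with the general position of $\bigcup_i Y_i$ inherited from $\bigcup_i Y'_i$, this yields general position of $\bigcup_i Y_i \cup \{\pp\}$. Any $(Y_1,\dots,Y_{d+1})$-simplex $\sigma$ is also a $(Y'_1,\dots,Y'_{d+1})$-simplex, hence contains $\pp'$; and if $\pp'$ were on the boundary of $\sigma$, it would lie on some facet of $\sigma$ spanned by $d$ points of $\bigcup_i Y_i$, one in each of $d$ distinct classes---i.e., on a facet-type bad hyperplane whose spanning set is disjoint from $V$, contradicting the maximality of $M$. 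Hence $\pp'$ lies in the interior of $\sigma$, and so does $\pp$ for $\varepsilon$ small enough.

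The main subtlety, which this plan is designed to bypass, is that a naive hitting-set argument applied to \emph{all} bad hyperplanes might concentrate too many deletions in a single class $Y'_i$. The crucial observation is twofold: restrict attention to facet-type bad hyperplanes (non-facet-type ones are knocked out by the generic perturbation of $\pp'$) and exploit that their spanning sets are ``rainbow'', so that a matching-based hitting set automatically contributes at most one deletion per class per matched edge, giving the balanced bound $|V \cap Y'_i| \leq |M| \leq d$.
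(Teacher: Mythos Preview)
Your proof is correct and follows essentially the same approach as the paper: use condition~(G) to bound by $d$ the size of a maximal pairwise-disjoint family of ``bad'' rainbow configurations through $\pp'$, delete their vertices (at most $d$ per colour class), and perturb $\pp'$ generically. The only cosmetic difference is that you work directly with the size-$d$ facet spanning sets, whereas the paper selects full bad $(Y'_1,\dots,Y'_{d+1})$-simplices and then passes to a proper face containing $\pp'$ before invoking~(G); your explicit handling of the case $\pp'\in\bigcup_i Y'_i$ is a nice touch that the paper leaves implicit.
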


\begin{remark}
  Condition~(G) is set up in such a way that the proof of Lemma~\ref{l:shrinkY}
  is simpler. Another approach would be to assume only the (standard and more
  intuitive) 
  general position instead of condition~(G). This would, however, yield a more complicated proof of
  Lemma~\ref{l:shrinkY} with a worse bound $|Y_i| \geq |Y'_i| - f(d)$, where
  we could achieve $f(d)$ to be slightly less than $2^d$. However, any function of
  $d$ would be fully sufficient for our needs.
\end{remark}

\begin{proof}
  Let $\Delta_1, \dots, \Delta_k$ be a maximal collection of $(Y'_1, \dots,
  Y'_{d+1})$-simplices such that $\pp'$ is on the boundary of each $\Delta_i$
  for $i \in [k]$ and any two simplices of this collection have disjoint
  vertex sets.

  Let $F_i$ be the set of vertices of a proper face of $\Delta_i$ containing $\pp'$. Since $\pp'\in\aff(F_1) \cap \cdots \cap \aff(F_k)$, condition $(G)$ implies that $k \le d$.

  Now, we remove all vertices of $\Delta_1, \dots, \Delta_k$ from each $Y'_i$, obtaining sets
  $Y_i$, removing at most $d$ points from each $Y'_i$. Then $\pp'$ is in the
  interior of all $(Y_1, \dots, Y_{d+1})$-simplices due to the maximality of
  $\Delta_1, \dots, \Delta_k$. By a small perturbation of $\pp'$ we get a point
  $\pp$ still in the interior of all $(Y_1, \dots, Y_{d+1})$-simplices. Then
  $(Y_1, \dots, Y_{d+1}, \pp)$ is the required generic Pach's configuration.
\end{proof}

The main idea of our proof of the upper bound is that if $(Y_1, \dots, Y_{d+1}, \pp)$ is a
generic Pach's configuration in the unit ball $B^d$, 
then some $Y_i$ is contained in a tiny part
of the ball. 
By $\beta_d$ we denote the volume of the unit ball $B^d$.

\begin{proposition}
\label{p:in_tiny_part}
  Let $(Y_1, \dots, Y_{d+1}, \pp)$ be a generic Pach's configuration such that
  $Y_1 \cup \cdots \cup Y_{d+1} \cup \{\pp\}$ is a subset of $B^d$.
Then
    there is an arrangement of hyperplanes $\H = (H_1, \dots, H_{d+1})$ in
    general position such that each $Y_i$ belongs to the corner region $C_i = C_i(\H)$
    (see the definitions in Section~\ref{s:corner}). The smallest of the
    volumes $\Vol(C_i \cap B^d)$ is at most $2^d \msa(\Delta(\H))
    \beta_d$ (we recall that $\msa$ denotes the minimum solid angle).
\end{proposition}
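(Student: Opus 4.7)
The plan is to find, for each $i \in [d+1]$, a hyperplane $H_i$ that strictly separates $\pp$ from $\widehat Y_i$, and then invoke Lemma~\ref{l:separate}. Since $\pp$ lies in some rainbow simplex, $\pp \in \conv(Y_1 \cup \cdots \cup Y_{d+1})$, which rules out the second alternative of the lemma; hence $\pp \in \Delta(\H)$ and $Y_i \subseteq \interior(C_i)$ for every $i$. I expect the main obstacle to be verifying the separation hypothesis itself, i.e.\ that $\pp \notin \conv(\widehat Y_i)$ for every $i$.

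For the separation claim, I argue by contradiction. If $\pp \in \conv(\widehat Y_i)$, then by Carath\'{e}odory together with general position (which forbids $\pp$ from lying on any hyperplane spanned by $d$ of the colored points), $\pp$ is a strict convex combination $\pp = \sum_{k=1}^{d+1} \lambda_k z_k$ of $d+1$ points of $\widehat Y_i$ with every $\lambda_k > 0$. Since $\widehat Y_i$ uses only $d$ colors, pigeonhole forces $z_1, z_2 \in Y_j$ for some $j \neq i$. Pick any $y_i \in Y_i$; then $(y_i, z_1, z_3, \dots, z_{d+1})$ and $(y_i, z_2, z_3, \dots, z_{d+1})$ are rainbow simplices both containing $\pp$ in their interior. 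Project $\R^d$ onto the 2-dimensional quotient by the $(d-2)$-dimensional direction space of $\aff(z_3, \dots, z_{d+1})$; the images $\bar z_3, \dots, \bar z_{d+1}$ all coincide at a single point $w$, while general position of $Y_1 \cup \cdots \cup Y_{d+1} \cup \{\pp\}$ guarantees that $\bar y_i, \bar z_1, \bar z_2, \bar\pp$ are distinct from $w$ and that each of the triples $(w, \bar y_i, \bar z_k)$ for $k=1,2$ and $(w, \bar z_1, \bar z_2)$ spans a non-degenerate triangle. Examining polar angles around $w$, the two rainbow containments force $\theta_{\bar\pp}$ to lie strictly between $\theta_{\bar y_i}$ and $\theta_{\bar z_k}$ for both $k=1, 2$, while $\pp \in \conv(z_1, \dots, z_{d+1})$ forces $\theta_{\bar\pp}$ to lie strictly between $\theta_{\bar z_1}$ and $\theta_{\bar z_2}$. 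These three constraints are jointly inconsistent.

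Given the separation, for each $i$ pick a hyperplane strictly separating $\pp$ from $\widehat Y_i$ and translate it parallel to itself toward $\pp$ to obtain $H_i$; strict separation persists under such a translation, and as the offsets shrink, the $d \times d$ linear systems defining the vertices $\hh_i = \bigcap_{j \neq i} H_j$ have solutions tending to $\pp$. Because $\pp$ is a strict convex combination of affinely independent points of $B^d$ (namely the vertices of some rainbow simplex), Jensen's inequality applied to $\|\cdot\|^2$ yields $\|\pp\| < 1$, so $\pp$ lies in the interior of $B^d$; choosing the translations small enough, and perturbing infinitesimally if needed to secure general position of $\H$, we ensure $\hh_i \in B^d$ for every $i$. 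Finally, the tangent cone to $C_i$ at $\hh_i$ is the image of the tangent cone to $\Delta(\H)$ at $\hh_i$ under the central reflection $\xx \mapsto 2\hh_i - \xx$, so the two have equal solid angle. Since $\hh_i \in B^d$, the triangle inequality gives $B^d \subseteq B(\hh_i, 2)$, hence
\[
\Vol(C_i \cap B^d) \le \Vol\bigl(C_i \cap B(\hh_i, 2)\bigr) = 2^d\, \sa(\hh_i;\, \Delta(\H))\, \beta_d.
\]
Picking $i^*$ with $\sa(\hh_{i^*}; \Delta(\H)) = \msa(\Delta(\H))$ yields the desired bound on $\min_i \Vol(C_i \cap B^d)$.
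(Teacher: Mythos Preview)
Your overall plan matches the paper's, and your volume estimate is correct---indeed a slight variant: you force all the vertices $\hh_i$ into $B^d$ by shrinking the offsets of the separating hyperplanes toward $\pp$, whereas the paper leaves the vertex $\hh_\ell$ alone and instead translates the $d$ facet hyperplanes of $C_\ell$ so that they pass through $\pp$; either way one ends up using $B^d\subseteq B(\text{apex},2)$ with an apex inside $B^d$ to pick up the factor $2^d$.

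The separation argument, however, has a genuine gap. After Carath\'{e}odory you write $\pp=\sum_{k=1}^{d+1}\lambda_k z_k$ with all $\lambda_k>0$ and $z_k\in\widehat Y_i$, and pigeonhole gives two of the $z_k$'s the same colour $j$, say $z_1,z_2\in Y_j$. You then assert that $(y_i,z_1,z_3,\dots,z_{d+1})$ and $(y_i,z_2,z_3,\dots,z_{d+1})$ are \emph{rainbow} simplices, so that the Pach property places $\pp$ in their interiors. But nothing guarantees that $z_3,\dots,z_{d+1}$ hit each of the remaining $d-1$ colours exactly once: with $d=3$ and $i=4$, for instance, one can have $z_1,z_2\in Y_1$ and $z_3,z_4\in Y_2$, and then neither tuple is rainbow. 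In that situation you have no control over whether $\pp$ lies in those two simplices, and the polar-angle contradiction collapses. The paper's argument (Lemma~\ref{lemma_generic_strict_separation}) sidesteps this entirely by first grouping the representation of $\pp$ by colour---writing $\pp$ as a convex combination of points $\zz_j\in\conv(Y_j)$, $j\neq i$---then passing a hyperplane $H$ through the $\zz_j$'s (hence through $\pp$), and finally picking one $\yy_j^+\in Y_j$ on a fixed side $H^+$ for every $j$ together with any $\yy_i^+\in Y_i$ on that side; this produces a genuine rainbow simplex contained in $H^+$, which cannot have $\pp\in H$ in its interior.
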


\ifobsolete
\gray{
\begin{oproposition}
\label{p:in_tiny_part_obsolete}
  Let $(Y_1, \dots, Y_{d+1}, \pp)$ be a generic Pach's configuration such that
  $Y_1 \cup \cdots \cup Y_{d+1} \cup \{\pp\}$ is a subset of $B^d$.
  Let $\alpha$ be the distance of $\pp$ and $\OO$. Then

  \begin{enumerate}
  \item[{\rm (i)}]
  there is $\ell \in [d + 1]$ such that $Y_\ell$ is contained in a
  solid cap cut out of $B^d$ of volume at most $(1 - \alpha^2)^{d/2} \beta_d$; and
  \item[{\rm (ii)}]
    there is an arrangement of hyperplanes $\H = (H_1, \dots, H_{d+1})$ in
    general position such that each $Y_i$ belongs to the corner region $C_i = C_i(\H)$
    (see the definitions in Section~\ref{s:corner}). The smallest of the
    volumes $\Vol(C_i \cap B^d)$ is at most $(1+\alpha)^d \msa(\Delta(\H))
    \beta_d$ (we recall that
    $\msa$ denotes the minimum solid angle).
  \end{enumerate}
\end{oproposition}
}

\gray{
For a proof we need the following two properties of generic Pach's configurations. 

\begin{olemma}
\label{l:cut_hyp_obsolete}
  Let $(Y_1, \dots, Y_{d+1}, \pp)$ be a generic Pach's configuration. Let $H$
  be any hyperplane passing through $\pp$. Then for any of the two open
  halfspaces determined by $H$ there is $\ell \in [d+1]$ such that $Y_\ell$ is fully
  contained in that halfspace.
\end{olemma}

\begin{proof}
  Let $H^+$ be the closed halfspace opposite to the open halfspace in which we
  look for $Y_\ell$. Suppose for contradiction that each $Y_i$ meets $H^+$. Let $\yy_i$ be a point from $Y_i \cap H^+$, for every $i \in [d+1]$.
  Since $\pp$ belongs to the simplex $\yy_1\yy_2\dots\yy_{d+1}$, it belongs to the convex hull of those $\yy_i$ that are in
  $H$. This contradicts the general position condition of a generic Pach's
  configuration.
\end{proof}
}
\fi

For a proof we need the following property of generic Pach's configurations. 

\begin{lemma}\label{lemma_generic_strict_separation}
  Let $(Y_1, \dots, Y_{d+1}, \pp)$ be a generic Pach's configuration. Then for every $i \in [d+1]$ there is a
  hyperplane $H_i$ strictly separating $\pp$ from
  $\widehat Y_i$. (We recall that $\widehat Y_i = \bigcup_{j \in [d+1] \setminus
  \{i\}} Y_j$.) Moreover, the hyperplanes $H_i$ can be chosen in such a way that the arrangement $\H
  = (H_1, \dots, H_{d+1})$ is in general position, $\pp \in \Delta(\H)$ and $Y_i \subseteq \interior (C_i(\H))$ for any $i \in
    [d+1]$.
\end{lemma}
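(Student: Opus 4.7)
The plan is to reduce the statement to the geometric claim that $\pp\notin\conv(\widehat Y_i)$ for every $i$, and then to invoke Lemma~\ref{l:separate} after a small perturbation. Granting the claim, since $\widehat Y_i$ is finite the strict hyperplane separation theorem produces a hyperplane strictly separating $\pp$ from $\widehat Y_i$. An arbitrarily small generic perturbation of these $d+1$ hyperplanes preserves strict separation (there is positive slack between a point and a finite set) and makes the resulting arrangement $\H=(H_1,\ldots,H_{d+1})$ in general position. Lemma~\ref{l:separate} then offers two alternatives; the second ``escape'' alternative would strictly separate $\pp$ from $Y_1\cup\cdots\cup Y_{d+1}$ by a hyperplane, which is impossible because $\pp$ belongs to every rainbow simplex and hence to $\conv(Y_1\cup\cdots\cup Y_{d+1})$. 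Therefore the first alternative holds and yields $\pp\in\Delta(\H)$ together with $Y_i\subseteq\interior(C_i(\H))$ as required.

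The heart of the argument, and the main obstacle, is the claim $\pp\notin\conv(\widehat Y_i)$. I would argue by contradiction, assuming $\pp\in\conv(\widehat Y_i)$. By Carath\'eodory's theorem, $\pp$ is a convex combination of at most $d+1$ distinct points of $\widehat Y_i$, and we may take all coefficients positive. The general position of $Y_1\cup\cdots\cup Y_{d+1}\cup\{\pp\}$ prevents $\pp$ from lying in the affine hull of any $\le d$ points of that union, forcing the combination to use exactly $d+1$ points $z_1,\ldots,z_{d+1}$. Since $\widehat Y_i$ is a disjoint union of only $d$ sets $Y_j$ with $j\neq i$, the pigeonhole principle supplies two of these points in a common $Y_k$; say $z_1,z_2\in Y_k$.

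Now pick any $y_i\in Y_i$. Both $(z_1,z_3,\ldots,z_{d+1},y_i)$ and $(z_2,z_3,\ldots,z_{d+1},y_i)$ are rainbow $(Y_1,\ldots,Y_{d+1})$-tuples, so $\pp$ lies in the interior of each corresponding simplex. Translating so that $\pp=\OO$ and reading the $z_j$ and $y_i$ as vectors, this interior containment places $y_i$ in the interior of each of the two full-dimensional simplicial cones $C_1$ and $C_2$ generated respectively by $-z_1,-z_3,\ldots,-z_{d+1}$ and $-z_2,-z_3,\ldots,-z_{d+1}$. Since $\OO$ lies in the interior of $\conv(z_1,\ldots,z_{d+1})$, the unique (up to positive scalar) linear dependence among $z_1,\ldots,z_{d+1}$ has all positive coefficients. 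A short sign chase on the difference of the two expressions for any common element of $C_1$ and $C_2$---the coefficient of $z_1$ there is nonpositive while that of $z_2$ is nonnegative, and both are forced by the sign pattern of the unique dependence to vanish---reveals that $C_1\cap C_2$ equals the common face generated by $-z_3,\ldots,-z_{d+1}$, a lower-dimensional set with empty interior in $\R^d$. This contradicts the existence of $y_i$ in both open cones, completing the reduction.
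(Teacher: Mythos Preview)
Your overall strategy---prove $\pp\notin\conv(\widehat Y_i)$, separate, perturb to general position, and invoke Lemma~\ref{l:separate}, ruling out its second alternative---matches the paper's, and those latter steps are handled correctly. The gap is in the contradiction argument. After Carath\'eodory and general position give $d+1$ points $z_1,\ldots,z_{d+1}\in\widehat Y_i$ with $\pp$ interior to their simplex, pigeonhole guarantees only that \emph{some} two of them share a color, say $z_1,z_2\in Y_k$; it does \emph{not} guarantee that $z_3,\ldots,z_{d+1}$ hit each remaining color $j\in[d+1]\setminus\{i,k\}$ exactly once. With $d=3$, for instance, one could have $z_1,z_2\in Y_k$ and $z_3,z_4\in Y_\ell$ for some other $\ell\neq i$, leaving one color of $[d+1]\setminus\{i\}$ absent. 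Then neither $(z_1,z_3,z_4,y_i)$ nor $(z_2,z_3,z_4,y_i)$ is a rainbow $(Y_1,\ldots,Y_{d+1})$-tuple, the defining property of a generic Pach's configuration cannot be invoked for them, and your cone argument never gets off the ground.

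The paper's contradiction bypasses this color-distribution issue by aggregating rather than selecting individual witnesses: from $\pp\in\conv(\widehat Y_i)$ one obtains points $\zz_j\in\conv(Y_j)$, one for each $j\neq i$, with $\pp$ a convex combination of the $\zz_j$. Any hyperplane $H$ through these $d$ points contains $\pp$; since each $\conv(Y_j)$ meets $H$, each $Y_j$ with $j\neq i$ has a point $\yy_j^+$ in the closed half-space $H^+$ containing an arbitrarily chosen $\yy_i^+\in Y_i$. The resulting rainbow simplex $\yy_1^+\cdots\yy_{d+1}^+\subseteq H^+$ cannot contain $\pp\in H$ in its interior, contradicting genericity. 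This produces a genuine rainbow simplex without any assumption on how the Carath\'eodory points are distributed among the colors.
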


\begin{proof}
  Suppose for contradiction that for some $i \in [d+1]$ the point $\pp$
  is not strictly separated from $\widehat Y_i$ by a hyperplane. That means that
  $\pp$ belongs to the convex hull $\conv(\widehat Y_i)$. Consequently, there are
  points $\zz_j \in \conv(Y_j)$ for $j \in [d+1] \setminus \{i\}$ such that
  $\pp$ is a convex combination of them. (Indeed, consider $\pp$ as a convex
  combination of points from $\widehat Y_i$ and put together points of each $Y_j$
  with appropriate weights.)

  Let $H$ be a hyperplane passing through the points $\zz_j$ for $j \in [d+1]
  \setminus \{i\}$.
  In particular, $\pp$ belongs to $H$.
  Let $\yy^+_i$ be a point of $Y_i$ and let
  $H^+$ and $H^-$ be the closed halfspaces determined by $H$ chosen in such a
  way that $\yy^+_i \in H^+$.
  For each $j \in [d+1] \setminus \{i\}$ we can find a point $\yy^+_j$ in $H^+
  \cap Y_j$ since $\conv(Y_j) \cap H \neq \emptyset$. Let $\Delta \subseteq
  H^+$ be the $(Y_1, \dots, Y_{d+1})$-simplex with vertices $\yy_j^+$
  for $j \in [d+1]$; see Figure~\ref{f:p_on_boundary}. Since $\Delta \subseteq H^+$ and
  since $\pp$ belongs to $H$, $\pp$ cannot be in the interior of
  $\Delta$. This contradicts our genericity assumption.

  It follows that there is a hyperplane $H_i$ strictly separating $\pp$ from
  $\widehat Y_i$. Finally, we rotate the hyperplanes $H_i$ a little bit, so that we keep their separation property and get an arrangement $\H$ in general position. Since $\pp$ is in all
  $(Y_1, \dots, Y_{d+1})$-simplices, Lemma~\ref{l:separate} implies that $\pp \in
  \Delta(\H)$ and that each $Y_i$ belongs to the interior of the corner region $C_i(\H)$. 
\end{proof}

\begin{proof}[Proof of Proposition~\ref{p:in_tiny_part}]
Let $\H = (H_1, \dots, H_{d+1})$ be the arrangement of hyperplanes from Lemma~\ref{lemma_generic_strict_separation}.
Since each $Y_i$ belongs to the corner region $C_i=C_i(\H)$, it remains to
bound the smallest of the volumes $\Vol(C_i \cap B^d)$. We refer to
Figure~\ref{f:vol_ci}, which illustrates the rest of the proof.
We use the same notation for the vertices of $\Delta(\H)$ as in Section~\ref{s:corner}.
  We fix $\ell \in [d+1]$ such that the solid angle $\vartheta$ at vertex $\hh_\ell$ is
  the minimum of all solid angles of $\Delta(\H)$. For each $i \in [d+1] \setminus \{\ell\}$, let $H'_i$ be a
  hyperplane parallel to $H_i$ passing through $\pp$ and let $C$ be the cell of the arrangement of hyperplanes $(H'_i)_{i
  \in [d+1] \setminus \{\ell\}}$ that contains $\hh_\ell$. Then $C$ contains
  $C_\ell$ and moreover
  $C \cap B(\pp, 2)$ contains $C_\ell \cap B^d$ since $B(\pp, 2)$ contains
  $B^d$. The
volume of $C \cap B(\pp, 2)$ is $2^d \vartheta = 2^d \msa(\Delta(\H)) \beta_d$. 
This gives the required upper bound.
\end{proof}

\ifobsolete
\gray{
\begin{proof}[Obsolete Proof of Proposition~\ref{p:in_tiny_part_obsolete}]
(i)
  Consider the hyperplane $H$ perpendicular to the line
  $\OO\pp$ and passing through $\pp$. By Lemma~\ref{l:cut_hyp_obsolete} there is $\ell$ such that $Y_{\ell}$ is in the
  solid cap cut by $H$. The volume of this solid cap is at most 
  $(1 - \alpha^2)^{d/2}\beta_d$ since it fits into a ball of radius $\sqrt{1 -
  \alpha^2}$; see Figure~\ref{f:cib}. 

\begin{figure}
\begin{center}
  \includegraphics{cap_in_ball.eps}
\end{center}
\caption{(Obsolete) Cap fits into a smaller ball}
\label{f:cib}
\end{figure}
\fi

\begin{figure}
\begin{center}
  \includegraphics{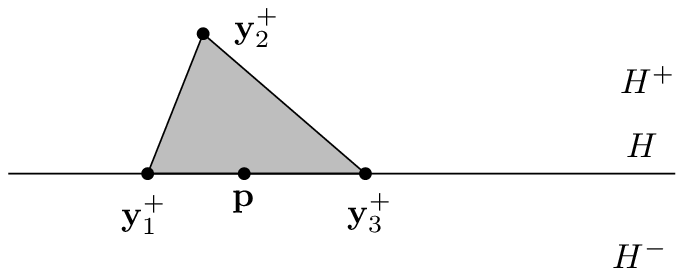}
\end{center}
\caption{The point $\pp$ cannot be in the interior of $\Delta$.}
\label{f:p_on_boundary}
\end{figure}

\ifobsolete
(ii) Let $\H = (H_1, \dots, H_{d+1})$ be the arrangement of hyperplanes from Lemma~\ref{lemma_generic_strict_separation}.
Since each $Y_i$ belongs to the corner region $C_i=C_i(\H)$, it remains to
bound the smallest of the volumes $\Vol(C_i \cap B^d)$. We refer to
Figure~\ref{f:vol_ci}, which illustrates the rest of the proof.
We use the same notation for the vertices of $\Delta(\H)$ as in Section~\ref{s:corner}.
  We fix $\ell \in [d+1]$ such that the solid angle $\vartheta$ at vertex $\hh_\ell$ is
  the minimum of all solid angles of $\Delta(\H)$. For each $i \in [d+1] \setminus \{\ell\}$, let $H'_i$ be a
  hyperplane parallel to $H_i$ passing through $\pp$ and let $C$ be the cell of the arrangement of hyperplanes $(H'_i)_{i
  \in [d+1] \setminus \{\ell\}}$ that contains $\hh_\ell$. Then $C$ contains
  $C_\ell$ and moreover, since the distance of the origin and $\pp$ is
  $\alpha$,
  we deduce that $C \cap B(\pp, 1 + \alpha)$ contains $C_\ell \cap B^d$. The
  volume of $C \cap B(\pp, 1 + \alpha)$ is $(1 + \alpha)^d \vartheta = (1
  + \alpha)^d \msa(\Delta(\H)) \beta_d$. This gives the required upper bound.
\end{proof}
}
\fi

\begin{figure}
\begin{center}
  \includegraphics{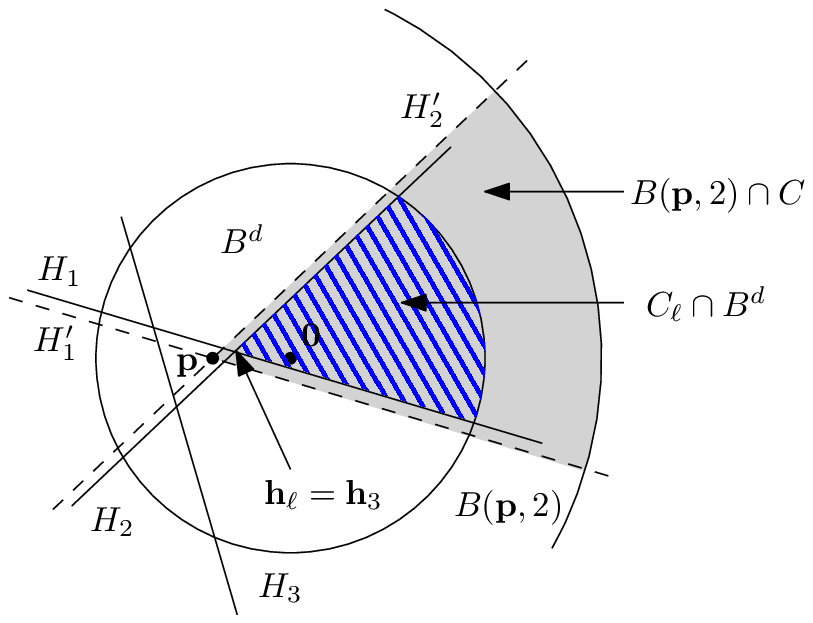}
\end{center}
\caption{$C \cap B(\pp, 1 + \alpha)$ contains $C_\ell \cap B^d$}
\label{f:vol_ci}
\end{figure}

Now we have all tools to prove the upper bound.

\begin{proof}[Proof of Theorem~\ref{t:bounds}(1)]
  Let $u(d)$ be the upper bound function on the minimum solid angle from Theorem~\ref{t:msa}, that is, 
  $\msa(\Delta) \leq u(d)$ for any simplex $\Delta$ and $u(d) \leq e^{-(1/2 -
  o(1))d \ln d}$. Let $g(d) \mathrel{\mathop:}= 2^d u(d)$. This value is still of order
  $e^{-(1/2 - o(1))(d \ln d)}$. In order to prove
  Theorem~\ref{t:bounds}(1), it is sufficient to show that Theorem~\ref{t:pach}
  cannot be valid with $c_d = g(d) + \zeta$ for any $\zeta > 0$.
  For contradiction, we assume that Theorem~\ref{t:pach} is valid
with such value of $c_d$.

Take a very small $\varepsilon > 0$ and tile $\R^d$ with hypercubes of side
$\varepsilon$. Let $\Q$ be the set of the hypercubes in the tiling
that intersect the interior of the unit ball $B^d$. For every $Q \in \Q$ and every $i \in [d+1]$ we select
exactly one point from $\interior (Q) \cap \interior (B^d)$ and add it into $X_i$, in such a way that the set $X_1 \cup \cdots \cup
X_{d+1}$ satisfies condition~(G). This finishes the construction of the
sets $X_i$.
  
Let $n\mathrel{\mathop:}=|\Q|$. By the construction, the size of each of the sets $X_i$ is $n$. Since $\bigcup\Q$ fits into a ball of radius $(1 + \varepsilon \sqrt d)$, we observe that $n$
  is well approximated in terms of the volume $\beta_d$ of
  $B^d$ as follows:
\begin{equation}
\label{e:bound_xi}
    \frac{1}{\varepsilon^d} \beta_d \leq n \leq \frac{1}{\varepsilon^d} \big(1
    + \varepsilon \sqrt d\big)^d \cdot\beta_d.
\end{equation}

We apply Theorem~\ref{t:pach} with $c_d = g(d) + \zeta$ to our sets $X_i$ and obtain sets $Y'_1, \dots, Y'_{d+1}$ and a point $\pp'$ as an output.
  If $\varepsilon$ is small enough, then $n$ is large enough so that
  Lemma~\ref{l:shrinkY} yields a generic Pach's configuration $(Y_1, \dots, Y_{d+1}, \pp)$ where
  $Y_i \subseteq X_i$ and $|Y_i| > (g(d) + \zeta / 2 )|X_i|$ for every $i \in [d+1]$.

  By Proposition~\ref{p:in_tiny_part} and Theorem~\ref{t:msa}, there is an
  $\ell \in [d+1]$ such that $Y_\ell$ is contained in the region
  $G\mathrel{\mathop:}= C_{\ell}\cap B^d$ with volume at most $2^d
  u(d)\beta_d = g(d) \beta_d$.

  We want to bound the number of points in $Y_\ell$ by the volume of 
  $G$. Let $\Q_\ell$ be a subset of $\Q$ consisting of those cubes that
  meet the interior of $G$. Note that
  \begin{equation}
    \label{e:YQl}
    |Y_\ell| \leq |\Q_\ell|.
  \end{equation}
  We further split $\Q_\ell$ into two disjoint sets $\Q_\ell^\partial$ and $\Q_\ell^{\interior}$ where $\Q_\ell^\partial$
  contains those cubes that meet the boundary of $G$ and
  $\Q_\ell^{\interior}$ contains those cubes that are fully contained in the
  interior of $G$. See Figure~\ref{f:Q_cubed}.

\begin{figure}
\begin{center}
  \includegraphics{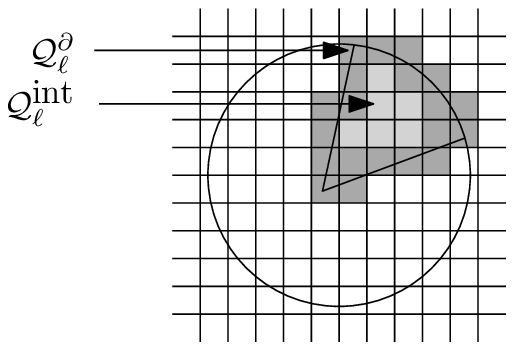}
\end{center}
\caption{Splitting $\Q_\ell$ into $\Q_\ell^\partial$ and $\Q_\ell^{\interior}$. With decreasing $\varepsilon$, the volume of 
  $\bigcup \Q_\ell^\partial$ tends to $0$}
\label{f:Q_cubed}
\end{figure}

 We have an obvious upper bound on the size of $\Q_\ell^{\interior}$:
 \begin{equation}
\label{e:bound_qint}
|\Q_\ell^{\interior}| \leq \frac1{\varepsilon^d} \Vol(G) \leq
\varepsilon^{-d}g(d)\beta_d.
 \end{equation}

For the size of $\Q_\ell^{\partial}$ we can get the following upper bound. Each cube
of $\Q_\ell^{\partial}$ belongs to the $(\varepsilon \sqrt d)$-neighborhood
$N_\varepsilon$
of the boundary $\partial G$ of $G$. The $(d-1)$-dimensional volume
of $\partial G$ can be bounded by some function
$f(d)$ depending only on $d$ (note that $G$ was obtained by cutting $B^d$ at
most $d$-times). Therefore
 \begin{equation}
\label{e:bound_lim}
   \lim_{\varepsilon\to 0} \Vol(N_\varepsilon) = 0
 \end{equation}
considering $d$ fixed. In addition,
 \begin{equation}
\label{e:bound_qboundary}
|\Q_\ell^{\partial}| \leq \frac1{\varepsilon^d} \Vol(N_\varepsilon).
 \end{equation}

 Combining $|X_\ell| = n$ with \eqref{e:bound_xi}, \eqref{e:YQl}, \eqref{e:bound_qint} and
\eqref{e:bound_qboundary} yields
\[
\frac{|Y_\ell|}{|X_\ell|} \leq \frac{|\Q_\ell|}n \leq
\frac{\varepsilon^{-d}g(d)\beta_d +
\varepsilon^{-d} \Vol(N_\varepsilon)}{\varepsilon^{-d}\beta_d} = 
g(d) + \frac{\Vol(N_\varepsilon)}{\beta_d}.
\]
Using~\eqref{e:bound_lim}, this is a contradiction with
$\frac{|Y_\ell|}{|X_\ell|} > g(d) +
\frac{\zeta}2$ if $\varepsilon$ is small enough.

\end{proof}

\section{Lower bound}
\label{s:lb}
In this section we prove Theorem~\ref{t:bounds}(2). We reuse many steps form
Pach's original proof~\cite{pach98} and we also follow an exposition of Pach's
proof by Matou\v{s}ek~\cite[Chapter~9]{matousek02}.

\begin{lemma}[Few separations]
\label{l:weak-sep}
  Let $S_1,\ldots, S_{d+1}$ be disjoint finite sets of points in $\R^d$ and
  let $\pp$ be a point in $\R^d$ such that $S_1\cup S_2\cup\ldots\cup S_{d+1} \cup
  \{\pp\}$ is in general position. Then there exist sets $Y_1\subseteq S_1,\ldots, Y_{d+1}\subseteq S_{d+1}$ satisfying
    \begin{enumerate}
	    \item $|Y_i|\geq \frac{1}{2^d} |S_i|$, and 
	    \item the point $\pp$ either lies in all $(Y_1,\ldots, Y_{d+1})$-simplices, or in none of them.
    \end{enumerate}
\end{lemma}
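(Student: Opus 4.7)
The goal is to construct an arrangement $\mathcal{H} = (H_1, \dots, H_{d+1})$ of hyperplanes in general position together with subsets $Y_i \subseteq S_i$ for which Lemma~\ref{l:separate} applies, namely each $H_j$ strictly separates $\pp$ from $\widehat Y_j$. Then the two alternatives of Lemma~\ref{l:separate}, combined with Lemma~\ref{l:in_simplex}, translate directly into the two alternatives in the present lemma: $\pp$ is either in all, or in none, of the $(Y_1,\ldots,Y_{d+1})$-simplices.

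\textbf{Iterative halving via ham-sandwich.} Set $S_i^{(0)} := S_i$. At each step $j = 1, \dots, d+1$, apply the ham-sandwich theorem in $\R^d$ to the $d$ point sets $\{S_i^{(j-1)}\}_{i \in [d+1]\setminus\{j\}}$ to obtain a hyperplane $H$ such that each closed halfspace contains at least half of each of these $d$ sets. General position of $S_1 \cup \dots \cup S_{d+1} \cup \{\pp\}$ guarantees $\pp \notin H$, since $H$ is determined by at most $d$ points of $\bigcup_{i \neq j} S_i^{(j-1)}$, and $\pp$ together with any $d$ of those points would violate genericity. Orient $H$ so that $\pp \in H^-$ and shift $H$ by an infinitesimal distance toward $\pp$ to obtain $H_j$: then $\pp$ remains strictly in $H_j^-$, while every point of $\bigcup_{i \neq j} S_i^{(j-1)}$ that lay in the closed positive halfspace of $H$ now lies in the open positive halfspace of $H_j$. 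In particular
\[
|S_i^{(j-1)} \cap \interior(H_j^+)| \geq \lceil |S_i^{(j-1)}|/2 \rceil \quad \text{for every } i \in [d+1] \setminus \{j\}.
\]
Set $S_i^{(j)} := S_i^{(j-1)} \cap \interior(H_j^+)$ for $i \neq j$ and $S_j^{(j)} := S_j^{(j-1)}$, and finally let $Y_i := S_i^{(d+1)}$. Since each $S_i$ is halved at exactly the $d$ steps $j \neq i$, we obtain $|Y_i| \geq |S_i|/2^d$.

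\textbf{Applying the two separation lemmas.} By construction $\widehat Y_j \subseteq \interior(H_j^+)$, so $H_j$ strictly separates $\pp$ from $\widehat Y_j$; an infinitesimal perturbation of the $H_j$ (preserving all the strict separations and the inclusions $Y_i \subseteq \interior(C_i(\mathcal{H}))$) puts $\mathcal{H}$ in general position. Lemma~\ref{l:separate} then offers two alternatives. If $\pp \in \Delta(\mathcal{H})$ and each $Y_i \subseteq \interior(C_i(\mathcal{H}))$, Lemma~\ref{l:in_simplex} places $\pp$ in every simplex with one vertex in each $C_i(\mathcal{H})$, and in particular in every $(Y_1,\ldots,Y_{d+1})$-simplex. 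Otherwise a hyperplane strictly separates $\pp$ from $Y_1 \cup \cdots \cup Y_{d+1}$, so $\pp$ belongs to no $(Y_1,\ldots,Y_{d+1})$-simplex.

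\textbf{Main obstacle.} The delicate step is converting the closed-halfspace guarantee of the ham-sandwich theorem into the open-halfspace (strict) guarantee required by Lemma~\ref{l:separate}, without losing the halving bound. This is exactly where the hypothesis $\pp \notin H$ is essential: shifting $H$ toward $\pp$ by less than the distance from $\pp$ to $H$ keeps $\pp$ strictly in $H_j^-$ while pushing every point originally on $H$ into the open positive halfspace, which is the side we want. The genericity hypothesis on $S_1 \cup \cdots \cup S_{d+1} \cup \{\pp\}$ is used precisely to secure $\pp \notin H$ at every step of the iteration.
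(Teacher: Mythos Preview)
Your approach matches the paper's almost exactly: iterated ham-sandwich cuts to build $d+1$ separating hyperplanes, a final perturbation to general position, and then Lemmas~\ref{l:separate} and~\ref{l:in_simplex} to conclude. There is, however, one genuine gap.

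You assert that general position automatically forces $\pp \notin H$ because ``$H$ is determined by at most $d$ points of $\bigcup_{i \neq j} S_i^{(j-1)}$''. This is not true as stated: the ham-sandwich hyperplane need not pass through \emph{any} points of the sets (think of the case where every $S_i^{(j-1)}$, $i\neq j$, has even cardinality), so it is not in general \emph{determined} by such points, and nothing prevents it from containing $\pp$. What general position does give you is only that $U\mathrel{\mathop:}= H\cap\bigl(S_1\cup\cdots\cup S_{d+1}\bigr)$ has at most $d$ elements. The paper uses exactly this: if $\pp\in H$, then $\{\pp\}\cup U$ consists of at most $d+1$ points in general position, hence $\pp\notin\aff(U)\subsetneq H$, and one can tilt $H$ about $\aff(U)$ to push $\pp$ off while keeping $U\subset H$ and therefore preserving the bisection counts. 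After this repair, your shift-toward-$\pp$ step and the remainder of the argument go through unchanged.

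A small cosmetic point: in your perturbation step you parenthetically ask to preserve ``the inclusions $Y_i \subseteq \interior(C_i(\mathcal{H}))$'', but these inclusions are a \emph{consequence} of Lemma~\ref{l:separate}, not something you have in hand before applying it. It suffices to perturb so as to keep the $d+1$ strict separations $H_j\mid \pp,\widehat Y_j$; Lemma~\ref{l:separate} then delivers the corner-region containments.
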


\begin{proof}
  We will reduce the sizes of the sets $S_i$ in $d+1$ steps; after these steps
  we obtain the required sets $Y_i$.
  For each $i \in [d+1]$ we set $S_i^{(0)} \mathrel{\mathop:}= S_i$. In the $j$th step
  we construct a hyperplane $H'_j$ and sets $S_i^{(j)}$ for all $i \in [d+1]$ 
  with the following properties.
  \begin{enumerate}[(i)]
  \item $S_i^{(j)} \subseteq S_i^{(j-1)}$ for $i,j \in [d+1]$;
  \item $|S_i^{(j)}| \geq |S_i^{(j-1)}|/2$ for $i,j \in [d+1]$, $i \neq j$;
  \item $S_j^{(j)} = S_j^{(j-1)}$ for $j \in [d+1]$; and
  \item $H'_j$ strictly separates $\pp$ from $S_i^{(j)}$ for $i,j \in [d+1]$,
    $i \neq j$.
\end{enumerate}
 This can be easily done inductively using the ham sandwich theorem.
 In the $j$th step we assume that we have already constructed the sets
 $S_i^{(j')}$ and the hyperplanes $H'_{j'}$ for $j' < j$.
By the general position variant of the ham sandwich
theorem~\cite[Corollary 3.1.3]{Matousek03} there is a hyperplane $H''_j$
simultaneously bisecting the $d$ sets $S^{(j-1)}_i$ for $i \neq j$.
That is, both open halfspaces determined by $H''_j$ contain at least $\lfloor
|S_i^{(j-1)}| / 2\rfloor$ points of each $S_i^{(j-1)}$ for $i \in [d+1]
\setminus \{j\}$. 
To obtain the required conclusion, we would like to choose $S_i^{(j)}$ to be the half of $S_i^{(j-1)}$ that belongs
to the opposite halfspace than $\pp$. 

We just have to be careful when $\pp$ actually belongs to $H''_j$ or
when $H''_j$ intersects some $S_i^{(j-1)}$ for $i \in [d+1]
\setminus \{j\}$. If $\pp \in H''_j$, we consider
the (possibly empty) set $U\mathrel{\mathop:}=H''_j\cap (S_1 \cup \cdots \cup S_{d+1})$. We realize that the flat determined by $U$ (that is, the affine hull of
$U$) is strictly contained in $H''_j$ and $\pp$ does not belong to this flat,
both by the general position assumption on $\{\pp\} \cup U$.
Therefore, we can perturb $H''_j$ a bit so that it still contains $U$ but it
avoids $\pp$ and no other point of $S_1 \cup \cdots \cup S_{d+1}$ switched the
side. So we can assume that $\pp$ does not belong to $H''_j$.

As $\pp$ does not belong to $H''_j$, we consider the
hyperplane $H'_j$ obtained by shifting $H''_j$ by a small bit towards $\pp$. For
$i \in [d+1]\setminus j$ we set $S^{(j)}_i$ to be the subset of $S^{(j-1)}$
belonging to the open halfspace on the other side of $H'_j$ than $\pp$. We also
set $S^{(j)}_j \mathrel{\mathop:}= S^{(j-1)}_j$. Then these sets satisfy the required
conditions (i)--(iv).

Finally, we set $Y_i \mathrel{\mathop:}= S^{(d+1)}_i$ for $i \in [d+1]$. Then $Y_i \subseteq
S_i$ and $|Y_i| \geq \frac1{2^d} |S_i|$ by (i), (ii) and (iii). We slightly
perturb the hyperplanes $H'_j$ obtaining new hyperplanes $H_j$ in general
position such that each $H_j$ still strictly separates $\pp$ and $Y_i$. Letting
$\H$ be the arrangement of these hyperplanes we get either $\pp \in
\Delta(\H)$ or not.

In the first case Lemma~\ref{l:separate} and Lemma~\ref{l:in_simplex} imply
that $\pp$ is in all $(Y_1, \dots, Y_{d+1})$-simplices. In the second case
Lemma~\ref{l:separate} implies that $\pp$ is in no $(Y_1, \dots,
Y_{d+1})$-simplex.
\end{proof}

The last tool we need for the proof of Theorem~\ref{t:pach} is the weak
hypergraph regularity lemma. We will be given a $k$-partite $k$-uniform
hypergraph $\mathbf{H}$ on the vertex set $X_1
 \cup \cdots \cup X_k$, where the sets $X_i$ are pairwise disjoint and each edge of the hypergraph contains exactly
 one point from each of the sets $X_i$.
 For any subsets $Y_i \subseteq X_i$, $i \in [k]$, we define $e(Y_1, \dots,
 Y_k)$ as the number of edges in the subhypergraph $\mathbf{H}[Y_1,\dots,Y_k]$ induced by $Y_1, \dots, Y_k$.
 We also define the \emph{density} function
 \[
 \rho(Y_1, \dots, Y_k) \mathrel{\mathop:}= \frac{e(Y_1,\dots,Y_k)}{|Y_1|\cdots|Y_k|}
 \]
 as the ratio of the number of edges in $\mathbf{H}[Y_1,\dots,Y_k]$ and the number
 of all possible edges in a $k$-partite hypergraph with vertex set $Y_1 \cup
 \cdots \cup Y_k$. We also set $\rho(\mathbf{H}) \mathrel{\mathop:}= \rho(X_1, \dots, X_k)$.

 \begin{theorem}[Weak regularity lemma for hypergraphs~\cite{pach98}; see
   also~{\cite[Theorem 9.4.1]{matousek02}}]\label{t:weak_reg_lemma}
 Let $\mathbf{H}$ be a $k$-partite $k$-uniform hypergraph on a vertex set $X_1
 \cup \cdots \cup X_k$, where $|X_i| = n$ for $i \in [k]$. Suppose that its
 edge density satisfies $\rho(\mathbf H) \geq \beta$ for some $\beta > 0$. Let $0
 < \varepsilon < \frac12$. Suppose also that $n$ is sufficiently large in terms
 of $k$, $\varepsilon$ and $\beta$.

 Then there exist subsets $S_i \subseteq X_i$ of equal size $|S_i|=s\geq
 \beta^{1/\varepsilon^k}n$, for any $i \in [k]$ such that
 \begin{enumerate}
    \item (High density) $\rho(S_1,\ldots,S_k)\geq \beta$, and
    \item (Edges on large subsets) $e(Y_1,\ldots,Y_k)>0$ for any $Y_i \subseteq S_i$ with $|Y_i|\geq \varepsilon s, i=1,2,\ldots,k$.
 \end{enumerate}
\end{theorem}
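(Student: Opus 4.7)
The plan is a density-increment iteration in the spirit of weak regularity lemmas for hypergraphs. I maintain sets $S_i^{(t)}\subseteq X_i$ of common cardinality $s_t$ with density $\beta_t:=\rho(S_1^{(t)},\ldots,S_k^{(t)})\geq\beta$, starting from $S_i^{(0)}:=X_i$ and $\beta_0\geq\beta$. At each stage, if condition~(2) holds for the current sets I stop and output them; otherwise there are witnesses $Y_i\subseteq S_i^{(t)}$ with $|Y_i|\geq\varepsilon s_t$ and $e(Y_1,\ldots,Y_k)=0$, which drive the refinement.

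For the refinement, I partition each $S_i^{(t)}$ into the two pieces $Y_i$ and $S_i^{(t)}\setminus Y_i$, producing $2^k$ product boxes. The box $Y_1\times\cdots\times Y_k$ contains none of the edges, while its fractional volume is at least $\varepsilon^k$; hence by averaging one of the remaining $2^k-1$ boxes $B_1\times\cdots\times B_k$ has edge density at least $\beta_t/(1-\varepsilon^k)>\beta_t$. I set $S_i^{(t+1)}:=B_i$ and, if needed, trim to restore a common size. Since all coordinate sizes in a nontrivial box lie between $\varepsilon s_t$ and $(1-\varepsilon)s_t$, the trimming only loses a bounded factor per step.

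Since edge density is bounded above by $1$, the process halts within $T\leq \lceil \log(1/\beta)/\log(1/(1-\varepsilon^k))\rceil$ steps, and on termination both conditions~(1) and (2) are simultaneously satisfied (density is non-decreasing along the iteration). The lower bound on the final common size then follows by combining the per-step coordinate shrinkage, at most a factor $\varepsilon$, with the bound on $T$; using $\log(1/(1-\varepsilon^k))\geq\varepsilon^k$ one obtains $s\geq\beta^{1/\varepsilon^k}n$ after absorbing any rounding losses into the hypothesis that $n$ is sufficiently large in terms of $k$, $\varepsilon$, and $\beta$.

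The main obstacle I expect is the size bookkeeping: the density boost and the halting argument are routine once the refinement step is set up, but making the cumulative coordinate shrinkage small enough to reach the claimed bound $\beta^{1/\varepsilon^k}n$ requires a careful choice among the $2^k-1$ candidate boxes, biasing towards those in which most coordinates retain the larger half $S_i^{(t)}\setminus Y_i$ of size at least $(1-\varepsilon)s_t$, so that the product of shrinkage factors over the at most $\log(1/\beta)/\varepsilon^k$ iterations still exceeds $\beta^{1/\varepsilon^k}$. The restriction $\varepsilon<1/2$ is used precisely to ensure this tradeoff between density increment and size decrement goes through.
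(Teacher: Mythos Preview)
The paper does not include its own proof of this theorem; it is quoted from~\cite{pach98} and~\cite[Theorem~9.4.1]{matousek02} and applied as a black box in Section~\ref{s:lb}. Your density-increment strategy is indeed the standard approach in those references, and your first three paragraphs are essentially correct. Two small points you should make explicit: the witnesses $Y_i$ may without loss be trimmed to size exactly $\lceil\varepsilon s_t\rceil$ (any subsets of an edge-free box are still edge-free), so that every coordinate of every sub-box has size $\varepsilon s_t$ or $(1-\varepsilon)s_t$; and trimming the chosen box down to a common coordinate size preserves density by a second averaging argument.

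The genuine gap is precisely the one you flag in your last paragraph, and your sketch does not close it. With per-step shrinkage $\varepsilon$ and $T\le\ln(1/\beta)/\varepsilon^k$ iterations one only gets
\[
s_T \ \ge\ \varepsilon^{T}\,n \ \ge\ \varepsilon^{\ln(1/\beta)/\varepsilon^k}\,n \ =\ \beta^{\,\ln(1/\varepsilon)/\varepsilon^k}\,n,
\]
which matches the claimed $\beta^{1/\varepsilon^k}n$ only when $\ln(1/\varepsilon)\le 1$, i.e.\ $\varepsilon\ge 1/e$. Your sentence ``using $\log(1/(1-\varepsilon^k))\ge\varepsilon^k$ one obtains $s\ge\beta^{1/\varepsilon^k}n$'' is an arithmetic slip: $\varepsilon^{\ln(1/\beta)/\varepsilon^k}=\beta^{\ln(1/\varepsilon)/\varepsilon^k}$, not $\beta^{1/\varepsilon^k}$. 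The proposed remedy, ``biasing towards boxes in which most coordinates retain the larger half'', is not carried out, and plain averaging over the $2^k-1$ boxes gives no control over which box realises the density increment; it may well be one with many small coordinates. To obtain the stated exponent you need sharper bookkeeping than naive box-averaging---this is where you should consult the cited proofs rather than hope the worst case does not occur.
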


We are finally ready to prove the lower bound on the maximum Pach's constant from Theorem~\ref{t:pach}.

\begin{proof}[Proof of Theorem~\ref{t:bounds}(2)]

It is convenient to start the proof with additional assumptions. Later on we
will show how to remove these assumptions. We start assuming that $X_1 \cup
\cdots \cup X_{d+1}$ is in general position and also assuming that the size $n$ of
the sets $X_i$ is large enough, that is, $n \geq n_0$, where $n_0$ depends only on
$d$.

By Theorem~\ref{t:karasev}, there is a point $\pp$ contained in the interior of
at least $\frac{1}{(d+1)!} n^{d+1} - O(n^d)$  $(X_1,\ldots,X_{d+1})$-simplices. We
perturb the point $\pp$ a little so that $X_1 \cup \cdots X_{d+1} \cup \{\pp\}$
is in general position but $\pp$ does not leave the interior of any
$(X_1,\ldots,X_{d+1})$-simplex during the perturbation. We require that $n_0$
is large enough so that $\pp$ actually belongs to the interior of at least
$\frac{1}{2^{d^2}} n^{d+1}$ $(X_1,\ldots,X_{d+1})$-simplices, using a very rough
estimate $(d+1)! < 2^{d^2}$ (a better estimate would not improve the bound
significantly).

 Next, we consider the $(d+1)$-partite hypergraph $\mathbf H$ with vertex set $X_1
 \cup X_2 \cup \cdots \cup X_{d+1}$ whose edges are precisely the
 $(X_1,\ldots,X_{d+1})$-simplices containing the point $\pp$.
 Let $\varepsilon = \frac{1}{2^d}$ and let us further require that
 $n_0$ is large enough so that the assumptions of
 Theorem~\ref{t:weak_reg_lemma} are met.
 We apply the weak regularity lemma (Theorem~\ref{t:weak_reg_lemma}) to
 $\mathbf H$. Note that $\beta \geq \frac{1}{2^{d^2}}$. This yields sets $S_i \subseteq
 X_i$ with size $|S_i|=s\geq \beta^{1/\varepsilon^{d+1}}n$, and such that any
 subsets $Y_i \subseteq S_i$ of size at least $\varepsilon s$ induce an
 edge; that is, there is a $(Y_1,\ldots, Y_{d+1})$-simplex containing
 the point $\pp$.

 Finally, we apply Lemma \ref{l:weak-sep} with the sets $S_1,\ldots,S_{d+1}$
 and point $\pp$. We obtain sets $Y_i \subseteq S_i$ with $|Y_i|\geq
 \frac{1}{2^d}s = \varepsilon s$.
 Moreover, the point $\pp$ either lies in all $(Y_1,\ldots,
 Y_{d+1})$-simplices, or in none of them. But the latter possibility is
 excluded by the fact that $Y_i$ are large enough.

 Because $\csup_1=1/2$, we assume $d\geq 2$ in the following calculations.
 So we obtained the desired sets $Y_i$'s of size $c_d|X_i|$, where
 \[
 c_d \geq \frac{1}{2^d}\beta^{1/\varepsilon^{d+1}}
 \geq
 \frac{1}{2^d}\cdot \left(\frac 1{2^{d^2}}\right)^{2^{d(d+1)}} =
 2^{-d - d^2\cdot2^{d(d+1)} } \geq 2^{- 2^{d^2  + 3d}}.
 \]
 This finishes the proof under the assumptions that $X_1 \cup \cdots \cup
 X_{d+1}$ is in general position and $n \geq n_0$.

 First, by a standard compactness argument we can remove the
 general position assumption. Here we can even assume that $X_i$ are multisets,
 that is, some of the points can be repeated more than once. Indeed, we choose sets
 $X^{(m)}_i$ of size $n$ such that $X_1^{(m)} \cup \cdots \cup X_{d+1}^{(m)}$ is in general
 position for every positive integer $m$ and such that $X^{(m)}_i$ converges to
 $X_i$. We obtain the corresponding sets $Y_i^{(m)}$ and Pach points $\pp^{(m)}$
 using the general position version of the theorem. Since the sizes of the sets $X_i^{(m)}$ are uniformly bounded by $n$,
 there is an infinite increasing sequence $(m_k)$ such that for every $i\in [n+1]$, the sequence $Y_i^{(m_k)}$
 converges to a certain set $Y_i \subseteq X_i$. Since all the sets $X^{(m)}_i$ belong to
 a compact region in $\R^d$, the sequence of Pach points $\pp^{(m_k)}$ has an accumulation point $\pp$. It is routine to check that the sets $Y_i$ and the point $\pp$ satisfy the required conditions.

 Next, we can remove the assumption $n \geq n_0$ in the following way. If $n <
 n_0$ we find an integer $m$ such that $m \cdot n \geq n_0$. We make multisets
 $X'_i$ where each $X'_i$ consists of points of $X_i$, each repeated $m$ times. Using the theorem for the sets $X'_i$, we find a point $\pp'$ and sets $Y'_i$ of sizes at least $c_d \cdot m \cdot n$.
 Forgetting the $m$-fold repetitions in $Y'_i$ we the get the required sets $Y_i$ of sizes
 at least $|Y'_i|/m$, and we set $\pp \mathrel{\mathop:}= \pp'$.
\end{proof}

\begin{remark}
  The argument at the end of the previous proof also shows that the assumption
  that all $X_i$ have equal size can be easily removed. Indeed, let $X_1, \dots,
  X_{d+1}$ be subsets
  of $\R^d$ of various sizes. We set $\gamma \mathrel{\mathop:}= |X_1|\cdots|X_{d+1}|$. We
  create multisets $X'_i$ where each point of $X_i$ is repeated $\gamma/|X_i|$
  times. That is, each $X'_i$ has size $\gamma$ and so we can find $\pp'$ and sets $Y'_i$ of sizes
  at least $c_d \gamma$. Forgetting the repetitions in
  $Y'_i$ we get sets $Y_i$ of sizes at least $c_d |X_i|$.
\end{remark}

\section{Measure version of Pach's theorem}\label{section_measure}

\subsection{Borel probability measures}

First we review some essential measure-theoretic background.
A sequence $\mu_n$ of Borel probability measures on $\R^d$ is \emph{weakly convergent} to a Borel probability measure $\mu$ on $\R^d$ if for every bounded continuous function $f:\R^d \rightarrow \R$, we have 
\[
\lim_{n\rightarrow \infty} \int_{\R^d} \!f\, \mathrm{d}\mu_n = \int_{\R^d} \!f\, \mathrm{d}\mu.
\] 
Alexandroff~\cite{Alexandroff43} established several equivalent definitions of weak convergence. The following one shows that it is sufficient to test the measure of closed sets.

\begin{theorem}[\cite{Alexandroff43}; see also{~\cite[Corollary 8.2.10]{bogachev07_II}}]\label{theorem_weak_closed}
A sequence $\mu_n$ of Borel probability measures on $\R^d$ is weakly convergent to a Borel probability measure $\mu$ on $\R^d$ if and only if for every closed set $F\subseteq \R^d$, we have 
\[
\limsup_{n\rightarrow \infty} \mu_n(F) \le \mu(F).
\]
\end{theorem}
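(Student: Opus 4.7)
The plan is to establish both implications of the Portmanteau-type equivalence, with the bulk of the work in the reverse direction.

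For the forward direction, given a closed set $F \subseteq \R^d$, I would approximate the indicator $\mathbf{1}_F$ from above by a decreasing sequence of bounded continuous functions, for example
\[
f_k(x) \mathrel{\mathop:}= \max\bigl(0,\, 1 - k\cdot\dist(x,F)\bigr).
\]
Each $f_k$ is bounded, continuous, and satisfies $f_k \ge \mathbf{1}_F$ pointwise, with $f_k \downarrow \mathbf{1}_F$ as $k\to\infty$. Hence $\mu_n(F)\le \int f_k\, \mathrm d\mu_n$, and taking $\limsup$ in $n$ while using weak convergence yields $\limsup_n \mu_n(F)\le \int f_k\, \mathrm d\mu$; letting $k\to\infty$ and applying the dominated convergence theorem to $f_k\downarrow \mathbf{1}_F$ gives the desired bound $\limsup_n \mu_n(F)\le \mu(F)$.

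For the reverse direction, the first observation is that the stated limsup inequality for closed sets is equivalent, via complementation together with $\mu_n(\R^d)=\mu(\R^d)=1$, to the \emph{liminf} inequality $\liminf_n \mu_n(G) \ge \mu(G)$ for every open $G\subseteq \R^d$. Now let $f:\R^d\to\R$ be bounded and continuous; by adding a constant and rescaling I may assume $0\le f\le M$. The key tool is the layer-cake representation
\[
\int f\, \mathrm d\nu \;=\; \int_0^M \nu\bigl(\{f>t\}\bigr)\, \mathrm dt,
\]
valid for every Borel probability measure $\nu$. Since $f$ is continuous, each superlevel set $\{f>t\}$ is open, so Fatou's lemma in the $t$-variable combined with the liminf inequality on open sets yields
\[
\liminf_n \int f\, \mathrm d\mu_n \;\ge\; \int_0^M \liminf_n \mu_n\bigl(\{f>t\}\bigr)\, \mathrm dt \;\ge\; \int_0^M \mu\bigl(\{f>t\}\bigr)\, \mathrm dt \;=\; \int f\, \mathrm d\mu.
\]
Applying the same argument to the bounded continuous function $M-f$ gives $\limsup_n \int f\, \mathrm d\mu_n \le \int f\, \mathrm d\mu$, and the two bounds together establish weak convergence.

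The main technical point is the reverse direction, where one has to pass between the pointwise statement about open (or closed) sets and the integral statement about continuous functions. There are no serious obstacles; the only subtlety worth mentioning is that the monotone functions $t\mapsto \mu(\{f>t\})$ and $t\mapsto \mu_n(\{f>t\})$ may have countably many jumps, but this is irrelevant for the Lebesgue integrals in $t$. No topological assumption beyond what is available in $\R^d$ (metric, second countable) is used; the argument is the classical Portmanteau equivalence specialised to Euclidean space.
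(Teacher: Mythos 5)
Your proof is correct, and it is the standard Portmanteau argument. However, the paper does not prove this theorem at all: it is stated purely as a cited result, attributed to Alexandroff and to Bogachev's textbook, so there is no in-paper proof to compare against. Your forward direction (approximating $\mathbf{1}_F$ from above by the Lipschitz cutoffs $f_k(x)=\max(0,1-k\,\dist(x,F))$, passing to the limit via weak convergence, then dominated convergence) and your reverse direction (complementation to pass from closed to open sets, the layer-cake formula for a bounded nonnegative continuous $f$, Fatou in the $t$-variable, and then replacing $f$ by $M-f$ to get the matching upper bound) are both sound and constitute the textbook proof one would find in Bogachev or Billingsley. The only tiny stylistic remark is that the final step of the forward direction, $\int f_k\,\mathrm d\mu\to\mu(F)$, can equally be justified by monotone convergence for the decreasing sequence $f_k\downarrow\mathbf{1}_F$ bounded above by an integrable function; either justification is fine for a probability measure.
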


The weak convergence of Borel probability measures on $\R^d$ can be also
defined as the convergence in the \emph{weak topology} on the space of Borel
probability measures on $\R^d$; see~\cite[Definition 8.1.2]{bogachev07_II}.
Moreover, this space is metrizable.

\begin{theorem}[{\cite[Theorem 8.3.2]{bogachev07_II}}]\label{theorem_weak_metric}
The weak topology on the space of Borel probability measures on $\R^d$ is generated by the L\'evy--Prohorov metric:
\[
d_P(\mu,\nu)\mathrel{\mathop:}=\inf\big\{\varepsilon>0; \text{ for every Borel set }B\subseteq
\R^d,  \nu(B)\le \mu(B^{\varepsilon})+\varepsilon \text{ and } \mu(B)\le
\nu(B^{\varepsilon})+\varepsilon\big\} 
\] 
where $B^{\varepsilon}=\{x\in\R^d;
\dist(x,B)<\varepsilon\}$.  
\end{theorem}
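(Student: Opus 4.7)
The plan is to establish two things: first that $d_P$ is indeed a metric on the space of Borel probability measures on $\R^d$, and second that the metric topology it induces coincides with the weak topology. Symmetry is built into the definition, and the triangle inequality uses the elementary inclusion $(B^{\alpha})^{\beta} \subseteq B^{\alpha+\beta}$ together with the defining inequalities. For the identity of indiscernibles, if $d_P(\mu,\nu) = 0$ then specializing to a closed set $F$ and letting $\varepsilon \to 0$ so that $F^{\varepsilon} \searrow F$ yields $\mu(F) \leq \nu(F)$ for every closed $F$; the symmetric inequality together with the fact that Borel probability measures on $\R^d$ are determined by their values on closed sets then gives $\mu = \nu$.

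For the easy direction of the topology equivalence, suppose $d_P(\mu_n,\mu) \to 0$ and fix a closed set $F \subseteq \R^d$. Setting $\varepsilon_n \mathrel{\mathop:}= d_P(\mu_n,\mu) + 1/n$, the definition of $d_P$ gives $\mu_n(F) \leq \mu(F^{\varepsilon_n}) + \varepsilon_n$. Because $F$ is closed, $F^{\varepsilon_n} \searrow F$, so $\mu(F^{\varepsilon_n}) \to \mu(F)$ by continuity of measure, yielding $\limsup_{n} \mu_n(F) \leq \mu(F)$. Theorem~\ref{theorem_weak_closed} then gives $\mu_n \to \mu$ weakly.

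The converse direction, weak convergence implies $d_P$-convergence, is the main obstacle, and I plan to attack it by producing a finite partition of a large compact set into pieces of diameter less than $\varepsilon$ whose boundaries are $\mu$-negligible, and then reducing the Prohorov estimate to weak convergence on these finitely many sets. For a given $\varepsilon > 0$, choose a closed ball $K$ with $\mu(K) > 1 - \varepsilon/3$; since only countably many radii can make $\mu(\partial K)$ positive, we may assume $\mu(\partial K) = 0$, and the portmanteau theorem applied to both $K$ and $\R^d \setminus K$ then forces $\mu_n(\R^d \setminus K) < \varepsilon/3$ for all sufficiently large $n$. Cover $K$ by open balls of radius less than $\varepsilon/2$ whose boundaries are $\mu$-null (again by the countability argument), extract a finite subcover, and refine it into a partition $A_1,\dots,A_m$ of $\R^d$ by successive set differences, adjoining $\R^d \setminus K$ as an additional piece; each $A_j \subseteq K$ has diameter less than $\varepsilon$ and $\mu(\partial A_j) = 0$.

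With the partition fixed, for every $J \subseteq \{1,\dots,m\}$ the boundary of $U_J \mathrel{\mathop:}= \bigcup_{j \in J} A_j$ is contained in $\bigcup_j \partial A_j$ and hence $\mu$-null, so $\mu_n(U_J) \to \mu(U_J)$ by the portmanteau characterization. As there are only finitely many such $J$, there is $N$ with $|\mu_n(U_J) - \mu(U_J)| < \varepsilon/3$ for all $J$ and all $n \geq N$. Given any Borel set $B$, let $U \mathrel{\mathop:}= \bigcup \{A_j : A_j \subseteq K,\ A_j \cap B \neq \emptyset\}$; then $B \cap K \subseteq U \subseteq B^{\varepsilon}$, and therefore
\[
\mu_n(B) \leq \mu_n(U) + \mu_n(\R^d \setminus K) \leq \mu(U) + \varepsilon/3 + \varepsilon/3 \leq \mu(B^{\varepsilon}) + \varepsilon.
\]
The symmetric estimate obtained by exchanging the roles of $\mu$ and $\mu_n$ (and using that $\mu_n(\R^d \setminus K) < \varepsilon/3$ forces $\mu(\R^d \setminus K) \leq \mu_n(\R^d \setminus K^{\varepsilon}) + \varepsilon/3 \leq \varepsilon/3$ modulo adjusting the construction) then gives $d_P(\mu_n,\mu) \leq \varepsilon$ for all large $n$, completing the proof.
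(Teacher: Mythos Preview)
The paper does not provide its own proof of this theorem; it is quoted as a known result with a citation to Bogachev's book, and the paper only \emph{uses} it (in the proof of Lemma~\ref{lemma_approx} and Corollary~\ref{cor_approx_gp}) to pass from density to sequential approximation. So there is no ``paper's proof'' to compare against.

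Your argument is the standard one and is essentially correct, but two points deserve attention. First, the parenthetical at the end is muddled and unnecessary: for the symmetric estimate $\mu(B)\le \mu_n(B^\varepsilon)+\varepsilon$ you already have $\mu(\R^d\setminus K)<\varepsilon/3$ directly from your choice of $K$, so
\[
\mu(B)\le \mu(U)+\mu(\R^d\setminus K)<\bigl(\mu_n(U)+\varepsilon/3\bigr)+\varepsilon/3\le \mu_n(B^\varepsilon)+\varepsilon
\]
goes through with no extra work; the detour through $\mu_n(\R^d\setminus K^\varepsilon)$ is both unneeded and, as written, not quite right.

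Second, and more substantively, you have only established that \emph{sequential} $d_P$-convergence and \emph{sequential} weak convergence coincide. To conclude that the two \emph{topologies} coincide you need to say why sequences suffice. One clean way: your ``weak $\Rightarrow d_P$'' argument actually produces, for each $\mu$ and each $\varepsilon>0$, a finite family of $\mu$-continuity sets $U_J$ and $K$ such that every $\nu$ with $|\nu(U_J)-\mu(U_J)|<\varepsilon/3$ and $\nu(K)>1-\varepsilon/3$ satisfies $d_P(\mu,\nu)\le\varepsilon$; since $\nu\mapsto\nu(A)$ is weakly continuous at $\mu$ whenever $\mu(\partial A)=0$, this exhibits each $d_P$-ball as a weak neighbourhood. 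The reverse inclusion follows because the identity from the metric space $(\mathcal P,d_P)$ to the weak topology is sequentially continuous, hence continuous. With these two sentences added, the proof is complete.
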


A measure $\mu$ on $\R^d$ is \emph{outer regular} if for every $\mu$-measurable set $S$ we have $\mu(S) = \inf\{\mu(U); S\subseteq U, U \text{ open} \}$.

\begin{lemma}[see~{\cite[Theorem 1.10.10 and Exercise 1.10.12]{Tao10_epsilon_I}}]\label{lemma_outer_regular}
Every Borel probability measure on $\R^d$ is outer regular.
\end{lemma}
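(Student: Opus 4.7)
The plan is to prove the slightly stronger regularity statement that for every Borel set $B \subseteq \R^d$ and every $\varepsilon > 0$ there exist an open set $U \supseteq B$ and a closed set $F \subseteq B$ with $\mu(U \setminus F) < \varepsilon$; outer regularity is then immediate. I would introduce the collection
\[
\mathcal{C} \mathrel{\mathop:}= \{B \subseteq \R^d \text{ Borel} : \forall \varepsilon > 0\ \exists U \text{ open}, F \text{ closed with } F \subseteq B \subseteq U,\ \mu(U \setminus F) < \varepsilon\}
\]
and show that $\mathcal{C}$ is a $\sigma$-algebra containing every closed set. Since the Borel $\sigma$-algebra is generated by closed sets, this yields $\mathcal{C}$ equal to the full Borel $\sigma$-algebra, and hence the lemma.

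First I would verify that every closed set $F$ lies in $\mathcal{C}$. The inner approximant is $F$ itself. For the outer approximant, consider the open neighborhoods $F^{1/n} = \{x \in \R^d : \dist(x,F) < 1/n\}$. Since $F$ is closed, $\bigcap_{n \ge 1} F^{1/n} = F$, and because $\mu$ is a \emph{finite} measure we may apply continuity from above to conclude $\mu(F^{1/n}) \to \mu(F)$, so any sufficiently large $n$ gives $\mu(F^{1/n} \setminus F) < \varepsilon$.

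Next I would check closure under the Boolean operations. Complementation is trivial: if $F \subseteq B \subseteq U$ witnesses $B \in \mathcal{C}$, then $U^c \subseteq B^c \subseteq F^c$ witnesses $B^c \in \mathcal{C}$, since $F^c \setminus U^c = U \setminus F$. Countable unions require one careful step: given $B_n \in \mathcal{C}$, pick for each $n$ closed $F_n \subseteq B_n$ and open $U_n \supseteq B_n$ with $\mu(U_n \setminus F_n) < \varepsilon/2^{n+1}$. Then $U \mathrel{\mathop:}= \bigcup_n U_n$ is open and contains $B \mathrel{\mathop:}= \bigcup_n B_n$. The set $\bigcup_n F_n$ is not necessarily closed, but each finite union $F^{(N)} \mathrel{\mathop:}= \bigcup_{n \le N} F_n$ is. By continuity from below applied to $\bigcup_n F_n$, together with finiteness of $\mu$, we can choose $N$ large enough so that $\mu\bigl(\bigcup_n F_n \setminus F^{(N)}\bigr) < \varepsilon/2$. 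Combining, $\mu(U \setminus F^{(N)}) \le \sum_n \mu(U_n \setminus F_n) + \mu\bigl(\bigcup_n F_n \setminus F^{(N)}\bigr) < \varepsilon$, which places $B$ in $\mathcal{C}$.

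The only mildly delicate point is the countable union step, and it is precisely there that we use $\mu(\R^d) = 1 < \infty$; without finiteness the finite subunion $F^{(N)}$ could fail to capture most of $\bigcup_n F_n$. Everything else is formal, and once $\mathcal{C}$ is shown to be a $\sigma$-algebra containing the closed sets, it equals the Borel $\sigma$-algebra, proving the lemma.
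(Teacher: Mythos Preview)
The paper does not supply its own proof of this lemma; it merely states the result and cites Tao's book. Your argument is correct and is the standard proof of regularity for finite Borel measures on a metric space: one verifies that the collection of sets admitting simultaneous inner-closed and outer-open $\varepsilon$-approximations is a $\sigma$-algebra containing the closed sets, hence coincides with the Borel $\sigma$-algebra. This is essentially the argument indicated in the cited reference, so there is nothing further to compare.
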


The \emph{Dirac's measure} $\delta_x$ at $x\in \R^d$ is a measure on $\R^d$ satisfying $\delta_x(\{x\})=1$ and $\delta_x(\R^d\setminus\{x\})=0$.
It is well known that Borel probability measures can be approximated by finite linear combinations of Dirac's measures in the following sense.

\begin{lemma}\label{lemma_approx}
For every Borel probability measure $\mu$ on $\R^d$ there is a sequence of measures $\mu_n$ weakly convergent to $\mu$ such that each $\mu_n$ has the following form: $\mu_n=\sum_{i=1}^{k_n} c_{n,i}\delta_{x_{n,i}}$ where $c_{n,i} \in (0,1]$ and $x_{n,i} \in \R^d$.
\end{lemma}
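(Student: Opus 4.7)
The plan is to construct $\mu_n$ by concentrating the mass of $\mu$ in small cells to single points, while truncating the support to a bounded region so that the sum remains finite. Because the total mass of $\mu$ may leak out of any bounded region, I will need to collect the ``escaped'' mass into one additional Dirac term so that $\mu_n$ is still a probability measure. Weak convergence will then be established via the closed-set characterization in Theorem~\ref{theorem_weak_closed}.

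In detail, fix $n \in \N$ and partition the cube $Q_n \mathrel{\mathop:}= [-n,n]^d$ into finitely many pairwise disjoint Borel cells $Q_{n,1}, \dots, Q_{n,m_n}$, each of diameter at most $1/n$ (for instance, half-open cubes of side $1/n$). For every index $j$ with $\mu(Q_{n,j}) > 0$, pick any point $x_{n,j} \in Q_{n,j}$ and set $c_{n,j} \mathrel{\mathop:}= \mu(Q_{n,j})$; discard the indices with $\mu(Q_{n,j})=0$. Let $r_n \mathrel{\mathop:}= 1 - \mu(Q_n) = \mu(\R^d \setminus Q_n)$; if $r_n > 0$, append an extra atom of mass $r_n$ at an arbitrary chosen point, say the origin (merging it with an existing atom if $\mathbf{0}$ already appears among the $x_{n,j}$). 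The resulting measure $\mu_n$ is then a finite nonnegative linear combination of Dirac masses with coefficients in $(0,1]$ and total mass $1$, so it is a Borel probability measure of the required form.

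To verify $\mu_n \to \mu$ weakly, I invoke Theorem~\ref{theorem_weak_closed} and check that $\limsup_{n\to\infty} \mu_n(F) \le \mu(F)$ for every closed set $F \subseteq \R^d$. Write $F^{1/n} = \{x \in \R^d : \dist(x,F) < 1/n\}$. If $x_{n,j} \in F$, then every point of $Q_{n,j}$ lies within distance $1/n$ of $F$ (since $\diam(Q_{n,j}) \le 1/n$), so $Q_{n,j} \subseteq F^{1/n}$. Because the cells $Q_{n,j}$ are pairwise disjoint, this gives
\[
\sum_{j:\, x_{n,j} \in F} c_{n,j} \;=\; \sum_{j:\, x_{n,j} \in F} \mu(Q_{n,j}) \;\le\; \mu(F^{1/n}),
\]
and therefore $\mu_n(F) \le \mu(F^{1/n}) + r_n$. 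Since $Q_n \uparrow \R^d$, we have $r_n \to 0$. Moreover $F^{1/n}$ is a decreasing sequence of Borel sets with intersection $F$ (as $F$ is closed), and all these sets have finite $\mu$-measure, so continuity of $\mu$ from above yields $\mu(F^{1/n}) \to \mu(F)$. Combining these two facts gives $\limsup_{n \to \infty} \mu_n(F) \le \mu(F)$, which by Theorem~\ref{theorem_weak_closed} proves the weak convergence $\mu_n \to \mu$.

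The only genuinely delicate point is handling the mass that escapes to infinity: without the correction term $r_n \delta_{\mathbf 0}$ the sequence would fail to be a probability measure, while with the correction term one must check that it does not spoil the estimate $\limsup_n \mu_n(F) \le \mu(F)$. This is exactly why the bound $\mu_n(F) \le \mu(F^{1/n}) + r_n$ is structured with an additive $r_n$ that vanishes in the limit, rather than appearing as a multiplicative distortion.
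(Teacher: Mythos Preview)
Your argument is correct, and it takes a genuinely different route from the paper's proof. The paper simply cites a density result from Bogachev (\cite[Example 8.1.6 (i)]{bogachev07_II}) stating that finite convex combinations of Dirac measures are dense in the weak topology, and then appeals to metrizability (Theorem~\ref{theorem_weak_metric}) to extract a convergent sequence. Your proof, by contrast, is constructive and self-contained: you explicitly build $\mu_n$ by discretizing $\mu$ on a fine grid inside $[-n,n]^d$, parking the leftover mass at the origin, and then you verify convergence directly via the closed-set criterion of Theorem~\ref{theorem_weak_closed}. The paper's approach is shorter but relies on an external black box; yours avoids that dependence and uses only tools already stated in the paper. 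Your handling of the escaping mass via the additive $r_n$ term is clean, and the argument that $\mu(F^{1/n})\downarrow\mu(F)$ by continuity from above (legitimate since $\mu$ is finite) is exactly what is needed.
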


\begin{proof}
By~\cite[Example 8.1.6 (i)]{bogachev07_II}, finite nonnegative convex combinations of Dirac's measures are dense in the space of Borel probability measures with the weak topology. Since this topological space is metrizable by Theorem~\ref{theorem_weak_metric}, every point $\mu$ has a countable base of open neighborhoods and the lemma follows.
\end{proof}

\begin{corollary}\label{cor_rational_approx}
For every Borel probability measure $\mu$ on $\R^d$ there is a sequence of probability measures $\mu'_n$ weakly convergent to $\mu$ such that each $\mu'_n$ is a finite nonnegative rational combination of Dirac's measures on $\R^d$.
\end{corollary}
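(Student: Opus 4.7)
The plan is to combine Lemma~\ref{lemma_approx} with a rational perturbation step. Starting from a sequence $\mu_n=\sum_{i=1}^{k_n} c_{n,i}\delta_{x_{n,i}}$ with $c_{n,i}\in(0,1]$ that converges weakly to $\mu$, I would produce, for each $n$, a finite nonnegative rational combination $\mu'_n$ of Dirac measures with $d_P(\mu_n,\mu'_n)\le 1/n$, where $d_P$ denotes the L\'evy--Prohorov metric from Theorem~\ref{theorem_weak_metric}. Since $d_P$ metrizes the weak topology, we have $d_P(\mu_n,\mu)\to 0$, so the triangle inequality then forces $d_P(\mu'_n,\mu)\to 0$, which is exactly the weak convergence $\mu'_n\to\mu$ that the corollary requires.

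To build $\mu'_n$, the only subtlety is that the rational coefficients must still sum to $1$ and be nonnegative, so I cannot perturb each $c_{n,i}$ independently. For fixed $n$, using that each $c_{n,i}>0$, I would pick nonnegative rationals $c'_{n,1},\ldots,c'_{n,k_n-1}$ approximating $c_{n,1},\ldots,c_{n,k_n-1}$ so closely that $\sum_{i=1}^{k_n-1}|c_{n,i}-c'_{n,i}|$ is smaller than both $1/(2n)$ and $c_{n,k_n}/2$, and then set $c'_{n,k_n}\mathrel{\mathop:}= 1-\sum_{i=1}^{k_n-1}c'_{n,i}$. This last coefficient is automatically rational, lies within $c_{n,k_n}/2$ of $c_{n,k_n}$, and is therefore nonnegative. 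Define $\mu'_n\mathrel{\mathop:}=\sum_{i=1}^{k_n}c'_{n,i}\delta_{x_{n,i}}$, which is then the desired finite nonnegative rational combination of Dirac measures on $\R^d$, and a probability measure by construction.

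For the metric estimate, I observe that $\mu_n$ and $\mu'_n$ are supported on the same finite set $\{x_{n,1},\ldots,x_{n,k_n}\}$, so for every Borel set $B\subseteq\R^d$,
\[
|\mu_n(B)-\mu'_n(B)|\le\sum_{i=1}^{k_n}|c_{n,i}-c'_{n,i}|<\frac{1}{n}.
\]
In particular $\mu'_n(B)\le\mu_n(B)+1/n\le\mu_n(B^{1/n})+1/n$ and symmetrically with $\mu_n$ and $\mu'_n$ swapped, so the definition of the L\'evy--Prohorov metric gives $d_P(\mu_n,\mu'_n)\le 1/n$, and the corollary follows from the triangle inequality as outlined above. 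I do not anticipate a serious obstacle: the whole statement is really just a routine ``clean-up'' of Lemma~\ref{lemma_approx}. The one point that needs a bit of care is preserving both the probability-measure normalization and the nonnegativity of every coefficient simultaneously, which is handled cleanly by absorbing the rounding error into a single coordinate and exploiting the strict positivity $c_{n,i}>0$ guaranteed by Lemma~\ref{lemma_approx}.
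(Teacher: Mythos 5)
Your proof is correct. Both your argument and the paper's proof start from Lemma~\ref{lemma_approx} and then round the coefficients $c_{n,i}$ to rationals, but the two proofs handle the two technical points differently. To keep the coefficients nonnegative and summing to $1$, you absorb the rounding error into a single coefficient $c'_{n,k_n}:=1-\sum_{i<k_n}c'_{n,i}$, using the strict positivity $c_{n,k_n}>0$ to guarantee $c'_{n,k_n}\ge 0$; the paper instead rounds \emph{every} $c_{n,i}$ down slightly to a rational $c'_{n,i}\in((1-1/n)c_{n,i},c_{n,i}]$ and then globally renormalizes by the rational sum $c'=\sum_i c'_{n,i}$. To verify weak convergence, you bound the L\'evy--Prohorov distance $d_P(\mu_n,\mu'_n)\le 1/n$ directly from the pointwise coefficient error and conclude by the triangle inequality via Theorem~\ref{theorem_weak_metric}; the paper instead shows $(1-1/n)\int f\,\mathrm{d}\mu_n\le\int f\,\mathrm{d}\mu'_n\le\tfrac{n}{n-1}\int f\,\mathrm{d}\mu_n$ for nonnegative bounded continuous $f$ and uses the definition of weak convergence. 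Your metric route is arguably the more systematic of the two since it reuses the same L\'evy--Prohorov estimate that the paper itself deploys again in the proof of Corollary~\ref{cor_approx_gp}, whereas the paper's integral bound avoids invoking the metric characterization at this point; both arguments are of comparable length and difficulty and either would fit the paper's framework.
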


\begin{proof}
For every $n$, let $\mu_n=\sum_{i=1}^{k_n} c_{n,i}\delta_{x_{n,i}}$ be the
measure from Lemma~\ref{lemma_approx}. For every $i\in[k_n]$, select a rational
number $c'_{n,i} \in ((1-1/n)\cdot c_{n,i},c_{n,i}]$. Let $c'\mathrel{\mathop:}=\sum_{i=1}^{k_n}
c'_{n,i}$. It is easy to see that $\mu'_n\mathrel{\mathop:}=\sum_{i=1}^{k_n}
(c'_{n,i}/c')\cdot\delta_{x_{n,i}}$ is a probability measure and that the
sequence $\mu'_n$ weakly converges to $\mu$, since for every bounded continuous
function $f:\R^d \rightarrow \R$ we have $(1-1/n) \int_{\R^d} \!f\,
\mathrm{d}\mu_n \le \int_{\R^d} \!f\, \mathrm{d}\mu'_n \le (n/(n-1))
\int_{\R^d} \!f\, \mathrm{d}\mu_n$.  
\end{proof}

Let $\mu$ be a finite nonnegative linear combination of Dirac's measures on $\R^d$. The \emph{support} $\supp(\mu)$ of $\mu$ is the set of points $x$ such that $\mu(\{x\})>0$. For our application it is convenient to approximate a given Borel measure with uniform discrete measures whose support is in general position.

\begin{corollary}\label{cor_approx_gp}
Let $\mu_1, \dots, \mu_{d+1}$ be Borel probability measures on $\R^d$. For every $i\in[d+1]$, there is a sequence of probability measures $\mu''_{i,n}$ weakly convergent to $\mu_i$ such that each $\mu''_{i,n}$ is of the form $(1/k_{i,n})\sum_{j=1}^{k_{i,n}} \delta_{x_{i,n,j}}$ where $x_{i,n,j} \in \R^d$,
and moreover, the supports $\supp(\mu''_{1,n}),\allowbreak \dots,\allowbreak \supp(\mu''_{d+1,n})$ are pairwise disjoint and $\supp(\mu''_{1,n}) \cup \dots \cup \supp(\mu''_{d+1,n})$ is in general position.
\end{corollary}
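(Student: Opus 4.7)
The plan is to start from the rational approximations provided by Corollary~\ref{cor_rational_approx}, rewrite each one as a uniform combination of Dirac's measures by clearing a common denominator with multiplicities, and then apply a vanishing perturbation of the support points to impose pairwise disjointness and general position across all $d+1$ measures.

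First, for each $i\in[d+1]$ apply Corollary~\ref{cor_rational_approx} to obtain a sequence $\mu'_{i,n}$ of probability measures converging weakly to $\mu_i$, with $\mu'_{i,n}=\sum_{j}(p_{i,n,j}/q_{i,n})\delta_{y_{i,n,j}}$ a nonnegative rational combination using a common denominator $q_{i,n}$, so that $\sum_j p_{i,n,j}=q_{i,n}$. Set $k_{i,n}\mathrel{\mathop:}= q_{i,n}$ and list each $y_{i,n,j}$ with multiplicity $p_{i,n,j}$ to rewrite $\mu'_{i,n}=(1/k_{i,n})\sum_{j=1}^{k_{i,n}}\delta_{z_{i,n,j}}$.

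Next, for each fixed $n$, replace each $z_{i,n,j}$ by some $x_{i,n,j}\in\R^d$ with $\|x_{i,n,j}-z_{i,n,j}\|<1/n$, chosen so that the combined family $\{x_{i,n,j}:i\in[d+1],\,j\in[k_{i,n}]\}$ consists of pairwise distinct points in general position. Since the bad locus in the configuration space $(\R^d)^K$ (with $K=\sum_{i}k_{i,n}$) where two points coincide or some $d+1$ points are affinely dependent is a finite union of proper algebraic subvarieties, hence of Lebesgue measure zero, a sufficiently small generic shift of the $z_{i,n,j}$ does the job. Define $\mu''_{i,n}\mathrel{\mathop:}=(1/k_{i,n})\sum_{j=1}^{k_{i,n}}\delta_{x_{i,n,j}}$; by construction the supports $\supp(\mu''_{1,n}),\dots,\supp(\mu''_{d+1,n})$ are pairwise disjoint and their union is in general position.

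Finally, verify that $\mu''_{i,n}\to\mu_i$ weakly by invoking the L\'evy--Prohorov metric from Theorem~\ref{theorem_weak_metric}. For any Borel set $B\subseteq\R^d$, whenever $z_{i,n,j}\in B$ we have $x_{i,n,j}\in B^{1/n}$, so $\mu'_{i,n}(B)\le\mu''_{i,n}(B^{1/n})$; symmetrically $\mu''_{i,n}(B)\le\mu'_{i,n}(B^{1/n})$. Hence $d_P(\mu''_{i,n},\mu'_{i,n})\le 1/n\to 0$, and combining with the weak convergence $\mu'_{i,n}\to\mu_i$ via the triangle inequality for $d_P$ gives $\mu''_{i,n}\to\mu_i$. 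The only real subtlety is arranging joint general position simultaneously across all $d+1$ supports in a single perturbation, which is handled cleanly by the measure-zero bound on the bad locus; everything else is bookkeeping.
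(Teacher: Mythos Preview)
Your proof is correct and follows essentially the same approach as the paper: start from the rational approximations of Corollary~\ref{cor_rational_approx}, clear denominators to write each as a uniform average of Dirac measures (with repetitions), perturb all support points by less than $1/n$ into general position, and conclude via the L\'evy--Prohorov estimate $d_P(\mu'_{i,n},\mu''_{i,n})\le 1/n$ together with the triangle inequality. The only cosmetic difference is that the paper chooses a single common denominator $s_n$ across all $i$ for fixed $n$, so that all $d+1$ approximating measures have supports of equal size; your $k_{i,n}$ may vary with $i$, but the statement does not require equal sizes, so this is immaterial.
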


\begin{proof}
For every $i\in[d+1]$, let $\mu'_{i,n}$ be a sequence of measures from Corollary~\ref{cor_rational_approx} weakly convergent to $\mu_i$. Suppose that $\mu'_{i,n} = \sum_{j=1}^{k'_{i,n}} c'_{i,n,j}\delta_{x'_{i,n,j}}$. Since the coefficients $c'_{i,n,j}$ are rational, we have $c'_{i,n,j}=r_{i,n,j}/s_n$ for some positive integers $r_{i,n,j}$ and $s_n$.

For every $n$, we define the measures $\mu''_{i,n}$ as follows. For every $i\in[d+1]$ and for every $x'_{i,n,j}\in \supp(\mu'_{i,n})$, we select a set $X''_{i,n,j}$ of $r_{i,n,j}$ unique points, each of them  
at distance smaller than $1/n$ from $x'_{i,n,j}$, and such that for every fixed $n$, the set $\bigcup_{i,j} X''_{i,n,j}$ of all these $(d+1)\cdot s_n$ new points is in general position. For every $n$ and $i$, let $X''_{i,n}\mathrel{\mathop:}=\bigcup_{j} X''_{i,n,j}$ be the set of the $s_n$ new points created from the points $x'_{i,n,j}$. We set $\mu''_{i,n} \mathrel{\mathop:}= \sum_{x''\in X''_{i,n}} (1/s_n)\delta_{x''}$.

We use Theorem~\ref{theorem_weak_metric} to verify the convergence of the measures $\mu''_{i,n}$. We claim that $\mu'_{i,n}$ and $\mu''_{i,n}$ are $(1/n)$-close in the L\'evy--Prohorov metric. Indeed, for every Borel set $B\subseteq \R^d$ and for every point $x'_{i,n,j}\in \supp(\mu'_{i,n})$, if $x'_{i,n,j}\in B$ then $X''_{i,n,j}\subset B^{1/n}$. This implies that $\mu'_{i,n}(B) \le \mu''_{i,n}(B^{1/n})$. The inequality $\mu''_{i,n}(B) \le \mu'_{i,n}(B^{1/n})$ follows analogously.

Since $d_P(\mu'_{i,n},\mu''_{i,n})<1/n$ and $d_P(\mu'_{i,n}, \mu_i) \rightarrow 0$, we conclude that $d_P(\mu''_{i,n}, \mu_i) \rightarrow 0$ and the statement follows.
\end{proof}

\subsection{Proof of Theorem~\ref{t:pach_continuous}}

Let $\gamma(d)\mathrel{\mathop:}=2^{-2^{d^2 + 3d}}$. Let $\mu_{1}, \dots, \mu_{d+1}$ be Borel probability measures on $\R^d$. For $i\in[d+1]$, let $\mu''_{i,n}$ be the sequence of measures from Corollary~\ref{cor_approx_gp}.

For every $n$, we apply Theorem~\ref{t:bounds}(2) to the supports of the measures $\mu''_{1,n}, \dots, \mu''_{d+1,n}$.
We obtain sets $Y_{1,n}, \dots, Y_{d+1,n}$ and a point $\pp_n$ such that $Y_{i,n} \subseteq \supp(\mu''_{i,n})$, the point $\pp_n$ is in all $(Y_{1,n}, \dots, Y_{d+1,n})$-simplices and $\mu''_{i,n}(Y_{i,n})\ge \gamma(d)$. 
Moreover, we know from the proof of Theorem~\ref{t:bounds}(2) that there is an
arrangement $\H_n$ of $d+1$ hyperplanes in general position such that the sets
$Y_{i,n}$ are in the interiors of the corner regions of $\H_n$ and
$\pp_n$ is in the interior of the simplex $\Delta(\H_n)$ determined by this
arrangement. In this section, we denote the corner regions of $\H_n$ by
$Z_{i,n}$.

The key observation is that we can encode the output of
Theorem~\ref{t:bounds}(2) as a $(d+2)$-tuple of points of $\R^d$ that consists
of the vertices $\hh_{1,n}, \dots, \hh_{d+1,n}$ of the simplex $\Delta(\H_n)$
and the point $\pp_n$. In order to handle passing to the limit, we enrich this
data by a $((d+1)\cdot(2^d-1))$-tuple of vectors defined as follows.
For every $F\subseteq [d+1]$, let
$\sigma_{F,n}$ be the face $\conv(\{\hh_{j,n}; j\in F\})$ of $\Delta(\H_n)$.
In addition, if $i \in F$ and $F \neq \{i\}$,
let $\uu_{i,F,n}$ be a unit vector satisfying the following two conditions
(see Figure~\ref{f:uF}):

\begin{enumerate} 
\item[a)] The ray $\{\hh_{i,n}-\lambda\uu_{i,F,n}; \; \lambda\ge 0\}$ intersects the relative interior of $\sigma_{F,n}$.
\item[b)] Let $H_{i,F,n}$ be the affine hyperplane in the affine hull of $\sigma_{F,n}$ orthogonal to $\uu_{i,F,n}$ and containing $\hh_{i,n}$. Then $H_{i,F,n}\cap \sigma_{F,n} = \{\hh_{i,n}\}$. Equivalently, for every $j\in F\setminus \{i\}$, we have $\uu_{i,F,n} \cdot (\hh_{j,n}-\hh_{i,n}) <0$. Here we write $\uu\cdot\vv$ for the dot product of $\uu$ and $\vv$.
\end{enumerate}
In particular, if $F=\{i,j\}$, then $\uu_{i,F,n}=\vv_{i,j,n}\mathrel{\mathop:}=(\hh_{i,n}-\hh_{j,n})/\lVert\hh_{i,n}-\hh_{j,n}\rVert$. Here $\lVert \vv \rVert$ denotes the Euclidean norm of $\vv$.

The existence of the vector $\uu_{i,F,n}$ satisfying both conditions a) and b)
is not immediately obvious, especially when the dimension of the face
$\sigma_{F,n}$ is large. Let $C_{i,F,n} \mathrel{\mathop:}= (Z_{i,n}\cap
\aff(\sigma_{F,n}))-\hh_i$; that is, $C_{i,F,n}$ is the $(|F|-1)$-dimensional
convex cone with apex at the origin generated by the vectors $\vv_{i,j,n}$,
$j\in F\setminus \{i\}$. Condition a) now says that $\uu_{i,F,n}$ belongs to
the relative interior of $C_{i,F,n}$ if $|F| \geq 3$. Similarly, condition b)
says that $\uu_{i,F,n}$ belongs to the relative interior of the dual cone $C'_{i,F,n} \mathrel{\mathop:}= \{\yy \in \aff(\sigma_{F,n}); \; \yy \cdot \vv_{i,j,n} \ge 0, \; j\in F\setminus \{i\}\}$. The existence of $\uu_{i,F,n}$ thus follows from the following generalization of Farkas' lemma. 

\begin{lemma}
Let $C$ be a simplicial cone in $\mathbb{R}^k$ with apex in the origin,
where a \emph{simplicial cone} in $\mathbb{R}^k$ is a convex
hull of $k$ extremal rays emanating from the apex with linearly independent directions.
Let $C'\mathrel{\mathop:}= \{\yy \in \mathbb{R}^k; \; \yy \cdot \xx \ge 0 \text{ for all } \xx\in C\}$ be the dual cone of $C$. Then their intersection $C\cap C'$ has nonempty interior.
\end{lemma}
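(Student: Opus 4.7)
The plan is to use the polar-cone duality for polyhedral convex cones. First I would observe that if $\vv_1,\dots,\vv_k$ denote the extremal generators of $C$, then they form a linear basis of $\R^k$, and a direct computation using the dual basis $\uu_1,\dots,\uu_k$ (characterized by $\uu_i\cdot\vv_j=\delta_{ij}$) shows that $C'$ is itself the simplicial cone generated by the $\uu_i$. In particular, both $C$ and $C'$ are closed pointed polyhedral cones, and they are mutually polar: $C^\ast=C'$ and $(C')^\ast=C$.

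Next I would invoke the standard polyhedral duality
\[
(C\cap C')^\ast \;=\; C^\ast+(C')^\ast \;=\; C+C',
\]
where the sum is polyhedral and hence automatically closed. Recalling that a closed convex cone has nonempty interior in $\R^k$ if and only if its polar is pointed (contains no line), the task reduces to showing that $C+C'$ contains no line.

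The only substantive step is the observation that $C\cap(-C')=\{\OO\}$: if $\vv\in C$ and $-\vv\in C'$, then pairing $-\vv$ with $\vv\in C$ yields $-\|\vv\|^2\ge 0$, forcing $\vv=\OO$. From this, pointedness of $C+C'$ follows formally: any decomposition $\xx=\aa+\bb$ and $-\xx=\cc+\dd$ with $\aa,\cc\in C$ and $\bb,\dd\in C'$ gives $\aa+\cc\in C\cap(-C')=\{\OO\}$, and pointedness of the simplicial cones $C$ and $C'$ individually then forces all four summands, and hence $\xx$, to vanish. I do not anticipate any real obstacle; the polar-duality identity is classical for polyhedral cones, and everything else is an elementary inner-product manipulation.
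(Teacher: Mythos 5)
Your proof is correct, but it takes a genuinely different route from the paper's. The paper argues by contradiction: if $C\cap C'$ had empty interior, then (since both $C$ and $C'$ are full-dimensional, being simplicial) the non-strict separation theorem would produce a unit normal $\aa$ through the origin with $\aa\cdot\xx\ge 0$ on $C$ and $\aa\cdot\yy\le 0$ on $C'$; the first condition forces $\aa\in C'$, and then plugging $\yy=\aa$ into the second gives $\aa\cdot\aa\le 0$, a contradiction. You instead pass to the polar: using the polyhedral identity $(C\cap C')^\ast=C^\ast+(C')^\ast=C+C'$ and the fact that a closed convex cone is full-dimensional iff its polar is pointed, you reduce the claim to pointedness of $C+C'$, which you derive from $C\cap(-C')=\{\OO\}$. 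The underlying inner-product trick (pairing a vector that lies in one cone against the defining inequalities of the other to force $\|\cdot\|^2\le 0$) is essentially the same in both proofs, and your observation $C\cap(-C')=\{\OO\}$ is the same one-liner that ends the paper's proof; the difference is the scaffolding around it. The paper's route is shorter and more self-contained (a single application of the separating hyperplane theorem), while yours front-loads more convex-cone duality machinery --- biconjugation, the Minkowski-sum formula for polars of intersections, the full-dimensional/pointed correspondence --- in exchange for a purely algebraic finish that never invokes a separation argument explicitly. One small economy you could make: the explicit dual-basis description of $C'$ is not needed, since $(C')^\ast=C$ already follows from biduality of closed convex cones.
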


\begin{proof}
Suppose that the interior of $C\cap C'$ is empty. Since $C$ is simplicial both $C$ and $C'$ have dimension $k$ and thus nonempty interior.
By the non-strict version of the hyperplane separation theorem, there is a hyperplane $H$ separating the interiors of $C$ and $C'$ and passing through the origin. That is, there is a vector $\aa \in \mathbb{R}^k$ such that $\lVert \aa \rVert = 1$, $\aa\cdot\xx\ge 0$ for all $\xx \in C$, and $\aa\cdot\yy\le 0$ for all $\yy \in C'$. This implies that $\aa\in C'$, and consequently $\aa\cdot\aa \le 0$, which is a contradiction.
\end{proof}

Clearly, for every $F\subseteq [d+1]$ such that $i\in F$ and $F \neq \{i\}$, 
and for every
$\lambda\ge 0$, the point $\hh_{i,n}+\lambda\uu_{i,F,n}$ is contained in
$Z_{i,n}$. Moreover, $Z_{i,n}$ is the convex hull of the $d$ rays
$\{\hh_{i,n}+\lambda\vv_{i,j,n}; \; \lambda\ge 0\}$, for $j\in[d+1] \setminus \{i\}$. 

\begin{figure}
\includegraphics{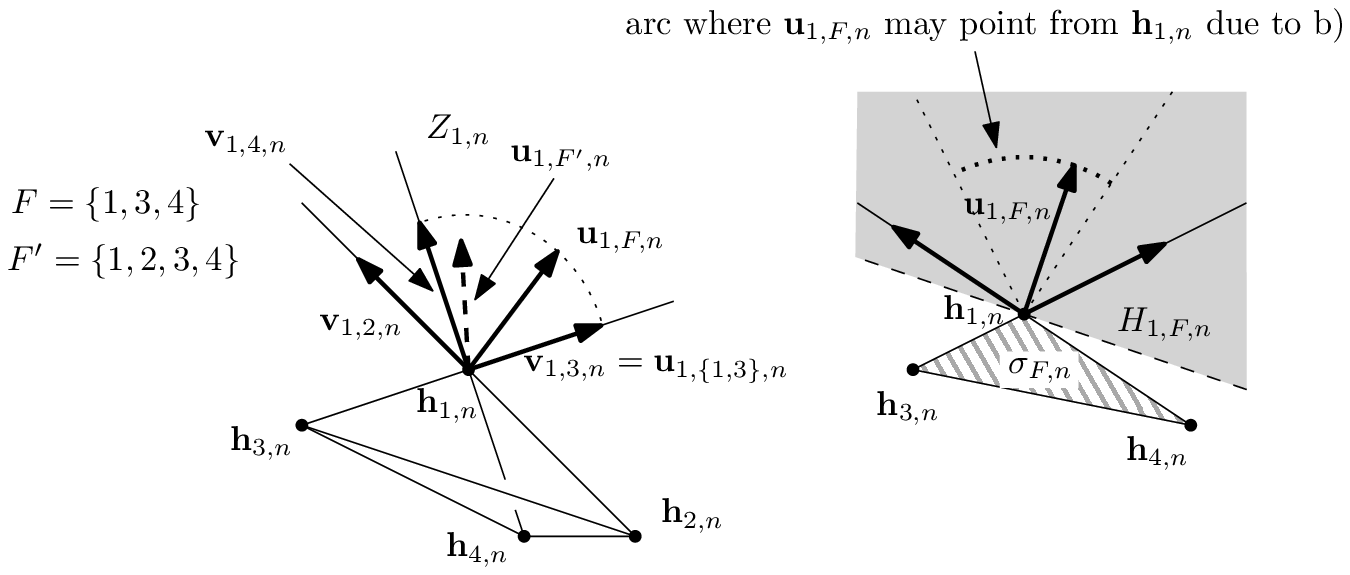}
\caption{Some of the (unit) vectors $\uu_{1,F,n}$ and $\vv_{1,j,n}$ in the
$3$-dimensional case. The right part of the picture shows the affine hull of $\sigma_{F,n}$ for $F = \{1,3,4\}$}
\label{f:uF}
\end{figure}

Let $\xx_n$ be the point in $\R^{(d+1)2^d+1}$ representing the ordered sequence of the points $\pp_n,\allowbreak \hh_{1,n},\dots,\allowbreak \hh_{d+1,n}$ and the vectors $\uu_{i,F,n}$.

Let $B$ be a closed ball centered in the origin such that for every $i\in[d+1]$, we have $\mu_{i}(B)>1-\gamma(d)/2$.
By Theorem~\ref{theorem_weak_closed}, there is an $n_0$ such that for every $n>n_0$ and for every $i\in[d+1]$, we have $\mu''_{i,n}(B)>1-\gamma(d)$.

We claim that for every $n>n_0$, the whole simplex $\Delta(\H_n)$ is contained
in $B$. Suppose the contrary. Then there is a point $\mathbf{q} \in \interior(
\Delta(\H_n))\setminus B$, which can be strictly separated from $B$ by a
hyperplane $H$. By Claim~\ref{c:cut_hyp} below applied to the generic Pach's
configuration $(Y_{1,n},\dots,Y_{d+1,n}, \mathbf{q})$, some of the sets
$Y_{i,n}$ is separated by $H$ from $B$. This is a contradiction as
$\mu''_{i,n}(B) + \mu''_{i,n}(Y_{i,n}) >1$.

\begin{claim}
\label{c:cut_hyp}
  Let $(Y_1, \dots, Y_{d+1}, \pp)$ be a generic Pach's configuration. Let $H$
  be any hyperplane passing through $\pp$. Then for any of the two open
  halfspaces determined by $H$ there is $\ell \in [d+1]$ such that $Y_\ell$ is fully
  contained in that halfspace.
\end{claim}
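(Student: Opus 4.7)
My plan is to argue by contradiction. First I would fix one of the open halfspaces $H^-_o$ bounded by $H$, let $H^+ \mathrel{\mathop:}= \R^d \setminus H^-_o$ be the closed opposite halfspace, and assume that no $Y_\ell$ is contained in $H^-_o$; equivalently, every $Y_i$ meets $H^+$. Picking $\yy_i \in Y_i \cap H^+$ for each $i \in [d+1]$, the $(Y_1, \dots, Y_{d+1})$-simplex $S \mathrel{\mathop:}= \conv(\yy_1, \dots, \yy_{d+1})$ lies in $H^+$ by convexity, while the generic Pach's configuration property gives $\pp \in S$. Since $\pp$ also lies on $H = \partial H^+$, the linear functional defining $H^+$ attains its minimum over $S$ at $\pp$, which forces $\pp$ to sit in the face of $S$ cut out by $H$, namely $\pp \in \conv\{\yy_i : \yy_i \in H\}$.

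From here I would derive a contradiction with the general position of $Y_1 \cup \cdots \cup Y_{d+1} \cup \{\pp\}$. Set $I \mathrel{\mathop:}= \{i \in [d+1] : \yy_i \in H\}$. The inclusion $\pp \in \conv\{\yy_i : i \in I\}$ produces an affine dependence among $\{\pp\} \cup \{\yy_i : i \in I\}$. If $|I| = 0$, the convex hull is empty and we already contradict $\pp \in S \cap H$. If $1 \le |I| \le d$, we have a set of at most $d+1$ points of $Y_1 \cup \cdots \cup Y_{d+1} \cup \{\pp\}$ that is affinely dependent, directly violating general position. The remaining case $|I| = d+1$ calls for a second use of general position: the $d+1$ points $\yy_1, \dots, \yy_{d+1}$ are then affinely independent and all lie in $H$, so they affinely span the entire hyperplane $H$; combined with $\pp \in H$ this means $\pp, \yy_1, \dots, \yy_d$ are $d+1$ points sitting inside the $(d-1)$-dimensional flat $H$, hence affinely dependent, again a contradiction.

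I expect the main (though mild) obstacle to be the case $|I| = d+1$, which must be handled separately because general position only guarantees affine independence of sets of size at most $d+1$; the trick is to observe that once $\yy_1, \dots, \yy_{d+1}$ span $H$ affinely, adding $\pp$ and dropping any single $\yy_i$ yields a forbidden $d+1$-point affine dependence. Everything else is a straightforward convexity remark combined with the definition of a generic Pach's configuration, so the overall argument should be short.
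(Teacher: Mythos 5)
Your proof is correct and takes essentially the same route as the paper: assume each $Y_i$ meets the closed halfspace $H^+$, choose $\yy_i \in Y_i \cap H^+$, deduce that $\pp$ lies in the convex hull of those $\yy_i$ that sit on $H$, and contradict general position. The paper compresses the final contradiction into a single sentence; your explicit case split on $|I|$ just spells out the details, although the intermediate remark in the $|I|=d+1$ case that the affinely independent $\yy_i$ ``affinely span the entire hyperplane $H$'' is not quite right (affinely independent $d+1$ points span a $d$-flat, not the $(d-1)$-flat $H$) --- the clean observation there is simply that $d+1$ points lying in $H$ must be affinely dependent, which is exactly what the final sentence of your argument in that case already says.
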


\begin{proof}
  Let $H^+$ be the closed halfspace opposite to the open halfspace in which we
  look for $Y_\ell$. Suppose for contradiction that each $Y_i$ meets $H^+$. Let $\yy_i$ be a point from $Y_i \cap H^+$, for every $i \in [d+1]$.
  Since $\pp$ belongs to the simplex $\yy_1\yy_2\dots\yy_{d+1}$, it belongs to the convex hull of those $\yy_i$ that are in
  $H$. This contradicts the general position condition of a generic Pach's
  configuration.
\end{proof}

It follows that $\pp_n \in B$ and $\hh_{i,n} \in B$ for all $i$. Since
$\lVert\uu_{i,F,n}\rVert=1$ for every $i$ and $F$, the whole sequence $\xx_n$ is contained in a
compact subset of $\R^{(d+1)2^d+1}$, and so it has a convergent
subsequence $\xx_{n_k}$ with a limit $\xx$.  Let $\hh_i\mathrel{\mathop:}=\lim_k \hh_{i,n_k}$
for every $i\in[d+1]$, $\pp\mathrel{\mathop:}=\lim_k \pp_{n_k}$, and $\uu_{i,F}\mathrel{\mathop:}=\lim_k
\uu_{i,F,n_k}$ for every $i\in[d+1]$ and $F\subseteq [d+1]$, $i\in F$, $F \neq
\{i\}$.

For every $i\in[d+1]$, the point $\hh_i$ and the vectors $\uu_{i,F}$ determine
a (possibly degenerate) convex cone $Z_i$ as follows:
\[
  Z_i\mathrel{\mathop:}=\conv\left(\bigcup_{F\subseteq [d+1], i\in F, F\neq\{i\}} \{\hh_i+\lambda \uu_{i,F}, \; \lambda\ge 0\}\right).
\] 

Note that if $\hh_1, \dots, \hh_{d+1}$ are affinely independent, and thus form a nondegenerate simplex, then
the cones $Z_i$ correspond to the corner regions $C_i$ defined in Section~\ref{s:corner}, and are limits of the regions $Z_{i,n_k}$, in a certain sense that we define shortly. However, if $\hh_1, \dots, \hh_{d+1}$ span a subspace of dimension at most $d-1$ (some of the points may even coincide), these points alone do not provide enough information to reconstruct the cones $Z_i$. In particular, if $\hh_i$ is in the convex hull of the vertices $\hh_j$ for $j\in[d+1]\setminus\{i\}$, we need some of the vectors $\uu_{i,F}$, too.

We create an ``epsilon of room''~\cite{Tao10_epsilon_I} around $Z_i$.
For every $m\in \mathbb{N}$, we define a neighborhood $U_{i,m}$ of $Z_i$ as an
infinite union of (possibly nonconvex) open cones whose apices $\aa$ are close to
$\hh_i$ and whose rays have directions close to the directions of the
rays of $Z_i$ (see Figure~\ref{fig_epsilon_room}): 
\[
U_{i,m}\mathrel{\mathop:}=\left\{\aa+\ww; \; \lVert\aa-\hh_i \rVert < 1/m, \, \ww \neq 0, \text{ and } \dist\left(\hh_i + \frac{\ww}{\lVert\ww \rVert}, Z_i\right) < 1/m\right\}.
\]
We show that for each $i\in [d+1]$, the cone $Z_i$ is a limit of the cones $Z_{i,n_k}$ in the following sense.
\begin{claim}\label{claim_limita_kuzelu}
\begin{enumerate}
\item[{\rm (1)}] $Z_i$ is in the pointwise limit of $Z_{i,n_k}$. That is, for every $\zz \in Z_i$, there is a sequence of points $\zz_k \in Z_{i,n_k}$ converging to $\zz$.
\item[{\rm (2)}] 
$Z_i$ is an intersection of the sequence of open neighborhoods $U_{i,m}$ and for every $m$, if $\xx_{n_k}$ is sufficiently close to $\xx$ then $Z_{i,n_k}\subset U_{i,m}$.
\end{enumerate}
\end{claim}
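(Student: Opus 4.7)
The proof exploits the componentwise convergence in $\xx_{n_k}\to\xx$, which encodes $\hh_{i,n_k}\to\hh_i$ and $\uu_{i,F,n_k}\to\uu_{i,F}$; in particular $\vv_{i,j,n_k}\to\vv_{i,j}$.

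For part (1), I first observe that $Z_i = \hh_i + K_i$, where $K_i$ is the convex cone generated by the vectors $\uu_{i,F}$ (the convex hull of a union of rays through the common apex $\hh_i$ equals $\hh_i$ plus the conic hull of the directions). So any $\zz \in Z_i$ decomposes as $\zz = \hh_i + \sum_F \mu_F \uu_{i,F}$ with $\mu_F \geq 0$, where the sum is over $F\subseteq [d+1]$ with $i\in F$ and $F\neq\{i\}$. I then set $\zz_k := \hh_{i,n_k} + \sum_F \mu_F \uu_{i,F,n_k}$, which converges to $\zz$ by continuity. To confirm $\zz_k \in Z_{i,n_k}$, condition a) in the definition of $\uu_{i,F,n_k}$ places each such vector in the convex cone $C_{i,F,n_k}$ generated by $\{\vv_{i,j,n_k}: j\in F\setminus\{i\}\}$, which is a subset of $Z_{i,n_k} - \hh_{i,n_k}$; nonnegative combinations remain in this convex cone.

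For part (2), I first verify $Z_i = \bigcap_m U_{i,m}$. Any $\zz = \hh_i + \ww \in Z_i$ with $\ww \neq \OO$ satisfies $\dist(\hh_i + \ww/\lVert\ww\rVert, Z_i)=0$, because $Z_i - \hh_i$ is a convex cone closed under positive scaling, so $\zz \in U_{i,m}$ with $\aa := \hh_i$; the apex $\zz = \hh_i$ itself fits in $U_{i,m}$ via a perturbation $\aa := \hh_i - \varepsilon\uu_{i,F}$, $\ww := \varepsilon\uu_{i,F}$ for small $\varepsilon>0$. Conversely, any $\zz \in \bigcap_m U_{i,m}$ with $\zz \neq \hh_i$ admits sequences $\aa_m\to\hh_i$ and $\ww_m:=\zz-\aa_m\to\zz-\hh_i$; the defining conditions of $U_{i,m}$ together with closedness of $Z_i$ force $\hh_i + (\zz-\hh_i)/\lVert\zz-\hh_i\rVert \in Z_i$, whence $\zz \in Z_i$ by the cone property.

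The main obstacle is showing $Z_{i,n_k} \subseteq U_{i,m}$ for large $k$. For $\zz \in Z_{i,n_k}$, I set $\aa := \hh_{i,n_k}$ (which lies within $1/m$ of $\hh_i$ once $\xx_{n_k}$ is close enough to $\xx$) and $\ww := \zz - \hh_{i,n_k}$; the degenerate case $\ww = \OO$ is handled by a small perturbation along some $\uu_{i,\{i,j\},n_k}$. The nontrivial estimate $\dist(\hh_i + \ww/\lVert\ww\rVert, Z_i) < 1/m$ reduces, by a contradiction and subsequence argument based on compactness of the unit sphere, to upper semicontinuity of conic hulls: if $\vv_l^{(j)}\to\vv^{(j)}$ in $\R^d$ for $j\in[d+1]\setminus\{i\}$ and $\uu_l \to \uu^*$ with $\uu_l \in \text{cone}(\vv_l^{(j)}:j)$, then $\uu^* \in \text{cone}(\vv^{(j)}:j)$. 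The subtlety here is that the nonnegative coefficients in an expansion $\uu_l = \sum_j \mu_{j,l} \vv_l^{(j)}$ can blow up when the limit generators become linearly dependent; a standard compactness argument (normalize by $\max_j \mu_{j,l}$, extract a convergent subsequence to obtain a nontrivial nonnegative relation among the $\vv^{(j)}$, use it to reduce the number of generators, and iterate) resolves this and completes the proof.
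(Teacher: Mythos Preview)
Your argument for part~(1) and for the identity $Z_i=\bigcap_m U_{i,m}$ is fine and matches the paper. The gap is in the last paragraph, where you reduce the inclusion $Z_{i,n_k}\subset U_{i,m}$ to the statement that conic hulls are upper semicontinuous: if $\vv_l^{(j)}\to\vv^{(j)}$ and unit vectors $\uu_l\in\mathrm{cone}\bigl(\vv_l^{(j)}:j\bigr)$ converge to $\uu^*$, then $\uu^*\in\mathrm{cone}\bigl(\vv^{(j)}:j\bigr)$. This statement is \emph{false} in general, and your ``normalize, find a nonnegative relation, reduce the number of generators, iterate'' sketch does not repair it. Concretely, in $\R^2$ take $\vv_l^{(1)}=(\cos(1/l),\sin(1/l))$ and $\vv_l^{(2)}=(-\cos(1/l),\sin(1/l))$, so $\vv^{(1)}=(1,0)$, $\vv^{(2)}=(-1,0)$, and set $\uu_l=(0,1)=\tfrac1{2\sin(1/l)}\bigl(\vv_l^{(1)}+\vv_l^{(2)}\bigr)$. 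Then $\uu^*=(0,1)\notin\mathrm{cone}\bigl((1,0),(-1,0)\bigr)$. Your normalization gives the relation $\tfrac12\vv^{(1)}+\tfrac12\vv^{(2)}=0$, but removing either generator leaves a ray that still misses $(0,1)$; there is nothing to iterate on. This is exactly the degeneration depicted in the paper's Figure on converging cones, and it is the reason the extra vectors $\uu_{i,F,n}$ (not just the extreme directions $\vv_{i,j,n}$) were recorded in $\xx_n$ in the first place. Since you index the generators by $j\in[d+1]\setminus\{i\}$, you are only using the $\vv_{i,j,n_k}$ and are aiming at the wrong target cone; the actual target $Z_i-\hh_i$ is the larger cone generated by \emph{all} the $\uu_{i,F}$.

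The paper's proof supplies the missing structural idea. One shows that the rays $\{\hh_{i,n}+\lambda\uu_{i,F,n}:\lambda\ge 0\}$ subdivide $Z_{i,n}$ into simplicial cones $C(\mathcal F,n)$ indexed by maximal chains $\{i\}=F_0\subset F_1\subset\cdots\subset F_d=[d+1]$, and---crucially---each such cone is \emph{acute}: $\uu_{i,F,n}\cdot\uu_{i,K,n}>0$ whenever $F\subset K$ (this uses both conditions a) and b) on the $\uu_{i,F,n}$). For an acute simplicial cone, the blow-up you worried about cannot occur: if $\lambda_1+\cdots+\lambda_d=1$ and the generators are unit vectors with pairwise nonnegative inner products, then $\lVert\lambda_1\vv_1+\cdots+\lambda_d\vv_d\rVert\ge 1/\sqrt d$. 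With this uniform lower bound one gets directly that every admissible direction of $C(\mathcal F,n_k)$ is $\varepsilon$-close to an admissible direction of its limit cone, which lies in $Z_i$. Applying this to each tile gives $Z_{i,n_k}\subset U_{i,m}$ for $k$ large. What your attempt is missing, then, is not a compactness trick but the use of the chain vectors $\uu_{i,F,n}$ to manufacture acuteness; without it the reduction to ``upper semicontinuity of conic hulls'' is simply not valid.
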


Part (1) of Claim~\ref{claim_limita_kuzelu} follows directly from the
definition of $Z_i$ and from the fact that $Z_{i,n_k}$ is a convex hull of rays
$\{\hh_{i,n_k}+\lambda \uu_{i,F,n_k}, \, \lambda\ge 0\}$ for $F\subseteq [d+1],
i\in F, F \neq \{i\}$, and these rays pointwise converge to the rays $\{\hh_i+\lambda \uu_{i,F}, \, \lambda\ge 0\}$. 

It is also clear that $Z_i=\bigcap_{m=1}^{\infty} U_{i,m}$. To establish the rest of part (2) of the claim, we need the full data from the definition of $Z_i$. Since the proof is rather technical, we delegate it into Subsection~\ref{sub_limita_kuzelu}.

We set $F_{i,m}$ as the closure of $U_{i,m}$. Clearly, $F_{i,m+1} \subset U_{i,m}$ for every $m$. By Lemma~\ref{lemma_outer_regular}, 
\begin{equation}\label{eq_prunik_otevrenych}
\mu_i(Z_i)=\inf_{m\in \mathbb{N}} \mu_i(U_{i,m})=\inf_{m\in \mathbb{N}} \mu_i(F_{i,m}). 
\end{equation}
By Claim~\ref{claim_limita_kuzelu}(1) and since $\pp_{n_k}$ is contained in all $(Z_{1,n_k} \dots, Z_{d+1,n_k})$-simplices by Lemma~\ref{l:in_simplex}, we conclude that the point $\pp$ is contained in all $(Z_1, \dots, Z_{d+1})$-simplices. Note that $\pp$ is not necessarily in the interior of these simplices; moreover, the simplices may be degenerate.

\begin{figure}
\begin{center}
\epsfig{file={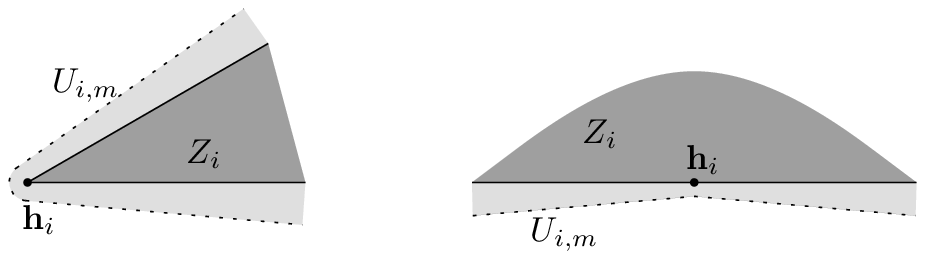}}
\end{center}
\caption{A neighborhood $U_{i,m}$ of a cone $Z_i$. The neighborhood is nonconvex if $\hh_i$ is not an extreme point of $Z_i$ (right)}
\label{fig_epsilon_room}
\end{figure}

It remains to show that $\mu_i(Z_i)\ge \gamma(d)$ for every $i$.
Fix $i\in [d+1]$ and let $\varepsilon >0$. 
By~(\ref{eq_prunik_otevrenych}), there is an $m\in \mathbb{N}$ such that 
\begin{equation}\label{eq_vnejsi_regularita}
\mu_i(Z_i) > \mu_i(F_{i,m}) - \varepsilon.
\end{equation}
By Theorem~\ref{theorem_weak_closed}, there is a $k_0$ such that for all $k>k_0$, we have
\begin{equation}\label{eq_weak_conv}
\ \mu_i(F_{i,m}) > \mu''_{i,n_k}(F_{i,m}) - \varepsilon.
\end{equation}
By Claim~\ref{claim_limita_kuzelu}(2), there is a $k>k_0$ such that $Z_{i,n_k}\subset U_{i,m} \subset F_{i,m}$, and therefore
\begin{equation}\label{eq_containment}
\mu''_{i,n_k}(F_{i,m}) \ge \mu''_{i,n_k}(Z_{i,n_k}).
\end{equation}
Combining~(\ref{eq_vnejsi_regularita}),~(\ref{eq_weak_conv}) and~(\ref{eq_containment}) with the assumption $\mu''_{i,n}(Y_{i,n})\ge \gamma(d)$ we obtain
\[
\mu_i(Z_i) > \mu''_{i,n}(Z_{i,n}) -2\varepsilon \ge \mu''_{i,n}(Y_{i,n})-2\varepsilon \ge \gamma(d)-2\varepsilon. 
\]
Since the $\varepsilon$ can be taken arbitrarily small, the theorem follows.

\subsection{Proof of Claim~\ref{claim_limita_kuzelu}(2)}\label{sub_limita_kuzelu}

Given a point $\aa \in \R^d$ and $d$ unit vectors $\vv_1, \dots, \vv_d \in \R^d$, the
\emph{cone} with \emph{apex} $\aa$ \emph{induced by} $\vv_1, \dots, \vv_d$ is
defined as
\[
\CC(\aa,\vv_1, \dots, \vv_d) \mathrel{\mathop:}= \conv(\{\aa + \lambda\vv_i; \; \lambda \ge 0, \, i \in [m]\}).
\]
If the vectors $\vv_1, \dots, \vv_d \in \R^d$ are linearly independent, the cone $\CC(\aa,\vv_1, \dots, \vv_d)$ is also called \emph{simplicial}, and the rays $\{\aa + \lambda\vv_i, \, \lambda \ge 0\}$, for $i\in [d]$, are called the \emph{extreme rays} of $\CC(\aa,\vv_1, \dots, \vv_d)$.

As we have already observed, the cone $Z_{i,n}$ is a simplicial cone with apex
$\hh_{i,n}$ and is induced by the $d$ vectors $\vv_{i,j,n}$,
$j\in[d+1]\setminus\{i\}$. When attempting to define the limit of the sequence
$Z_{i,n_k}$, a difficulty arises when the maximum angle between pairs of rays
in $Z_{i,n_k}$ approaches $\pi$; see Figure~\ref{f:converge}. For $d\ge 3$, this may happen even if all
pairs of extreme rays form an angle at most $2\pi/3$, or, in general,
$\arccos(-1/(d-1))$. We have introduced the vectors $\uu_{i,F,n}$ to remedy
this difficulty. 

\begin{figure}
  \begin{center}
    \includegraphics{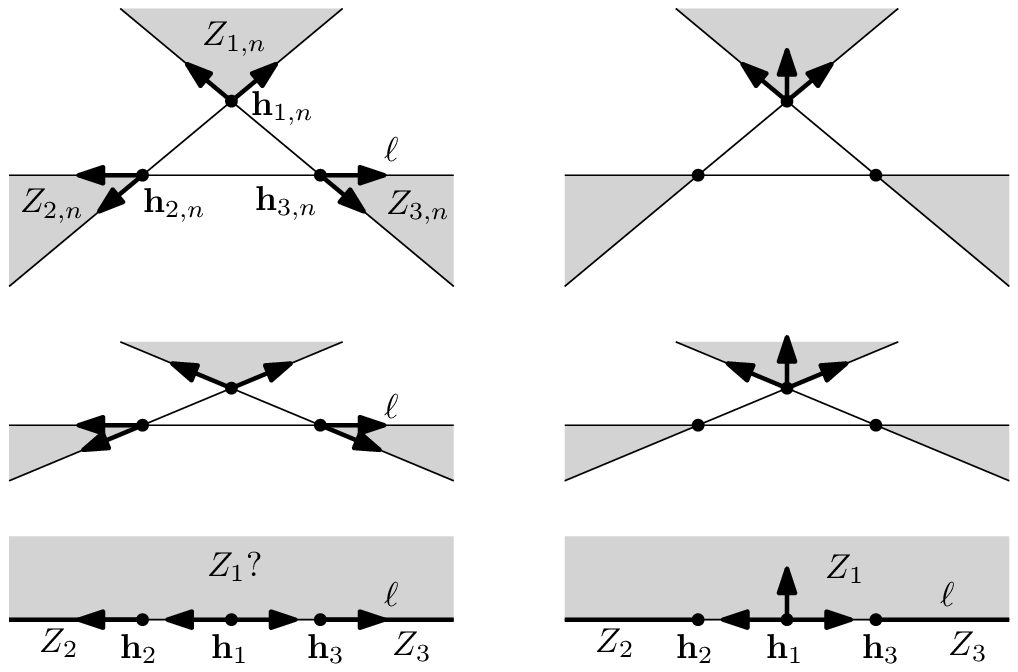}
  \caption{The vectors $\vv_{i,j,n}$ provide a useful information about the
  convex cone $Z_i$; however, they are not sufficient in general to determine
$Z_i$. Let us consider the case when $\hh_2 \mathrel{\mathop:}= \hh_{2,n}$ and $\hh_3 \mathrel{\mathop:}= \hh_{3,n}$ are
fixed points of a line $\ell$ and $\hh_{1,n}$ approaches the midpoint $\hh_1$ of
$\hh_2\hh_3$ from above; see the series of pictures on the left. The vectors
$\vv_{i,j,n}$ are drawn as small arrows without labels.
Then the (limit) vectors $\vv_{2,j}$ determine the (limit) cone $Z_2$, which is a ray in this case.
However, the two vectors $\vv_{1,j}$ (around $\hh_1$ on the bottom left
picture) are insufficient to determine the expected limit
cone $Z_1$. By using all the vectors $\uu_{1,F,n}$, we can determine the cone $Z_1$ as depicted on the right}
 \label{f:converge}
  \end{center}
\end{figure}

\begin{observation}\label{obs_ostre}
Suppose that $i\in F\subset K\subseteq [d+1]$. Then $\uu_{i,F,n}\cdot\uu_{i,K,n}>0$. That is, $\uu_{i,F,n}$ and $\uu_{i,K,n}$ form an angle smaller than $\pi/2$.
\end{observation}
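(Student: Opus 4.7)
The plan is to expand $\uu_{i,F,n}$ as an explicit positive linear combination of edge vectors of $\sigma_{F,n}$ at the vertex $\hh_{i,n}$, take its dot product with $\uu_{i,K,n}$, and then read off the sign using condition (b) applied to $\uu_{i,K,n}$. The crucial asymmetry is that conditions (a) and (b) defining $\uu_{i,F,n}$ and $\uu_{i,K,n}$ refer to two different faces, but since $F\subseteq K$, the half-space constraints from (b) for $\uu_{i,K,n}$ control \emph{more} directions than we actually need.

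Concretely, condition (a) for $F$ supplies a point $\qqq_F$ in the relative interior of $\sigma_{F,n}$ and a scalar $\mu>0$ with $\qqq_F=\hh_{i,n}-\mu\uu_{i,F,n}$. Writing $\qqq_F=\sum_{j\in F}\beta_j\hh_{j,n}$ with all $\beta_j>0$ and $\sum_{j\in F}\beta_j=1$, and noting that the $j=i$ contribution to $\qqq_F - \hh_{i,n}$ vanishes, one obtains the representation
\[
\uu_{i,F,n} \;=\; -\frac{1}{\mu}\sum_{j\in F\setminus\{i\}}\beta_j(\hh_{j,n}-\hh_{i,n}).
\]
Taking the inner product with $\uu_{i,K,n}$ gives
\[
\uu_{i,F,n}\cdot\uu_{i,K,n} \;=\; -\frac{1}{\mu}\sum_{j\in F\setminus\{i\}}\beta_j\bigl((\hh_{j,n}-\hh_{i,n})\cdot \uu_{i,K,n}\bigr).
\]
Now condition (b) applied to $\uu_{i,K,n}$ says $(\hh_{k,n}-\hh_{i,n})\cdot\uu_{i,K,n}<0$ for every $k\in K\setminus\{i\}$; since $F\setminus\{i\}\subseteq K\setminus\{i\}$, this applies in particular to every $j$ indexing the sum above. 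Each summand is therefore the positive quantity $\beta_j/\mu$ multiplied by a strictly negative inner product and preceded by a minus sign, hence is strictly positive. As $F\neq\{i\}$ by the definition of $\uu_{i,F,n}$, the index set $F\setminus\{i\}$ is nonempty, so the total sum is strictly positive.

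The only real obstacle is to pair the two defining conditions correctly: one must apply (a) to the \emph{smaller} set $F$ (to expand $\uu_{i,F,n}$ in terms of the directions $\hh_{j,n}-\hh_{i,n}$) and (b) to the \emph{larger} set $K$ (to sign the resulting inner products), which is precisely where the hypothesis $F\subseteq K$ enters. A more symmetric attempt — for instance projecting $\uu_{i,K,n}$ onto $\aff(\sigma_{F,n})$ and trying to control only its parallel component — would require additional information about the projections of the vertices $\hh_{k,n}$ with $k\in K\setminus F$ into $\aff(\sigma_{F,n})$, which the short argument above neatly bypasses.
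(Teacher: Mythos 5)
Your proof is correct and follows essentially the same route as the paper's: you express $\uu_{i,F,n}$ as a positive combination of the edge directions $\hh_{i,n}-\hh_{j,n}$ for $j\in F\setminus\{i\}$ via condition (a), and then apply condition (b) for $\uu_{i,K,n}$ (which covers all of $K\setminus\{i\}\supseteq F\setminus\{i\}$) to sign each term. The paper phrases it in terms of the unit vectors $\vv_{i,j,n}$, but the argument is the same; your write-up just makes the positivity of the barycentric coefficients and the nonemptiness of $F\setminus\{i\}$ more explicit.
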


\begin{proof} By condition a) for the vector $\uu_{i,F,n}$, the vector $\uu_{i,F,n}$ is a nonnegative linear combination of the vectors $\vv_{i,j,n}$ for $j\in F \setminus\{i\}$. By condition b) for the vector $\uu_{i,K,n}$, we have $\uu_{i,K,n}\cdot \vv_{i,j,n} >0$ for every $j\in K \setminus\{i\}$. The observation follows by combining these inequalities.
\end{proof}

An \emph{$i$-chain} is a sequence $\mathcal{F}=(F_0,F_1, F_2, \dots,F_d)$ of
nonempty subsets of $[d+1]$ such that $\{i\}=F_0\subset F_1\subset F_2 \subset
\cdots \subset F_d=[d+1]$. In particular, $\lvert F_j \rvert = j+1$ for every
$j\in [d]$. For every $n$, every $i$-chain $\mathcal{F}$ determines a
simplicial cone $C(\mathcal{F},n) \mathrel{\mathop:}=
C(\hh_{i,n},\uu_{i,F_1,n}, \uu_{i,F_2,n}, \dots, \uu_{i,F_d,n})$. It is easy to
see that for fixed $n$ and $i$, the cones $C(\mathcal{F},n)$ determined by all
$i$-chains $\mathcal{F}$ cover $Z_{i,n}$; we explain this in more detail below. In fact, their interiors are also pairwise disjoint so they form a finite \emph{tiling} of $Z_{i,n}$. 
It is therefore sufficient to prove the conclusion of
Claim~\ref{claim_limita_kuzelu}(2) for each sequence of cones $C(\mathcal{F},n_k)$ separately.

In order to show that the cones $C(\mathcal{F},n)$ cover $Z_{i,n}$, it is
sufficient to show that $C(\mathcal{F},n) \cap H$ cover $Z_{i,n} \cap H$ for
any hyperplane $H$ perpendicular to the vector $\uu_{i,[d+1],n}$ and such that
$\lvert Z_{i,n} \cap H\rvert > 1$. Then $\Delta \mathrel{\mathop:}= 
H \cap Z_{i,n}$ is a $(d-1)$-simplex meeting all the rays $\{\lambda \uu_{i,F,n}, \, \lambda \ge 0\}$, in points $\rr_F$ (considering $i$ and $n$ as fixed).
Similarly $\Delta(\F) \mathrel{\mathop:}= H \cap C(\F,n)$ is a $(d-1)$-simplex. Its vertices are
the points $\rr_F$ for $F$ belonging to $\F$. See Figure~\ref{f:tiling}. Each
$\rr_F$ is in the relative interior of some face $\Delta_F$ of $\Delta$. If we were
lucky and $\rr_F$ would coincide, for each $F\subseteq [d+1]$, $i \in F$, $F \neq
\{i\}$, with the barycenter $\bb_F$ of $\Delta_F$, then it is well known that the
simplices $\Delta(F)$ form the barycentric subdivision of $\Delta$ and
therefore they tile $\Delta$. If this is not the case, we consider the
piecewise-linear homeomorphism of $\Delta$, linear on each $\Delta(\F)$,
sending each $\rr_F$ to $\bb_F$. This again shows that $\Delta(\F)$ tile
$\Delta$ because a homeomorphism maps a tiling to a tiling.

\begin{figure}
  \begin{center}
  \includegraphics{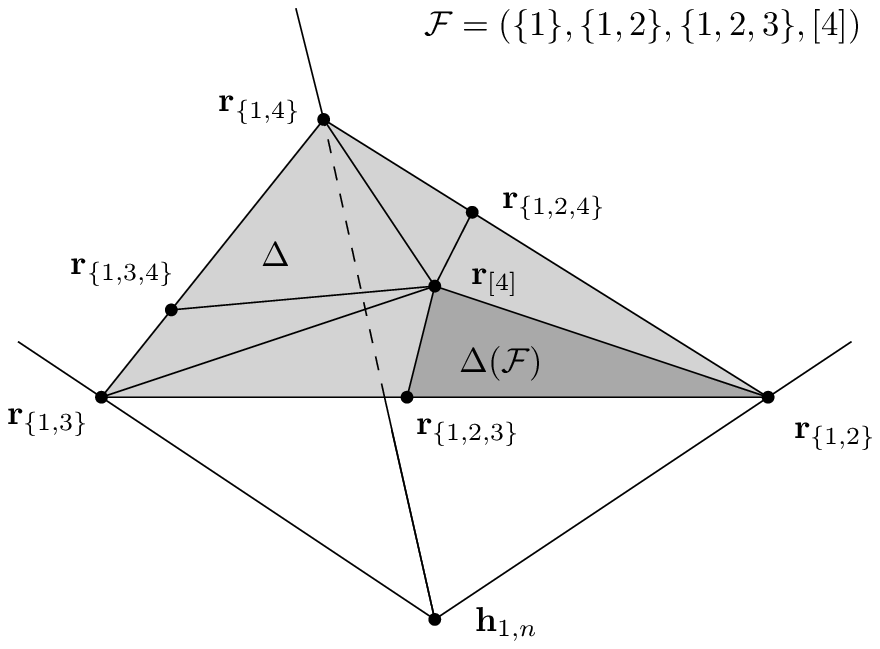}
  \caption{Tiling the cone}
  \label{f:tiling}
\end{center}
\end{figure}

A simplicial cone $\CC(\aa,\vv_1, \dots, \vv_d)$ is \emph{acute} if $\vv_i \cdot
\vv_j > 0$ for any $i, j \in [d]$. Observe that in an acute simplicial cone, every two (not necessarily extreme) rays form an acute angle.
Observation~\ref{obs_ostre} implies that every cone $C(\mathcal{F},n)$ is acute.

An \emph{admissible} vector of a cone $\CC \mathrel{\mathop:}=
\CC(\aa,\vv_1,\dots,\vv_d)$ is a unit vector $\vv$ such that $\aa + \vv \in
\CC$. That is, admissible vectors form an intersection of the unit sphere with $C-\aa$. 
Another equivalent definition is that $\vv$ is admissible if it can be written as 
\[
\vv = \frac{\lambda_1 \vv_1 + \cdots + \lambda_d \vv_d}{\lVert\lambda_1 \vv_1 + \cdots
+ \lambda_d \vv_d\rVert}
\]
where $\lambda_i \geq 0$ for $i \in [d]$ and at least one of these $\lambda_i$ is
strictly positive. Since this definition is not affected by multiplying
each $\lambda_i$ by a positive constant, we can further require that $\lambda_1 + \cdots +
\lambda_d = 1$.

Let $\CC^n \mathrel{\mathop:}= \CC(\aa^n,\vv_1^n, \dots, \vv_d^n)$ be a sequence of simplicial cones
such that the sequence $(\aa^n, \vv_1^n, \dots, \vv_d^n)$ converges to a point 
$(\aa, \vv_1, \dots, \vv_d)$. Then the cone $\CC \mathrel{\mathop:}= \CC(\aa,\allowbreak \vv_1, \dots,\allowbreak
\vv_d)$ is the \emph{limit} of $\CC^n$. Our aim is to show that the limit of acute cones
behaves nicely. Clearly, if the cones $\CC^n$ are acute, then $\vv_i\cdot \vv_j\ge 0$ for any $i,j\in[d]$.
 Claim~\ref{claim_limita_kuzelu}(2) now follows from the following claim,
 applied to every sequence $C(\mathcal{F},n_k)$.

\begin{claim}\label{claim_konvergence}
Let $\CC^n \mathrel{\mathop:}= \CC(\aa^n,\vv_1^n, \dots, \vv_d^n)$ be a sequence of acute 
simplicial cones with a limit $\CC \mathrel{\mathop:}= \CC(\aa,\vv_1, \dots, \vv_d)$.
Then for every $\varepsilon > 0$ there is an $n_0 \in \N$ such that for every $n
\geq n_0$ and every admissible vector $\vv^n$ of $\CC^n$, there is an admissible
vector $\vv$ of $\CC$ such that $\lVert \vv - \vv^n\rVert \leq \varepsilon$.

\end{claim}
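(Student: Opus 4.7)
The plan is to prove Claim~\ref{claim_konvergence} by a compactness argument whose central ingredient is a uniform lower bound on the denominator appearing in the normalized representation of admissible vectors; this lower bound is where acuteness is used.

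First, I would establish the following elementary estimate: for any nonnegative reals $\lambda_1, \dots, \lambda_d$ with $\sum_i \lambda_i = 1$ and any unit vectors $\ww_1, \dots, \ww_d$ in $\R^d$ such that $\ww_i \cdot \ww_j \ge 0$ for all $i, j$, one has
\[
\Bigl\lVert \sum_{i=1}^d \lambda_i \ww_i\Bigr\rVert^2 = \sum_i \lambda_i^2 + 2\sum_{i<j} \lambda_i \lambda_j (\ww_i \cdot \ww_j) \ge \sum_i \lambda_i^2 \ge \frac{1}{d}\Bigl(\sum_i \lambda_i\Bigr)^2 = \frac{1}{d}.
\]
Since each cone $\CC^n$ is acute, this bound applies to $(\vv_1^n, \dots, \vv_d^n)$; and by taking limits, $\vv_i \cdot \vv_j \ge 0$, so the bound also applies in $\CC$. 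Therefore every admissible vector of $\CC^n$ (and of $\CC$) is expressible as $\frac{\sum_i \lambda_i \vv_i^n}{\lVert \sum_i \lambda_i \vv_i^n\rVert}$ with $\lambda_i \ge 0$, $\sum_i \lambda_i = 1$, and the denominator is at least $1/\sqrt{d}$.

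Next, I would argue by contradiction. Suppose there is $\varepsilon > 0$, together with an infinite subsequence (reindexed as $n$) and admissible vectors $\vv^n$ of $\CC^n$, such that $\vv^n$ lies at distance greater than $\varepsilon$ from every admissible vector of $\CC$. Write $\vv^n = \frac{\sum_i \lambda_i^n \vv_i^n}{\lVert \sum_i \lambda_i^n \vv_i^n\rVert}$ with $\lambda_i^n \ge 0$ and $\sum_i \lambda_i^n = 1$. Since the coefficient tuples $(\lambda_1^n, \dots, \lambda_d^n)$ lie in the compact standard simplex, we may pass to a further subsequence so that $\lambda_i^n \to \lambda_i$ for some $\lambda_i \ge 0$ with $\sum_i \lambda_i = 1$.

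Finally, using $\vv_i^n \to \vv_i$ together with the uniform lower bound $\lVert \sum_i \lambda_i^n \vv_i^n\rVert \ge 1/\sqrt{d}$ (which guarantees continuity of the normalization), the sequence $\vv^n$ converges to $\vv \mathrel{\mathop:}= \frac{\sum_i \lambda_i \vv_i}{\lVert \sum_i \lambda_i \vv_i\rVert}$, and $\vv$ is by definition an admissible vector of $\CC$. This contradicts the assumption $\lVert \vv^n - \vv\rVert > \varepsilon$ for all $n$ in the subsequence. The only genuinely delicate point is the lower bound on the denominator: without acuteness the vectors $\vv_i$ could combine with opposite signs and cancel, but with pairwise non-obtuse angles this cannot happen, and the rest of the argument is routine compactness.
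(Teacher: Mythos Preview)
Your proof is correct and rests on the same key ingredient as the paper's: the uniform lower bound $\bigl\lVert \sum_i \lambda_i \ww_i \bigr\rVert \ge 1/\sqrt{d}$ for convex combinations of pairwise non-obtuse unit vectors, which guarantees that normalization is uniformly continuous on the relevant domain.

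The only difference is in the packaging. The paper argues directly rather than by contradiction: given $\vv^n$ with coefficients $(\lambda_1,\dots,\lambda_d)$, it takes $\vv$ to be the admissible vector of $\CC$ with the \emph{same} coefficients, and then bounds $\lVert \vv - \vv^n\rVert$ explicitly in terms of $\max_i \lVert \vv_i - \vv_i^n\rVert$, obtaining $\lVert \vv - \vv^n\rVert \le 2d^2 \max_i \lVert \vv_i - \vv_i^n\rVert$. This yields an explicit choice of $n_0$ (namely any $n_0$ with $\lVert \vv_i - \vv_i^n\rVert < \varepsilon/(2d^2)$ for all $n\ge n_0$). Your compactness argument trades this explicit bound for a shorter write-up; since no quantitative dependence on $n_0$ is needed downstream, either version is perfectly adequate.
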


\begin{proof}

  For a given $\varepsilon$ we set $\delta \mathrel{\mathop:}= 
  \varepsilon/(2d^2)$ and we
  choose such $n_0$ that for every $n\ge n_0$ and for every $i \in [d]$, we have $\lVert\vv_i - \vv_i^n\rVert<\delta$.

Let $\vv^n$ be an admissible vector of $\CC^n$ written as 
\[
\vv^n = \frac{\lambda_1 \vv^n_1 + \cdots + \lambda_d \vv^n_d}{\lVert\lambda_1 \vv^n_1 +
\cdots + \lambda_d \vv^n_d\rVert}
\]
with $\lambda_1 + \cdots + \lambda_d=1$. Then
\[
\vv \mathrel{\mathop:}= \frac{\lambda_1 \vv_1 + \cdots + \lambda_d \vv_d}{\lVert\lambda_1 \vv_1 + \cdots
+ \lambda_d \vv_d\rVert}
\]
is an admissible vector of $\CC$.
Our aim is to show that $\lVert\vv - \vv^n\rVert$ is small.
Let $\xx \mathrel{\mathop:}= \lambda_1 \vv_1 + \cdots + \lambda_d \vv_d$ and 
$\xx_n \mathrel{\mathop:}= \lambda_1 \vv^n_1 + \cdots + \lambda_d \vv^n_d$.

By the triangle inequality, we have 
\[
\Bigl\lvert  \lVert\xx\rVert - \lVert\xx^n\rVert  \Bigr\rvert   \le   \lVert\xx - \xx^n\rVert
\le \lambda_1 \lVert\vv_1 - \vv^n_1\rVert + \cdots + \lambda_d\lVert\vv_d - \vv^n_d\rVert \le d\delta.
\]

Clearly, $\lVert\xx\rVert \le 1$, since $\lVert\xx\rVert$ is a convex combination of unit vectors.
We further prove that $\lVert\xx\rVert \ge 1/\sqrt{d}$. Analogously, we also get the inequality $\lVert\xx^n\rVert \ge 1/\sqrt{d}$. We have
\[
\lVert\xx\rVert^2 = \xx \cdot \xx = \lambda_1^2 + \cdots + \lambda_d^2 + \sum\limits_{1
\leq i < j \leq d} 2 \lambda_i \lambda_j (\vv_i \cdot \vv_j) \geq \lambda_1^2 +
\cdots + \lambda_d^2 \geq \frac{(\lambda_1 + \cdots + \lambda_d)^2}d = \frac{1}{d},
\]
using the observation that $\vv_i\cdot \vv_j\ge 0$ and the inequality of
arithmetic and quadratic means.

Finally, 

\begin{align*}
\lVert\vv - \vv^n\rVert &= \left\lVert \frac{\xx}{\lVert\xx\rVert} - \frac{\xx^n}{\lVert\xx^n\rVert} \right\rVert
\le  d \cdot \Bigl\lVert         \lVert\xx^n\rVert \cdot \xx  -  \lVert\xx\rVert \cdot \xx^n  \Bigr\rVert \\
&\le d \cdot \left( \Bigl\lVert  \lVert\xx^n\rVert \cdot \xx  -  \lVert\xx\rVert \cdot \xx    \Bigr\rVert + 
                    \Bigl\lVert  \lVert\xx\rVert \cdot \xx    -  \lVert\xx\rVert \cdot \xx^n  \Bigr\rVert \right) \\
&= d \cdot \left( \Bigl\lvert \lVert\xx\rVert  -  \lVert\xx^n\rVert \Bigr\rvert  \cdot  \lVert\xx\rVert 
                 +  \lVert\xx\rVert   \cdot  \lVert\xx - \xx^n\rVert  \right) \\
 &\le 2d^2 \delta = \varepsilon.
\end{align*}

\end{proof}

\subsection{Final remark}
We note that any future improvement of Theorem~\ref{t:bounds}(2) yields a corresponding improvement of Theorem~\ref{t:pach_continuous}. To see this, we have to modify the proof of Theorem~\ref{t:pach_continuous} a little bit, since we cannot rely on the proof of Theorem~\ref{t:bounds}(2) to obtain the arrangement $\H_n$ satisfying all the required conditions. 
Instead, we use Lemmas~\ref{l:shrinkY} and~\ref{lemma_generic_strict_separation}. Also, when choosing the sequence of measures $\mu''_{i,n}$, we require, in addition, that $\supp(\mu''_{1,n}) \cup \dots \cup \supp(\mu''_{d+1,n})$ satisfies condition (G) and that $|\supp(\mu''_{i,n})|\ge n$, which will compensate for the loss of some points after applying Lemma~\ref{l:shrinkY}.

\section*{Acknowledgement}
We thank Erik Aas for participating at the initial stage of this project.
We also thank 
Karim Adiprasito for fruitful discussions about the minimum solid angle in a
simplex and 
Andrew Suk for a brief discussion about the
expected improved lower bound by Fox, Pach, and Suk.

R.~K. was supported by the Russian Foundation for Basic Research grant
15-31-20403 (mol\_a\_ved).
J.~K., Z.~P. and M.~T. were partially supported by ERC Advanced Research Grant
no 267165 (DISCONV) and by the project CE-ITI (GA\v{C}R P202/12/G061) of the
Czech Science Foundation.
J.~K. was also partially supported by Swiss National Science Foundation Grants 200021-137574 and 200020-14453.
P.~P., Z.~P. and M.~T. were partially supported by the Charles University Grant GAUK 421511.
P.~P. was also partially supported by the Charles University Grant SVV-2014-260107.
Z.~P. was also partially supported by the Charles University Grant SVV-2014-260103.
Part of this work was done when M.~T. was affiliated with Institutionen f\"{o}r matematik, Kungliga Tekniska
H\"{o}gskolan, 100~44 Stockholm.

\bibliographystyle{alpha}
\bibliography{pachbib}

\newcommand{\etalchar}[1]{$^{#1}$}
\begin{thebibliography}{FGL{\etalchar{+}}12}

\bibitem[AK15]{akopyan-karasev15arxiv}
Arseniy Akopyan and Roman Karasev.
\newblock Bounding minimal solid angles of polytopes.
\newblock Preprint;
  \href{http://arxiv.org/abs/1505.05263v1}{arXiv:1505.05263v1}, 2015.

\bibitem[Ale43]{Alexandroff43}
A.~D. Alexandroff.
\newblock Additive set-functions in abstract spaces.
\newblock {\em Rec. Math. [Mat. Sbornik] N.S.}, 13(55):169--238, 1943.

\bibitem[{B}{\'{a}}r82]{barany82}
{I}. {B}{\'{a}}r\'{a}ny.
\newblock {A} generalization of {C}arath\'eodory's theorem.
\newblock {\em {D}iscrete {M}ath.}, 40:141--152, 1982.

\bibitem[BF84]{boros-furedi84}
E.~Boros and Z.~F{\"u}redi.
\newblock The number of triangles covering the center of an {$n$}-set.
\newblock {\em Geom. Dedicata}, 17(1):69--77, 1984.

\bibitem[Bog07]{bogachev07_II}
V.~I. Bogachev.
\newblock {\em Measure theory, {V}ol. {II}}.
\newblock Springer, Berlin, 2007.

\bibitem[BP14]{barany-pach14}
I.~B{\'a}r{\'a}ny and J.~Pach.
\newblock Homogeneous selections from hyperplanes.
\newblock {\em J. Combin. Theory Ser. B}, 104:81--87, 2014.

\bibitem[FGL{\etalchar{+}}12]{fox-gromov-lafforgue-naor-pach12}
J.~Fox, M.~Gromov, V.~Lafforgue, A.~Naor, and J.~Pach.
\newblock Overlap properties of geometric expanders.
\newblock {\em J. Reine Angew. Math.}, 671:49--83, 2012.

\bibitem[FPS15]{FPS15_semialgebraic}
Jacob Fox, J\'{a}nos Pach, and Andrew Suk.
\newblock A polynomial regularity lemma for semi-algebraic hypergraphs and its
  applications in geometry and property testing.
\newblock Preprint;
  \href{http://arxiv.org/abs/1502.01730v1}{arXiv:1502.01730v1}, 2015.

\bibitem[GHS02]{gao-hug-schneider03}
Fuchang Gao, Daniel Hug, and Rolf Schneider.
\newblock Intrinsic volumes and polar sets in spherical space.
\newblock {\em Math. Notae}, 41:159--176 (2003), 2001/02.

\bibitem[Gro10]{gromov10}
M.~Gromov.
\newblock Singularities, expanders and topology of maps. {P}art 2: {F}rom
  combinatorics to topology via algebraic isoperimetry.
\newblock {\em Geom. Funct. Anal.}, 20(2):416--526, 2010.

\bibitem[Jia14]{Jia14_slight}
Zilin Jiang.
\newblock A slight improvement to the colored {B}\'ar\'any's theorem.
\newblock {\em Electron. J. Combin.}, 21(4):Paper P4.39, 8pp., 2014.

\bibitem[Kal13]{kallus13_MO}
Y.~Kallus.
\newblock Which values can attain the minimum solid angle in a simplex.
\newblock MathOverflow,
  \url{http://mathoverflow.net/questions/136313#comment351368_136313}, 2013.
\newblock Comment on a question of {M}. {T}ancer.

\bibitem[Kar12]{karasev12}
R.~Karasev.
\newblock A simpler proof of the
  {B}oros-{F}\"uredi-{B}\'ar\'any-{P}ach-{G}romov theorem.
\newblock {\em Discrete Comput. Geom.}, 47(3):492--495, 2012.

\bibitem[KMS12]{kral-mach-sereni12}
D.~Kr{\'a}{\soft{l}}, L.~Mach, and J.-S. Sereni.
\newblock A new lower bound based on {G}romov's method of selecting heavily
  covered points.
\newblock {\em Discrete Comput. Geom.}, 48(2):487--498, 2012.

\bibitem[KPST14]{kyncl-patak-safernova-tancer14arxiv-v2}
Jan Kyn\v{c}l, Pavel Pat\'{a}k, Zuzana Safernov\'{a}, and Martin Tancer.
\newblock Bounds for {P}ach's selection theorem and for the minimum solid angle
  in a simplex.
\newblock Preprint; \href{http://arxiv.org/abs/1403.8147v2}{arXiv:1403.8147v2},
  2014.

\bibitem[{M}at02]{matousek02}
J.~{M}atou\v{s}ek.
\newblock {\em Lectures on Discrete Geometry}.
\newblock {S}pringer-{V}erlag {N}ew {Y}ork, Inc., 2002.

\bibitem[Mat03]{Matousek03}
J.~Matou\v{s}ek.
\newblock {\em Using the {B}orsuk-{U}lam Theorem}.
\newblock {S}pringer, {B}erlin, 2003.

\bibitem[MW14]{matousek-wagner14}
Ji{\v{r}}{\'{\i}} Matou{\v{s}}ek and Uli Wagner.
\newblock On {G}romov's method of selecting heavily covered points.
\newblock {\em Discrete Comput. Geom.}, 52(1):1--33, 2014.

\bibitem[O'R11]{Rourke11_MO}
J.~O'Rourke.
\newblock Angle of a regular simplex.
\newblock MathOverflow, \url{http://mathoverflow.net/a/53881/24076}, 2011.
\newblock Answer to a question of {B}. {B}ukh.

\bibitem[{P}ac98]{pach98}
{J}. {P}ach.
\newblock {A} {T}verberg-type result on multicolored simplices.
\newblock {\em {C}omput. {G}eom.}, 10:71--76, 1998.

\bibitem[R{\"{a}}c06]{racke06}
H.~R{\"{a}}cke.
\newblock Measure concentration for the sphere.
\newblock Lecture notes,
  \url{http://www.dcs.warwick.ac.uk/~harry/teaching/pdf/lecture13.pdf}, 2006.

\bibitem[Rog61]{Rogers61_asymptotic}
C.~A. Rogers.
\newblock An asymptotic expansion for certain {S}chl\"afli functions.
\newblock {\em J. London Math. Soc.}, 36:78--80, 1961.

\bibitem[Tao10]{Tao10_epsilon_I}
T.~Tao.
\newblock {\em An epsilon of room, {I}: real analysis}, volume 117 of {\em
  Graduate Studies in Mathematics}.
\newblock American Mathematical Society, Providence, RI, 2010.
\newblock Pages from year three of a mathematical blog.

\bibitem[Zon99]{Zong99_Sphere}
C.~Zong.
\newblock {\em Sphere packings}.
\newblock Universitext. Springer-Verlag, New York, 1999.

\end{thebibliography}

\end{document}